\newtheorem{thm}{Theorem}[section]
\newtheorem{lemma}[thm]{Lemma}
\newtheorem{theorem}[thm]{Theorem}
\newtheorem{proposition}[thm]{Proposition}
\newtheorem{corollary}[thm]{Corollary}
\newtheorem{notation}[thm]{Notation}
\newtheorem{definition}[thm]{Definition}
\newtheorem{remark}[thm]{Remark}
\newtheorem*{thmintro}{Theorem \ref{thm:Neumann product}}
\newtheorem*{thmintro2}{Theorems \ref{thm:int} and \ref{thm:rank int}}
\newtheorem*{corintro}{Corollary \ref{cor:SHNC}}
\newcommand{\RR}{\mathbb R}
\newcommand{\RRR}{\mathbb {R}_{\geq 0}}
\newcommand{\R}{\mathcal R}
\newcommand{\N}{\mathcal N}
\newcommand{\C}{\mathfrak C}
\newcommand{\gd}{\delta}
\newcommand{\gD}{\Delta}
\newcommand{\gG}{\Gamma}
\newcommand{\id}{\mathrm{id}}
\newcommand{\SCyl}{\mathcal{S} \mathrm{Cyl}}
\newcommand{\SCurr}{\mathcal{S} \mathrm{Curr}}
\newcommand{\rk}{\overline{\mathrm{rk}}}
\newcommand{\tn}[1]{\textnormal{#1}}
\newcommand{\ti}[1]{\textit{#1}}
\newcommand{\Sub}{\mathrm{Sub}}
\newcommand{\SSub}{\widehat{\mathrm{Sub}}}
\renewcommand{\:}{\colon}
\renewcommand{\ne}{\not= \emptyset}
\renewcommand{\tilde}{\widetilde}
\begin{document}

\title[An intersection functional on the space of subset currents]{An intersection functional on the space of subset currents on a free group}

\author[D. Sasaki]{Dounnu Sasaki}

\address{Faculty~of~Science~and~Engineering, Waseda~University, Okubo~3-4-1, Shinjuku, Tokyo~169-8555, Japan}

\email{dounnu-daigaku@moegi.waseda.jp}

\subjclass[2010]{Primary 20F65, Secondary 20E05.}

\keywords{free group; subset current; geodesic current; Strengthened Hanna Neumann Conjecture; reduced rank.}

\begin{abstract}
Kapovich and Nagnibeda introduced the space $\mathcal{S} {\rm Curr}(F_N)$ of subset currents on a free group $F_N$ of rank $N\geq 2$, which can be thought of as a measure-theoretic completion of the set of all conjugacy classes of finitely generated subgroups of $F_N$. We define a product $\mathcal{N} (H,K)$ of two finitely generated subgroups $H$ and $K$ of $F_N$ by the sum of the reduced rank $\overline{\rm {rk}}(H\cap gKg^{-1})$ over all double cosets $HgK\ (g\in F_N)$, and extend the product $\mathcal{N}$ to a continuous symmetric $\mathbb{R}_{\geq 0}$-bilinear functional $\mathcal{N} \colon \mathcal{S} \textnormal{Curr} (F_N)\times \mathcal{S} \textnormal{Curr} (F_N)\to \mathbb {R}_{\geq 0}$. We also give an answer to a question presented by Kapovich and Nagnibeda.
The definition of $\mathcal{N}$ originates in the Strengthened Hanna Neumann Conjecture, which has been proven independently by Friedman and Mineyev, and can be stated as follows: for any finitely generated subgroups $H$ and $K$ of $F_N$ the inequality $\mathcal{N} (H,K)\leq \overline{\rm rk} (H) \overline{\rm rk} (K)$ holds. As a corollary to our theorem, this inequality is generalized to the inequality for subset currents.
\end{abstract}
\maketitle

\tableofcontents

\section{Introduction}
In \cite{KN13} Kapovich and Nagnibeda introduced the space $\SCurr (F_N)$ of \ti{subset currents} on a free group $F_N$ of rank $N\geq 2$ as an analogy of the space of geodesic currents on $F_N$ and on hyperbolic surfaces.
Geodesic currents on a hyperbolic surface, which were introduced by Bonahon \cite{Bon86,Bon88}, have been used successfully in the study of mapping class group and Teichm\"uller space of hyperbolic surfaces. 
Especially, the (geometric) intersection number, which is an $\RRR$-bilinear continuous functional on the space of geodesic currents, has played an essential role in the study.

Bonahon also introduced the notion of geodesic currents on hyperbolic groups in \cite{Bon88b}. 
Formally, a \ti{geodesic current} on a hyperbolic group $G$ is a Borel measure on $\partial ^2G:=\{(x,y)\in \partial G\ |\ x\not=y \}$  that is locally finite (i.e., finite on compact subsets), $G$-invariant and invariant with respect to the ``flip map'' $\partial ^2G \to \partial ^2G ; (x,y)\mapsto (y,x)$. 
Equivalently, a geodesic current on $G$ is a locally finite $G$-invariant Borel measure on the space of $2$-element subsets of $\partial G$.
The space of geodesic currents on $G$ can be thought of as a measure-theoretic completion of the set of all conjugacy classes in $G$, which correspond to free homotopy classes of closed curves in a hyperbolic surface when $G$ is the fundamental group of the surface.

Geodesic currents on $F_N$ were first studied by Reiner Martin in his 1995 PhD thesis \cite{Mar95}. They have been used analogically in the study of the outer automorphism group and Outer space of $F_N$, which was introduced by Culler and Vogtmann \cite{CV86} as a free-group analogy of Teichm\"uller space. See \cite{Kap05,Kap06,KN07} for some fundamental results of geodesic currents on free groups.
Recently,  geodesic currents on $F_N$ were used to describe the boundary of the complex of free factors for $F_N$ in \cite{BR12,Ham12}
and also used to investigate the Poisson boundary of $\tn{Out}(F_N)$ in \cite{Hor14}. A key tool for these works is the geometric intersection form between the closure of the unprojectivized Outer space and the space of geodesic currents on $F_N$ (see \cite{KL09}), which is an analogy of the intersection number on the space of geodesic currents on a hyperbolic surface.
The complex of free factors for $F_N$ is an analogue for the complex of curves for a hyperbolic surface and has been proven to be a Gromov hyperbolic space in \cite{BF14}. 

Kapovich and Nagnibeda \cite{KN13} generalized the notion of geodesic currents and introduced subset currents on $F_N$, and, more generally, on hyperbolic groups. In \cite{KN13} much theory of geodesic currents on $F_N$ was extended to the theory of subset currents on $F_N$, such as the co-volume form between Outer space and the space of subset currents on $F_N$, which is a generalization of the geometric intersection form. A relation between theory of invariant random subgroups and subset currents on $F_N$ is also indicated in the introduction in \cite{KN13}.

\smallskip

The purpose of this paper is to construct a ``natural'' intersection functional $\N$ on the space $\SCurr(F_N)$ of subset currents on $F_N$.

A \ti{subset current} on $F_N$ is a positive $F_N$-invariant locally finite Borel measure on the space $\C_N$ of all closed subsets of the hyperbolic boundary $\partial F_N$ consisting of at least two points, where we endow $\C_N$ with the subspace topology of the Vietoris topology on the hyperspace of $\partial F_N$. 
The space $\SCurr (F_N)$ of subset currents on $F_N$ is equipped with the weak-* topology and with the $\RRR$-linear structure.
By the definition, the space of geodesic currents on $F_N$ is canonically embedded in $\SCurr (F_N)$ as a closed subspace.
Kapovich and Nagnibeda defined the \ti{counting subset current} $\eta_H\in \SCurr (F_N)$ for a nontrivial finitely generated subgroup $H\leq F_N$ and proved that
\[ \{ c \eta_H \ |\ c\geq 0, H\ \tn{is a nontirivial finitely generated subgroup of }F_N \}\]
is a dense subset of $\SCurr (F_N)$. As a matter of convenience, for the trivial subgroup $H=\{ \id \}$ of $F_N$ we set $\eta_H=0\in \SCurr (F_N)$. 
Counting subset currents have the following properties.
For a finitely generated subgroup $H\leq F_N$ and a finite index subgroup $H'$ of $H$, we have $\eta_{H'}=[H:H']\eta_{H}$. If two finitely generated subgroups $H,H'\leq F_N$ are conjugate, then $\eta_{H}=\eta_{H'}$.
From the above properties of counting subset currents, we say that $\SCurr (F_N)$ can be thought of as a measure-theoretic completion of the set of all conjugacy classes of finitely generated subgroups of $F_N$. 

The action of the automorphism group $\operatorname{Aut}(F_N)$ of $F_N$ on $\partial F_N$ induces a continuous and $\RRR$-linear action on $\SCurr (F_N)$, and for $\varphi \in \tn{Aut}(F_N)$ and a finitely generated subgroup $H\leq F_N$ we have $\varphi \eta_{H}=\eta_{\varphi (H)}$. Since subset currents are $F_N$-invariant, the action of $\tn{Aut}(F_N)$ factors through the action of the outer automorphism group $\tn{Out}(F_N)$ on $\SCurr (F_N)$.

For a finitely generated free group $F$, the \ti{reduced rank} $\rk (F)$ is defined as
\[ \rk (F) :=\tn{max}\{\tn {rank}(F)-1,0\} ,\]
where $\tn{rank}(F)$ is the cardinality of a free basis of $F$. Let $\gD$ be a finite connected graph whose fundamental group is isomorphic to $F$. Then we have 
\[ \rk (F)=\tn{max} \{ -\chi (\gD ) ,0\} ,\]
where $\chi (\gD )$ is the Euler characteristic of $\gD $.
Note that for a finite index subgroup $H\leq F$ we can obtain an equation 
\[ \rk (H) = [F:H]\, \rk (F) ,\]
which follows from a covering space argument.
Kapovich and Nagnibeda \cite{KN13} proved that there exists a unique continuous $\RRR$-linear functional
 \[ \rk \: \SCurr (F_N)\to \RRR \]
such that for every finitely generated subgroup $H\leq F_N$ we have
\[ \rk (\eta _H)=\rk (H).\]
Moreover, $\rk$ is $\tn{Out}(F_N)$-invariant.
The map $\rk $ is called the \ti{reduced rank functional}.

Define a product $\N (H,K)$ of two finitely generated subgroups $H,K\leq F_N $ as
\[ \N (H,K) :=\sum_{HgK\in H\backslash F_N /K } \rk (H\cap gKg^{-1}),\]
where $H\backslash F_N/K$ is the set of all double cosets $HgK\ (g\in F_N)$. Note that $H\cap gKg^{-1}\not=\{ \id \}$ for only finitely many double cosets $HgK$. This definition originates in the Strengthened Hanna Neumann Conjecture, which has been proven independently by Friedman \cite{Fri11} and Mineyev \cite{Min12a}. By using the product $\N$ it can be stated as follows:
\[ \N (H,K)\leq \rk (H) \rk (K) \]
is satisfied for any finitely generated subgroups $H,K\leq F_N$.
The product $\N(H,K)$ is closely related to a \ti{fiber product graph} corresponding to $H$ and $K$, that is, each non-zero term of the sum in $\N(H,K)$ is corresponding to a non-contractible connected component of the fiber product graph (see Section \ref{sec:product}). Using the description by the fiber product graph, we can easily see that $\N$ has the following property: if $H'$ and $K'$ are finite index subgroups of $H$ and $K$ respectively, then we have
\[ \N (H', K')=[H:H'][K:K']\N (H,K).\]
Therefore it is natural to ask whether $\N$ extends to a continuous $\RRR$-bilinear functional on $\SCurr (F_N)$.

From the $\RRR$-linearity of the reduced rank functional, we have
\begin{align*}
\N (H,K)&=\sum_{HgK\in H\backslash F_N /K } \rk (H\cap gKg^{-1}) \\
	&=\rk \left( \sum_{HgK\in H\backslash F_N/K} \eta_{H\cap gKg^{-1}}\right).
\end{align*}
Kapovich and Nagnibeda \cite{KN13} asked whether there exists a continuous $\RRR$-bilinear map
\[ \pitchfork \: \SCurr (F_N)\times \SCurr (F_N)\to \SCurr (F_N)\]
such that for any finitely generated subgroups $H,K\leq F_N$ we have
\[ \pitchfork (\eta _H, \eta _K )=\sum_{HgK\in H\backslash F_N/K} \eta_{H\cap gKg^{-1}}.\]
If such a map $\pitchfork$ exists, then we immediately see that the product $\N$ is extended to a continuous $\RRR$-bilinear functional $\N \:\SCurr (F_N)\times \SCurr (F_N)\to \RRR$. Moreover, the map $\pitchfork$ can be considered as a measure theoretical generalization of the construction of the fiber product graph.
However, we prove that the map $\pitchfork$ can not be continuous due to the requirements on $\pitchfork$ (see Proposition \ref{prop:not conti}).
Nevertheless, we can establish the following theorem by a different approach.
\begin{thmintro}
There exists a unique continuous symmetric $\RRR$-bilinear functional
\[ \N \: \SCurr (F_N)\times \SCurr (F_N)\rightarrow \RRR\]
such that for any finitely generated subgroups $H,K\leq F_N$ we have
\[ \N (\eta_H ,\eta_K )= \N (H,K).\]
Moreover, $\N$ is $\tn{Out}(F_N)$-invariant.
\end{thmintro}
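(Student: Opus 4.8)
The plan is to build $\N$ in two steps, replacing the discontinuous current-valued map $\pitchfork$ by a continuous $\RRR$-bilinear \emph{intersection} $\I$ with values in a suitably enlarged space of measures, and then by applying a continuous $\RRR$-linear reduced-rank-type functional on that enlarged space; these two ingredients are what I would formulate as Theorems~\ref{thm:int} and~\ref{thm:rank int}. Fix a free basis of $F_N$ and let $T$ be the corresponding Cayley tree, so $\partial F_N=\partial T$ and a closed subset $S\subseteq\partial F_N$ with $|S|\ge 2$ is encoded by the subtree $\mathrm{hull}(S)\subseteq T$; recall Kapovich--Nagnibeda's description of a subset current $\mu$ by its values on the cylinder sets indexed by finite subtrees of $T$, and the resulting cylinder formula for $\rk$. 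In this language, for nontrivial finitely generated $H\le F_N$ with $T_H=\mathrm{hull}(\Lambda(H))$ and core graph $\Delta_H=T_H/H$ one has $\rk(\eta_H)=-\chi(\Delta_H)$, and
\[
\N(H,K)=\sum_{HgK}\max\bigl(-\chi\bigl((T_H\cap gT_K)/(H\cap gKg^{-1})\bigr),\,0\bigr),
\]
the sum being over the finitely many double cosets with $T_H\cap gT_K\neq\emptyset$; equivalently $\N(H,K)=\sum_C\max(-\chi(C),0)$ over the connected components $C$ of the fiber product graph $\Delta_H\times_{R_N}\Delta_K$. The subtlety hidden in the ``$\max$'' is that a contractible component contributes $0$ although $-\chi(C)=-1$, and --- as Proposition~\ref{prop:not conti} reflects --- it is precisely the appearance and disappearance of such contractible (and rank-one) components that makes $\pitchfork$ discontinuous.

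\textbf{Step 1: the intersection.} The product measure $\mu\times\nu$ on pairs $(S_1,S_2)$ of closed subsets is $F_N\times F_N$-invariant. I would restrict it to the $F_N$-invariant (diagonal) locus of pairs whose hulls share an edge, remembering that shared edge; since $F_N$ acts freely on the edges of $T$, the diagonal action on this decorated locus is free with a Borel transversal, and integrating $\mu\times\nu$ over a transversal yields a locally finite $F_N$-invariant measure $\I(\mu,\nu)$ in an enlarged space $\widehat{\SCurr}(F_N)$ of edge-decorated subset-current-type measures. Crucially this enlargement retains the data of a shared edge even when $\mathrm{hull}(S_1)\cap\mathrm{hull}(S_2)$ has fewer than two ends, so no mass is lost under small perturbations, unlike for $\pitchfork$. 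On each cylinder of $\widehat{\SCurr}(F_N)$ the measure $\I(\mu,\nu)$ equals a finite sum of products $\mu(\mathrm{Cyl}(\Delta_1))\,\nu(\mathrm{Cyl}(\Delta_2))$ over the finitely many matching pairs of finite subtrees; hence $\I$ is well defined, $\RRR$-bilinear, symmetric and weak-$*$ continuous. For counting currents, $\I(\eta_H,\eta_K)$ is the edge-decorated current of the fiber product graph $\Delta_H\times_{R_N}\Delta_K$, with every component weighted $1$.

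\textbf{Step 2: reduced rank of the intersection.} On $\widehat{\SCurr}(F_N)$ I want a continuous $\RRR$-linear functional $\rk_{\I}$ that, on the edge-decorated current of a finite folded graph $\Delta$, returns $\sum_C\max(-\chi(C),0)$. The naive ``edges minus vertices'' count is a cylinder formula --- once the decoration records local valences, the vertex count is $\sum_{\text{edges}}\sum_{v}\tfrac1{\deg v}$ --- but it produces $-\chi(\Delta)$ rather than the truncated sum, overcounting each contractible component by $-1$; correcting this is the delicate point, since ``lying in a contractible component'' is not a cylinder condition. The way out is to realize $\rk_{\I}$ as a monotone limit of cylinder functionals --- the $n$-th one using radius-$n$ decorations to detect the components that are already visibly contractible --- and to prove the limit is attained, uniformly on the relevant sets, using the Strengthened Hanna Neumann inequality $\N(H,K)\le\rk(H)\rk(K)$ together with the continuity of $\I$ to control the tails. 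Granting this, set $\N(\mu,\nu):=\rk_{\I}(\I(\mu,\nu))$; it is continuous, $\RRR$-bilinear and symmetric, and for finitely generated $H,K$ one gets $\N(\eta_H,\eta_K)=\rk_{\I}(\I(\eta_H,\eta_K))=\sum_C\max(-\chi(C),0)=\N(H,K)$, using the standard dictionary between core graphs, limit sets and double cosets together with the identity $\Lambda(H)\cap g\Lambda(K)=\Lambda(H\cap gKg^{-1})$ for finitely generated subgroups of a free group.

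\textbf{Step 3: uniqueness and $\tn{Out}(F_N)$-invariance.} If $\N'$ is another continuous symmetric $\RRR$-bilinear functional with $\N'(\eta_H,\eta_K)=\N(H,K)$, then for each fixed finitely generated $K$ the continuous linear functionals $\N(\cdot,\eta_K)$ and $\N'(\cdot,\eta_K)$ agree on every $c\eta_H$, hence on the dense $\RRR$-span of $\{c\eta_H\}$, hence on all of $\SCurr(F_N)$; fixing then an arbitrary first argument and repeating in the second variable gives $\N=\N'$. For $\varphi\in\tn{Aut}(F_N)$, reindexing double cosets by $g\mapsto\varphi^{-1}(g)$ shows $\N(\varphi(H),\varphi(K))=\N(H,K)$ for all finitely generated $H,K$, so $(\mu,\nu)\mapsto\N(\varphi^{-1}\mu,\varphi^{-1}\nu)$ is again a continuous symmetric $\RRR$-bilinear functional matching $\N$ on counting currents and therefore equal to $\N$; since currents are $F_N$-invariant, inner automorphisms act trivially and the action factors through $\tn{Out}(F_N)$. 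The main obstacle is Step~2 --- producing a genuinely continuous linear functional on the enlarged space that correctly truncates the contractible components of the fiber product, i.e.\ making precise that passing to reduced rank destroys exactly the part of $\pitchfork(\eta_{H_n},\eta_K)$ responsible for its discontinuity --- with Step~1 (choosing the enlargement so that the diagonal action is tame and $\I$ is truly weak-$*$ continuous) being the other place where care is needed.
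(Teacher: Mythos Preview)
Your diagnosis is right: $\N(H,K)=-\chi(\gD_H\times_{R_N}\gD_K)+c(\gD_H\times_{R_N}\gD_K)$, the two pieces of $-\chi$ extend to the easy bilinear cylinder functionals $\widehat E,\widehat V$, and the entire difficulty is making the contractible-component count $\widehat c$ continuous. Step~3 is fine and matches the paper. But Step~2, where the real work lives, is a gap as written. You propose to obtain the tail control needed for continuity of your $\rk_\I$ from the SHNC inequality $\N(H,K)\le\rk(H)\rk(K)$; this does not give what you need. SHNC bounds the \emph{already-corrected} quantity $\N$, whereas what must be controlled uniformly along a convergent sequence is the contribution of large contractible components---and $c(\gD_H\times_{R_N}\gD_K)$ can be as large as $\#V(\gD_H)\cdot\#V(\gD_K)$, with no a priori relation to $\rk(H)\rk(K)$. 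A monotone limit of continuous cylinder functionals is not automatically continuous without exactly the uniform tail estimate you are trying to manufacture, so the appeal to SHNC is circular in effect even if not in logic.

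The paper's argument avoids both the enlarged target space and SHNC. It works directly with $\mu\times\nu$ on $\C_N\times\C_N$: for each finite based subtree $T\in\SSub(X,\id)$ set $\R(T)=\{(S_1,S_2):\mathrm{Conv}(S_1)\cap\mathrm{Conv}(S_2)=T\}$; then $\widehat c(\mu,\nu)=\sum_{[T]}\mu\times\nu(\R(T))$ over $F_N$-orbits, and each summand is a finite combination of products $\mu(\SCyl(T_1))\nu(\SCyl(T_2))$. Continuity comes from one elementary counting trick: the sets $\R(T)$ over \emph{all} based $T\ni\id$ (not just orbit representatives) are pairwise disjoint and contained in $\{(S_1,S_2):\id\in\mathrm{Conv}(S_i)\ (i=1,2)\}$, and each orbit $[T]$ has exactly $\#V(T)$ representatives through $\id$, so
\[
\sum_{[T]}\#V(T)\,\mu\times\nu(\R(T))=\sum_{T\ni\id}\mu\times\nu(\R(T))\le V(\mu)V(\nu).
\]
Hence the tail over $\#V(T)\ge L$ is at most $V(\mu)V(\nu)/L$, and this is uniform along any weak-$*$ convergent sequence because $V$ is continuous. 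If you insist on your two-step factorization through an edge-decorated $\widehat{\SCurr}(F_N)$, you will still need this disjointness/multiplicity bound (or something equivalent) in place of SHNC to make $\rk_\I$ continuous; once you have it, the intermediate enlarged space becomes unnecessary scaffolding and you are back to the paper's direct construction $\N=\widehat E-\widehat V+\widehat c$.
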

We call $\N$ the \ti{intersection functional}. Our strategy of proving Theorem \ref{thm:Neumann product} is based on the proof of the existence of the reduced rank functional $\rk$. We construct $\N$ by a careful usage of \ti{occurrences}, which were introduced in \cite{KN13}.

As a corollary to our theorem,
 the inequality in the Strengthened Hanna Neumann Conjecture is generalized to the inequality for subset currents:
\begin{corintro}
Let $\mu ,\nu \in \SCurr (F_N)$. The following inequality holds:
\[ \N (\mu ,\nu )\leq \rk (\mu )\rk (\nu ).\]
\end{corintro}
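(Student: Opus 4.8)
The plan is to deduce the corollary from Theorem~\ref{thm:Neumann product} together with the Strengthened Hanna Neumann Conjecture and a density argument. First I would record the theorem of Friedman and Mineyev in the form stated in the introduction: for all finitely generated subgroups $H,K\leq F_N$ we have
\[ \N(\eta_H,\eta_K)=\N(H,K)\leq \rk(H)\rk(K)=\rk(\eta_H)\rk(\eta_K), \]
the equalities on the ends being the defining property of $\N$ from Theorem~\ref{thm:Neumann product} and the defining property of the reduced rank functional; when $H$ or $K$ is trivial this is the vacuous inequality $0\leq 0$. Using the $\RRR$-bilinearity of $\N$ and the $\RRR$-linearity of $\rk$, for any $c,d\geq 0$ this upgrades to
\[ \N(c\eta_H,d\eta_K)=cd\,\N(\eta_H,\eta_K)\leq cd\,\rk(\eta_H)\rk(\eta_K)=\rk(c\eta_H)\rk(d\eta_K). \]
Hence the inequality $\N(\mu,\nu)\leq \rk(\mu)\rk(\nu)$ holds for all $(\mu,\nu)$ in $D\times D$, where $D:=\{\,c\,\eta_H \mid c\geq 0,\ H\text{ a finitely generated subgroup of }F_N\,\}$, which is dense in $\SCurr(F_N)$.

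Next I would argue that the inequality passes to the closure. Consider the function
\[ \Phi\:\SCurr(F_N)\times\SCurr(F_N)\to\RR,\qquad \Phi(\mu,\nu):=\rk(\mu)\rk(\nu)-\N(\mu,\nu). \]
Since $\rk$ is continuous, the function $(\mu,\nu)\mapsto\rk(\mu)\rk(\nu)$ is continuous, and $\N$ is continuous by Theorem~\ref{thm:Neumann product}, so $\Phi$ is continuous and the set $\{(\mu,\nu)\mid\Phi(\mu,\nu)\geq 0\}$ is closed in $\SCurr(F_N)\times\SCurr(F_N)$. By the previous paragraph this closed set contains $D\times D$. As $D$ is dense in $\SCurr(F_N)$, the set $D\times D$ is dense in $\SCurr(F_N)\times\SCurr(F_N)$ with the product topology, so $\{\Phi\geq 0\}$ is all of $\SCurr(F_N)\times\SCurr(F_N)$; this is precisely the asserted inequality $\N(\mu,\nu)\leq\rk(\mu)\rk(\nu)$.

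The argument is essentially formal once Theorem~\ref{thm:Neumann product} is in hand; the only point that requires any care is that the inequality to be established is \emph{closed}, i.e.\ stable under weak-* limits, and this is guaranteed precisely by the joint continuity of $\N$ and the continuity of $\rk$. In particular, no new estimate on fiber product graphs is needed beyond the Strengthened Hanna Neumann Conjecture itself. I do not expect a genuine obstacle here; if anything, the place to be slightly careful is the reduction to $D\times D$ rather than to arbitrary finite non-negative linear combinations of counting subset currents, but the $\RRR$-bilinearity of $\N$ and $\RRR$-linearity of $\rk$ make the reduction to the rank-one generators $c\,\eta_H$ immediate.
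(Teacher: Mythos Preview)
Your proposal is correct and matches the paper's own argument: the paper simply notes that the rational subset currents are dense in $\SCurr(F_N)$, that $(\mu,\nu)\mapsto\rk(\mu)\rk(\nu)$ and $\N$ are continuous, and invokes the SHNC on the rational points, which is exactly the density-plus-continuity argument you wrote out in detail.
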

Since $\N (F_N, H)=\rk (H)$ for every finitely generated subgroup $H\leq F_N$, the intersection functional $\N $ is an extension of the reduced rank functional $\rk$.

Inspired by the question presented by Kapovich and Nagnibeda, we also prove the following theorems.
\begin{thmintro2}
Let $I$ be the intersection map:
\[ \C_N \times \C_N \rightarrow \{ \tn{closed subsets of}\ \partial F_N\} ;\ (S_1,S_2)\mapsto S_1\cap S_2.\]
For $\mu, \nu \in \SCurr (F_N)$ we can obtain a subset current $\widehat{I}(\mu ,\nu )$ by defining
\[ \widehat{I}(\mu ,\nu )(U):=\mu \times \nu (I^{-1}(U))\] 
for any Borel subset $U\subset \C_N$.
Then the map $\widehat{I}$ is a non-continuous $\RRR$-bilinear map
\[ \widehat{I}\: \SCurr (F_N)\times \SCurr (F_N)\to \SCurr (F_N)\]
satisfying the conditions that 
\[ \widehat{I}(\eta_H, \eta_K )=\sum_{HgK\in H\backslash F_N/K} \eta_{H\cap gKg^{-1}}\]
for any finitely generated subgroups $H,K\leq F_N$, and that
\[ \rk \circ \widehat{I}=\N.\]
\end{thmintro2}
\smallskip
\noindent \textbf{Organization of this paper.} 
In Section 2 we set up notation and summarize without proofs some important properties on subset currents in \cite{KN13}. We also recall some tools and methods used in the proof of the existence of the reduced rank functional $\rk$.
In Section 3, first, we give an answer to the question posed in \cite{KN13}, and construct the intersection functional $\N$. 
In Section 4 we represent $\N$ by using the intersection map $I$ and the reduced rank functional $\rk$. 

\section{Preliminaries}\label{sec:Preliminaries}
\subsection{Conventions regarding graphs and free groups}
This subsection is based on \cite[Subsections 2.1 and 2.2]{KN13}.
A \ti{graph} is a $0$ or $1$-dimensional CW complex. The set of 0-cells of a graph $\gD$ is denoted by $V(\gD )$ and its elements are called \ti{vertices} of $\gD$. The set of (closed) 1-cells of a graph $\gD$ is denoted by $E_{\tn{top}}(\gD )$ and its elements are called \ti{topological edges}. The interior of every topological edge is homeomorphic to the interval $(0,1)\subset \RR $ and thus admits exactly two orientations. A topological edge endowed with an orientation on its interior is called an \ti{oriented edge} of $\gD$.
The set of all oriented edges of $\gD$ is denoted by $E(\gD )$. For every oriented edge $e$ of $\gD$ there are naturally defined (and not necessarily distinct) vertices $o(e)\in V(\gD ) $, called the \ti{origin} of $e$, and $t(e)\in V(\gD )$, called the \ti{terminal} of $e$. Then the boundary of $e$ is $\{ o(e),t(e)\}$.
For an oriented edge $e\in E(\gD)$ changing its orientation to the opposite one produces another oriented edge of $\gD$ denoted by $e^{-1}$ and called the \ti{inverse} of $e$. For a graph $\gD$ giving an \ti{orientation} to $\gD$ is fixing an orientation of every topological edge of $\gD$. 

Let $\gD$ be a graph and $v\in V(\gD)$. The degree of $v$ in $\gD$ is the number of oriented edges whose origin is $v$.

For a graph $\gD$, we always give the path metric $d_{\gD}$ to $\gD$, where the length of each topological edge of $\gD$ is $1$.

A \ti{graph morphism} $f:\gD \rightarrow \gD' $ is a continuous map from a graph $\gD$ to a graph $\gD'$ that maps the vertices of $\gD$ to vertices of $\gD'$ and such that each interior of an edge of $\gD$ is mapped isometrically to an interior of an edge of $\gD'$.
A \ti{graph isomorphism} is a bijective graph morphism.

Let $N\geq 2$ be an integer. We fix a free basis $A=\{ a_1,\dots , a_N\}$ of the free group $F_N$. We denote by $\id$ the identity element of $F_N$. We denote by $\Sub (F_N)$ the set of all non-trivial finitely generated subgroups of $F_N$.

Let $X$ be the Cayley graph of $(F_N, A)$, where 
$V(X):= F_N$, $E_{\tn{top}}(X):=F_N\times A$, and for every topological edge $(g,a)\in E_{\tn{top}}(X)$ the boundary is $\{ g, ga\}$. The Cayley graph $X$ is a tree. The free group $F_N$ naturally acts on $X$ from the left by graph isomorphisms. Let $R_N$ be the quotient graph $F_N\backslash X$ and denote by $q:X\to R_N$ the canonical projection, which is a universal covering map.
The quotient graph $R_N$ consists of one vertex $x_0$ and $N$ loop-edges at the vertex $x_0$, each of which is corresponding to an element of $A$. The graph $R_N$ is called $N$-\ti{rose}. We identify $F_N$ with the fundamental group $\pi _1 (R_N,x_0)$ in a natural way, and also identify the hyperbolic boundary $\partial F_N$ of $F_N$ with the (geodesic) boundary $\partial X$ of $X$. We deal only with $\partial X$ in this paper.
However, we have to care about whether each statement depends on the basis $A$ or not since $\partial F_N$ exists without fixing any free basis $A$.

\subsection{The space of subset currents on $F_N$}\label{subsec:C_N}
In this subsection, we summarize necessary definitions and properties of subset currents on $F_N$ (see \cite{KN13} for details).

We denote by $\C_N$ the set of all closed subsets $S\subset \partial X$ such that the cardinality $\# S\geq 2$. We endow $\C_N$ with the subspace topology from the Vietoris topology on the hyperspace of $\partial X$, which consists of all closed subsets of $\partial X$. Then, $\C_N$ is a locally compact, second countable, totally disconnected and metrizable space. If we give a distance on $\partial X$ which is compatible with the topology on $\partial X$, then the topology on $\C_N$ induced by the Hausdorff distance coincides with the topology which we defined above.

For an oriented edge $e\in E(X)$, we define the \ti{cylinder} $\tn{Cyl} (e)$ to be the subset of $\partial X$ consisting of equivalence classes of all geodesic rays in $X$ emanating from the oriented edge $e$. A cylinder $\tn{Cyl} (e)$ is an open and compact subset of $\partial X$ for any $e\in E(X)$, and the collection of all $\tn{Cyl} (e) \ (e\in E(X))$ is a basis of $\partial X$.

We denote by $\Sub (X)$ the set of all non-degenerate finite subtrees of $X$, which are finite subtrees with at least two distinct vertices

Let $T\in \Sub (X)$ and let $e_1,\dots ,e_m$ be all the terminal edges of $T$, which are oriented edges whose terminal vertices are precisely the vertices of $T$ of degree $1$. Define the
\ti{subset cylinder} $\SCyl (T)$ to be the subset of $\C_N$ consisting of $S\in \C_N$ satisfying the condition that
\[ S\subset \bigcup_{i=1}^m \tn{Cyl}(e_i)\ \tn{and}\ S\cap \tn{Cyl}(e_i)\not= \emptyset \ (\forall i=1,2,\dots m).\]
For $T\in \Sub (X)$ the subset $\SCyl (T)\subset \C_N$ is compact and open, and the collection of all $\SCyl (T)\ (T\in \Sub (X))$ forms a basis for the topology on $\C_N $.

\smallskip

Note that the left continuous action of $F_N$ on $\partial X$ naturally extends to a left continuous action on $\C_N$.

A \ti{subset current} on $F_N$ is a Borel measure on $\C_N$ that is $F_N$-invariant and locally finite (i.e., finite on all compact subsets of $\C_N$.)

The set of all subset currents on $F_N$ is denoted by $\SCurr (F_N)$. The space $\SCurr (F_N)$ has the $\RRR$-linear structure, and the space $\SCurr (F_N)$ is endowed with the natural weak-$*$ topology, which is characterized by the following proposition (1).

\begin{proposition}[See \cite{KN13}, Proposition 3.7]\label{prop:Scyl is conti} \mbox{}
\begin{enumerate}
\item Let $\mu, \mu_n\in \SCurr (F_N)\ (n=1,2,\dots )$. Then $\lim_{n\to \infty }\mu_n =\mu$ in $\SCurr (F_N)$ if and only if for every $T\in \Sub (X)$ we have 
\[ \lim_{n\to \infty }\mu_n (\SCyl (T))=\mu (\SCyl (T)).\]

\item For each $T\in \Sub (X)$ the functional
\[ \SCurr (F_N)\to \RRR ;\ \mu \mapsto \mu (\SCyl (T))\]
is continuous and $\RRR$-linear.
\end{enumerate}
\end{proposition}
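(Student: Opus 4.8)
The plan is to derive everything from the definition of the weak-$*$ topology on $\SCurr(F_N)$ together with the fact, recorded above, that each $\SCyl(T)$ is both compact and open in the metrizable space $\C_N$. First I would dispose of part (2) and the ``only if'' direction of part (1). Since $\SCyl(T)$ is compact (hence closed, as $\C_N$ is Hausdorff) and open, its indicator function $\mathbf{1}_{\SCyl(T)}$ is continuous on $\C_N$, and as $\SCyl(T)$ is compact this function has compact support; thus $\mathbf{1}_{\SCyl(T)}\in C_c(\C_N)$. The weak-$*$ topology on $\SCurr(F_N)$ is the coarsest topology making $\mu\mapsto\int_{\C_N}f\,d\mu$ continuous for all $f\in C_c(\C_N)$, and taking $f=\mathbf{1}_{\SCyl(T)}$ shows that $\mu\mapsto\mu(\SCyl(T))=\int_{\C_N}\mathbf{1}_{\SCyl(T)}\,d\mu$ is continuous; it is clearly $\RRR$-linear, since $\mu\mapsto\mu(E)$ is additive and positively homogeneous for any fixed Borel set $E$. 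This proves (2), and it also yields the forward implication in (1): if $\mu_n\to\mu$ then $\mu_n(\SCyl(T))=\int\mathbf{1}_{\SCyl(T)}\,d\mu_n\to\int\mathbf{1}_{\SCyl(T)}\,d\mu=\mu(\SCyl(T))$ for every $T\in\Sub(X)$.

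The substance is the converse implication in (1). Assume $\mu_n(\SCyl(T))\to\mu(\SCyl(T))$ for all $T\in\Sub(X)$; I must show $\int f\,d\mu_n\to\int f\,d\mu$ for every $f\in C_c(\C_N)$, and I would do this in three steps. \emph{(a)} For any compact $L\subseteq\C_N$ one has $\sup_n\mu_n(L)<\infty$: cover $L$ by finitely many subset cylinders (possible since they form a basis of the topology), bound $\mu_n(L)$ by the corresponding finite sum of the $\mu_n(\SCyl(T_i))$, and use that each of these is a convergent --- hence bounded --- real sequence. \emph{(b)} Given $f\in C_c(\C_N)$ and $\varepsilon>0$, choose a compact open neighbourhood $L$ of $\mathrm{supp}(f)$ and, exploiting total disconnectedness of $\C_N$, the uniform continuity of $f$, and the fact that the $\SCyl(T)$ form a basis of compact open sets, partition $L$ into finitely many pairwise disjoint subset cylinders $\SCyl(T_1),\dots,\SCyl(T_k)$ on each of which $f$ oscillates by less than $\varepsilon$; then for arbitrary $S_i\in\SCyl(T_i)$ the function $g:=\sum_{i=1}^k f(S_i)\,\mathbf{1}_{\SCyl(T_i)}$ satisfies $\|f-g\|_\infty<\varepsilon$ and $\mathrm{supp}(g)\subseteq L$. \emph{(c)} By the triangle inequality,
\[
\Bigl|\int f\,d\mu_n-\int f\,d\mu\Bigr|\le\int|f-g|\,d\mu_n+\Bigl|\int g\,d\mu_n-\int g\,d\mu\Bigr|+\int|f-g|\,d\mu ;
\]
here the first term is at most $\varepsilon\,\mu_n(L)\le\varepsilon\sup_m\mu_m(L)$, the last term is at most $\varepsilon\,\mu(L)$, and the middle term equals $\sum_{i=1}^k f(S_i)\bigl(\mu_n(\SCyl(T_i))-\mu(\SCyl(T_i))\bigr)$, which tends to $0$ by hypothesis. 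Hence $\limsup_n\bigl|\int f\,d\mu_n-\int f\,d\mu\bigr|\le\varepsilon\bigl(\sup_m\mu_m(L)+\mu(L)\bigr)$, and letting $\varepsilon\to0$ finishes the proof.

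I expect step (b) to be the main obstacle, specifically the assertion that a compact open subset of $\C_N$ is a finite disjoint union of subset cylinders, and that these can be taken arbitrarily ``fine'' so that a prescribed continuous function is nearly constant on each piece. This relies on the combinatorial structure of the $\SCyl(T)$: one needs a refinement formula writing $\SCyl(T)$ as a finite disjoint union of $\SCyl(T')$ over suitable enlargements $T'$ of $T$ --- obtained by subdividing at the terminal vertices of $T$ and using that $\mathrm{Cyl}(e)$ splits as the disjoint union of the cylinders of the edges issuing from $t(e)$ --- so that any finite family of subset cylinders admits a common subdivision into subset cylinders. Granting this ``rich clopen basis'' property, the remainder is the routine measure-theoretic estimate above; without it, there is no passage from control of cylinder measures to control of integrals against general $f\in C_c(\C_N)$.
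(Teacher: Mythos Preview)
The paper does not actually contain a proof of this proposition: it is stated with the attribution ``See \cite{KN13}, Proposition 3.7'' and left without argument, as part of the preliminaries summarized from Kapovich--Nagnibeda. So there is nothing in the paper to compare your proof against.

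That said, your argument is correct in outline and is exactly the standard route. Part (2) and the forward direction of (1) are immediate from $\mathbf{1}_{\SCyl(T)}\in C_c(\C_N)$, as you say. For the converse in (1), steps (a) and (c) are routine; you have correctly isolated step (b) as the only substantive point. The refinement you sketch does work: given $T\in\Sub(X)$ and an integer $r$ larger than the radius of $T$, one has
\[
\SCyl(T)=\bigsqcup_{\substack{T'\in\R_r\\ T\underset{\mathrm{ext}}{\subset}T'}}\SCyl(T'),
\]
which is precisely the ``subdivision at terminal vertices'' you describe (and is implicit in Remark~\ref{rem:r-neighborhood} and Proposition~\ref{scyl}). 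Since for fixed $r$ the sets $\SCyl(T')$ with $T'\in\R_r$ are pairwise disjoint, any finite family of subset cylinders can be simultaneously refined into disjoint subset cylinders of a common grade, and these refinements can be made as fine as one likes by increasing $r$. This supplies the partition required in (b), and your $\varepsilon$-argument then goes through. One small point: your compact open neighbourhood $L$ of $\mathrm{supp}(f)$ exists because $\C_N$ is locally compact and totally disconnected, but to partition $L$ itself into subset cylinders you should first write $L$ as a finite union of subset cylinders (from the basis property) and then apply the common-refinement procedure to those.
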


Recall that for a subgroup $H$ of a group $G$ the \ti{commensurator} or \ti{virtual normalizer} $\tn{Comm}_{G}(H)$ of $H$ in $G$ is defined as
\[ \tn{Comm}_{G}(H):= \{ g\in G\ |\ [H:H\cap gHg^{-1}]<\infty \ \tn{and}\ [gHg^{-1}:H\cap gHg^{-1}]<\infty \}.\]
Let $H\in \Sub (F_N)$. The \ti{limit set} $\Lambda(H)$ of $H$ in $\partial X (=\partial F_N)$ is the set of all $\xi \in \partial X$ such that there exists a sequence of $h_n\in H\ (n=1,2,\dots )$ satisfying $\lim_{n\to \infty} h_n=\xi $ in $X\cup \partial X$.
See \cite[Proposition 4.1 and Proposition 4.2]{KN13} for elementary properties of limit sets.

Note that for every $H\in \Sub (F_N)$ we have $\Lambda (H)\in \C_N$, and 
\[ \tn{Comm}_{F_N}(H)=\tn{Stab}_{F_N}(\Lambda (H)),\]
where $\tn{Stab}_{F_N}(\Lambda (H)):=\{ g\in F_N\ |\ g\Lambda (H)=\Lambda (H)\}$ is the stabilizer of $\Lambda(H)$.

For $H\in \Sub (F_N)$ a subset current $\eta_H$, which is called a \ti{counting subset current}, is defined as follows.
Set $\widehat{H}=\tn{Comm}_{F_N}(H)$.

Suppose first that $H=\widehat{H}$. Define a Borel measure $\eta_H$ on $\C_N$ as 
\[ \eta_H:=\sum_{H' \in [H] } \delta _{\Lambda (H')},\]
where $[H]$ is the conjugacy class of $H$ in $F_N$ and $\delta _{\Lambda (H')}$ is the Dirac measure on $\C_N$ which means that for a Borel subset $U\subset \C_N$, if $\Lambda (H')\in U$, then $\delta _{\Lambda (H')}(U)=1$; if $\Lambda (H')\not\in U$, then $\delta _{\Lambda (H')}(U)=0$.

Now let $H$ be an arbitrary nontrivial finitely generated subgroup of $F_N$. Define $\eta_H$ as $\eta_H:=[\widehat{H}:H]  \eta_{\widehat{H}}$. Note that $\tn{Comm}_{F_N}(\widehat{H})=\widehat{H}$ and $[\widehat{H}:H]$ is finite.

We can see that $\eta_H$ is a subset current, especially, locally finite (see \cite[Lemma 4.4]{KN13}).
For the trivial subgroup $H=\{ \id \}$ we set $\eta_H=0\in \SCurr (F_N)$.
A subset current $\mu \in \SCurr (F_N)$ is said to be \ti{rational} if $\mu= r\eta_H $ for some $r\geq 0$ and $H\in \Sub (F_N)$. The set of all rational subset currents is a dense subset of $\SCurr (F_N)$ (see \cite[Theorem 5.8]{KN13}, and \cite{Kap13}).

We observe the following proposition.
\begin{proposition}\label{gH}
Let $H\in \Sub(F_N)$. Then we have 
\[ \eta_H = \sum _{gH\in F_N/H} \delta _{g\Lambda (H)}.\]
\end{proposition}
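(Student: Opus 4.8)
The plan is to peel the definition of $\eta_H$ back to its two cases — first the case where $H$ is its own commensurator, then the general case via the finite-index relation $\eta_H = [\widehat H : H]\,\eta_{\widehat H}$ — and in each case translate the indexing set (a conjugacy class, resp.\ a coset space of $\widehat H$) into the coset space $F_N/H$ using the equivariance $\Lambda(gHg^{-1}) = g\Lambda(H)$ and the identity $\tn{Comm}_{F_N}(H) = \tn{Stab}_{F_N}(\Lambda(H))$.

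First I would treat the case $H = \widehat H$, i.e.\ $\tn{Stab}_{F_N}(\Lambda(H)) = H$. Since $N_{F_N}(H) \subseteq \tn{Comm}_{F_N}(H) = H \subseteq N_{F_N}(H)$, the normalizer of $H$ is $H$ itself, so the map $F_N/H \to [H]$, $gH \mapsto gHg^{-1}$, is a well-defined bijection. Combining this with $\Lambda(gHg^{-1}) = g\Lambda(H)$ gives
\[
\eta_H = \sum_{H' \in [H]} \delta_{\Lambda(H')} = \sum_{gH \in F_N/H} \delta_{\Lambda(gHg^{-1})} = \sum_{gH \in F_N/H} \delta_{g\Lambda(H)}.
\]

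For the general case, set $K = \widehat H = \tn{Comm}_{F_N}(H)$, so that $[K:H] < \infty$ and $\widehat K = K$. Because $H$ has finite index in $K$, their limit sets coincide, $\Lambda(H) = \Lambda(K)$ (an elementary property of limit sets; see \cite[Propositions 4.1 and 4.2]{KN13}), hence $g\Lambda(H) = g\Lambda(K)$ for all $g$, and $\tn{Stab}_{F_N}(\Lambda(K)) = K$. Now decompose $F_N/H$ along the fibres of the natural surjection $F_N/H \twoheadrightarrow F_N/K$: the fibre over $gK$ consists of the $[K:H]$ cosets $gkH$ with $kH \in K/H$, and for each of these $gk\Lambda(H) = gk\Lambda(K) = g\Lambda(K)$ since $k \in K$ fixes $\Lambda(K)$. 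Therefore, using the already-established case applied to $K$,
\[
\sum_{gH \in F_N/H} \delta_{g\Lambda(H)} = \sum_{gK \in F_N/K} [K:H]\,\delta_{g\Lambda(K)} = [K:H]\,\eta_K = \eta_H,
\]
which is the claim.

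I do not expect a genuine obstacle here; the only point that needs care is the general case, where one must match the finite-index scaling factor $[K:H]$ against the fibres of $F_N/H \to F_N/K$, and this works precisely because of the two inputs $\Lambda(H) = \Lambda(\widehat H)$ for the finite-index pair and the fact that $\widehat H$ stabilizes this common limit set.
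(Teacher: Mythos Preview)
Your proof is correct and follows essentially the same two-step approach as the paper: first the case $H=\widehat H$ via the bijection $F_N/H\to[H]$ (your normalizer argument $H\le N_{F_N}(H)\le\tn{Comm}_{F_N}(H)=H$ is exactly the paper's injectivity check, rephrased), then the general case by fibering $F_N/H$ over $F_N/\widehat H$ and using $\Lambda(H)=\Lambda(\widehat H)$ together with $\widehat H=\tn{Stab}_{F_N}(\Lambda(H))$. The only cosmetic difference is that the paper fixes explicit coset representatives where you speak of fibres of the surjection $F_N/H\twoheadrightarrow F_N/\widehat H$.
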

\begin{proof}
First, we assume $H=\textnormal{Comm}_{F_N}(H)(=\tn{Stab}_{F_N}(\Lambda (H)))$. Then we have the following map:
\[ \Phi : F_N/H \rightarrow [H];\ gH \mapsto gHg^{-1}=\tn{Stab}_{F_N}(g\Lambda (H)).\]
The map $\Phi$ is clearly surjective and we can prove that $\Phi$ is injective as follows.
Let $g_1H, g_2 H \in F_N/H$, and suppose $g_1 Hg_1^{-1}=g_2 H g_2^{-1}$. Then we have 
$g_2^{-1}g_1Hg_1^{-1}g_2=H$, and so $g_2^{-1}g_1\in H$ by the definition of the commensurator. Hence $g_1H=g_2H$. Therefore $\Phi $ is bijective, and this implies
\[ \eta _H=\sum _{H' \in [H]} \delta _{\Lambda (H')}=\sum _{gH\in F_N/H} \delta _{\Lambda (\Phi (gH))}=\sum _{gH\in F_N/H} \delta _{g\Lambda (H)}.\]

In general, put $\widehat{H}:=\tn{Comm}_{F_N}H$ and $m:= [\widehat{H}:H]$. Then we can choose $h_1,\dots ,h_m\in \widehat{H}$ such that
$\{ h_iH\}_{i=1,\dots ,m}$ is a complete system of representatives of $\widehat{H}/H$. If $ \{ g_j \widehat{H}\} _{j\in J}$ is a complete system of representatives of $F_N/\widehat{H}$, then $\{ g_j h_iH\}_{i=1,\dots ,m, j\in J}$ is a complete system of representatives of $F_N/H$.
Since $h_i \in \widehat{H}=\tn{Stab}_{F_N}(\Lambda (H))$ and $\Lambda(H)=\Lambda(\widehat{H})$, we have 
\begin{align*}
\sum _{gH\in F_N/H} \delta _{g\Lambda (H)}=\sum _{i,j} \delta _{g_jh_i\Lambda (H)}
	=&m\sum _{j\in J} \delta _{g_j\Lambda (\widehat{H})}\\
=&m\sum _{g\widehat{H}\in F_N/\widehat{H}} \delta _{g\Lambda (\widehat{H})}
	=m\eta_{\widehat{H}}=\eta _H,
\end{align*}
as required.\end{proof}

If $\varphi \in \tn{Aut }(F_N)$ is an automorphism of $F_N$, then $\varphi$ induces a quasi-isometry of $X$, and moreover, the quasi-isometry extends to a homeomorphism $\varphi \: \partial X \to \partial X$, where we still denote it by $\varphi$. Thus $\tn{Aut}(F_N)$ has a natural action on $\C_N$. Moreover, $\tn{Aut}(F_N)$ acts on $\SCurr (F_N)$ $\RRR$-linearly and continuously by pushing forward. Explicitly, 
\[ (\varphi \mu )(U):=\mu (\varphi ^{-1}(U))\]
for $\varphi \in \tn{Aut}(F_N), \mu \in \SCurr (F_N)$ and every Borel subset $U\subset \C_N$. Then for $\varphi \in \tn{Aut}(F_N)$ and $H\in \Sub (F_N)$ we have $\varphi \eta_H=\eta_{\varphi (H)}$. Since subset currents are $F_N$-invariant, the action of $\tn{Aut}(F_N)$ on $\SCurr (F_N)$ factors through the action of the outer automorphism group  $\tn{Out} (F_N)$ on $\SCurr (F_N)$. The action of $\tn{Out} (F_N)$ on $\SCurr (F_N)$ is effective.

\subsection{$R_N$-graphs and occurrences}

In this subsection following \cite[Subsection 4.2]{KN13} we first define $R_N$-\ti{graphs}, and also \ti{occurreces} for an $R_N$-graph and for $T\in \Sub (X)$, the set of all non-degenerate finite subtrees of $X$. Occurrences play an essential role in studying rational subset currents on $F_N$ (Lemma \ref{scyl,occ}). 

Recall that $R_N$ is the quotient graph $F_N\backslash X$, which is an $N$-rose. An $R_N$-\ti{graph} is a graph $\gD$ with a graph morphism $\tau \: \gD \rightarrow R_N$. We call $\tau$ an $R_N$-graph structure.
Let $(\gD_1,\tau_1)$ and $(\gD_2, \tau_2)$ be $R_N$-graphs. A graph morphism $f\: \gD_1\rightarrow \gD_2$ is called an $R_N$-\ti{graph morphism} if $\tau_1=\tau_2 \circ f$. For an $R_N$-graph $(\gD ,\tau )$ and $v\in V(\gD )$ we call a pair $((\gD ,\tau ),v)$ (or simply $(\gD ,v)$) a based $R_N $-graph with a base point $v$.

An $R_N$-graph $(\gD, \tau)$ is said to be \ti{folded} if $\tau$ is locally injective (immersion). 
We say that a finite $R_N$-graph $(\gD ,\tau)$ is an $R_N$-\ti{core graph} if $(\gD ,\tau )$ is folded and has no degree-one and degree-zero vertices. We do not assume that $R_N$-core graphs are connected. 

For a graph $\gD$ giving an $R_N$-structure $\tau \: \gD \rightarrow R_N$ to $\gD$ can be regarded as giving a \ti{label structure} $E_{\tn{top}}(\gD )\to A$ and giving an orientation to $\gD$. We say a topological or oriented edge $e$ of $\gD$ has a label $a\in A$ when the graph morphism $\tau$ maps the edge $e$ to the loop of $R_N$ corresponding to $a\in A$.

\begin{definition}{\em
Let $\gD$ be a graph. Let $T\in \Sub (X)$. The \ti{interior} of $T$ is $T\setminus \{ \tn{degree-one vertices of}\ T\}$. A graph morphism $f\: T\to \gD$ is said to be \ti{locally homeomorphic in the interior} if the restriction of $f$ to the interior of $T$ is locally homeomorphic, in other words, the degree of $v$ in $T$ equals to the degree of $f(v)$ in $\gD$ for every $v\in V(T)$ with degree more than one.

Let $T\in \tn{Sub}(X)$, and $Y$ be a (not necessarily finite) subtree of $X$. We say that $Y$ is an \ti{extension} of $T$ and denote by
$T \underset{\mathrm{ext}}{\subset }Y$, if $T\subset Y$ and if the inclusion map is locally homeomorphic in the interior.
}\end{definition}

\begin{proposition}\label{scyl}
Let $S\in \C_N$ and $T\in \Sub(X)$. Then $S\in \SCyl (T)$ if and only if $T\underset{\mathrm{ext}}{\subset }\mathrm{Conv}(S)$, where $\mathrm{Conv}(S)$ is the convex hull of $S$ in $X$.
\end{proposition}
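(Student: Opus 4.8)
The plan is to translate both sides of the claimed equivalence into combinatorial statements about which geodesic rays of $X$ traverse which edges, and then argue inside the tree $X$. Write $e_1,\dots,e_m$ for the terminal edges of $T$. I will use three elementary facts about the tree $X$ and the closed set $S$: (i) $\mathrm{Conv}(S)$ is a subtree of $X$ with no degree-one vertex, because every vertex of $\mathrm{Conv}(S)$ lies in the interior of a bi-infinite geodesic whose two endpoints belong to $S$; (ii) a finite subtree $T\subset X$ is convex in $X$, and any geodesic ray emanating from a vertex of $T$ eventually leaves $T$; (iii) for a vertex $v\in V(\mathrm{Conv}(S))$ and an oriented edge $f$ of $X$ with $o(f)=v$, one has $f\in E(\mathrm{Conv}(S))$ if and only if $S\cap\tn{Cyl}(f)\neq\emptyset$. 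I will also freely use that $\tn{Cyl}(e)=\partial X\setminus\tn{Cyl}(e^{-1})$ and that cylinders of distinct oriented edges sharing an origin are disjoint; together with convexity of $T$ and the fact that $t(e_i)$ is a degree-one vertex of $T$, these yield the slogan: for $v\in V(T)$ with $v\neq t(e_i)$, one has $\xi\in\tn{Cyl}(e_i)$ if and only if the geodesic ray $[v,\xi)$ traverses $e_i$ from $o(e_i)$ to $t(e_i)$.

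For the implication $T\underset{\mathrm{ext}}{\subset}\mathrm{Conv}(S)\ \Rightarrow\ S\in\SCyl(T)$: first, $S\cap\tn{Cyl}(e_i)\neq\emptyset$ for each $i$, since otherwise $S\subset\tn{Cyl}(e_i^{-1})$, so no geodesic between two points of $S$ traverses $e_i$, i.e.\ $e_i\notin E(\mathrm{Conv}(S))$, contradicting $e_i\in E(T)\subset E(\mathrm{Conv}(S))$. Second, to prove $S\subset\bigcup_i\tn{Cyl}(e_i)$, fix $\xi\in S$ and a vertex $v_0\in V(T)$, follow the ray $[v_0,\xi)$, and let $f$ be the first edge it traverses that does not lie in $E(T)$, say $f$ emanates from $v'\in V(T)$. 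If $\deg_T(v')>1$, then $v'\in V(\mathrm{Conv}(S))$, and $\xi\in S\cap\tn{Cyl}(f)$ forces $f\in E(\mathrm{Conv}(S))$ by fact (iii); since $T\subset\mathrm{Conv}(S)$ this gives $\deg_{\mathrm{Conv}(S)}(v')\ge\deg_T(v')+1$, contradicting the extension hypothesis. Hence $v'$ is a degree-one vertex of $T$, say $v'=t(e_i)$, and a short argument using convexity of $T$ and the fact that $e_i$ is the unique $T$-edge incident to the leaf $t(e_i)$ (with a harmless split according to whether $v'=v_0$) shows $\xi\in\tn{Cyl}(e_i)$.

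For the converse $S\in\SCyl(T)\ \Rightarrow\ T\underset{\mathrm{ext}}{\subset}\mathrm{Conv}(S)$: to get $T\subset\mathrm{Conv}(S)$, take any $f\in E(T)$; deleting $f$ splits $T$ into two subtrees, each of which contains a degree-one vertex of $T$, say $t(e_i)$ on one side and $t(e_j)$ on the other with $i\neq j$. Convexity of $T$ shows that $\tn{Cyl}(e_i)$ and $\tn{Cyl}(e_j)$ lie in the two complementary halves of $\partial X$ determined by $f$, so choosing $\xi\in S\cap\tn{Cyl}(e_i)$ and $\xi'\in S\cap\tn{Cyl}(e_j)$, the geodesic $[\xi,\xi']$ traverses $f$, and hence $f\in E(\mathrm{Conv}(S))$. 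For the degree condition, suppose some $v\in V(T)$ has $\deg_T(v)>1$ but is incident to an edge $g\in E(\mathrm{Conv}(S))\setminus E(T)$. Then $S\cap\tn{Cyl}(g)\neq\emptyset$; pick $\xi$ there and, using $S\subset\bigcup_i\tn{Cyl}(e_i)$, an index $k$ with $\xi\in\tn{Cyl}(e_k)$. Since $\deg_T(v)>1$ we have $v\neq t(e_k)$, so by the slogan $o(e_k)$ lies on $[v,\xi)$; therefore $[v,o(e_k)]\subset T$ is an initial segment of $[v,\xi)$ whose first edge is $g\notin E(T)$ (or else $v=o(e_k)$, forcing $g=e_k\in E(T)$) --- a contradiction. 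Hence $\deg_{\mathrm{Conv}(S)}(v)=\deg_T(v)$ for every vertex of $T$ of degree $>1$, which is exactly the extension condition.

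The real content is concentrated in the ``sidedness'' bookkeeping behind fact (iii) and the slogan: correctly matching ``$[v_0,\xi)$ crosses the terminal edge $e_i$ from $o(e_i)$ to $t(e_i)$'' with ``$\xi\in\tn{Cyl}(e_i)$'', and locating $\tn{Cyl}(e_i)$ inside the correct half of $\partial X$ relative to an interior edge $f$ of $T$. Once the characterization of the edges of $\mathrm{Conv}(S)$ via cylinders (fact (iii)) is established --- which uses only that $X$ is a tree and that $S$ is closed with $\# S\ge 2$ --- both implications reduce to the short arguments above, so I expect no serious obstacle beyond this routine but fiddly casework.
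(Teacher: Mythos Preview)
Your proof is correct, but it is organized differently from the paper's. The paper works globally with \emph{pairs} of boundary points: it writes $T=\bigcup_{i,j}[v_i,v_j]$ and $\mathrm{Conv}(S)=\bigcup_{\xi,\zeta\in S}(\xi,\zeta)$, and then observes that a bi-infinite geodesic $(\xi,\zeta)$ with $\xi\in\tn{Cyl}(e_i)$ and $\zeta\in\tn{Cyl}(e_j)$ (for $i\neq j$) is precisely an extension of $[v_i,v_j]$, while one with both endpoints in the same $\tn{Cyl}(e_i)$ misses the edges of $T$ entirely. Both implications then fall out of this single dichotomy. Your argument instead isolates a \emph{local} criterion --- your fact (iii), that an edge $f$ at a vertex of $\mathrm{Conv}(S)$ lies in $\mathrm{Conv}(S)$ iff $S\cap\tn{Cyl}(f)\neq\emptyset$ --- and then checks the inclusion $T\subset\mathrm{Conv}(S)$ edge by edge and the degree condition vertex by vertex, following a single ray $[v_0,\xi)$ rather than a bi-infinite geodesic. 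The paper's packaging is slightly shorter because the pair-of-points picture handles containment and the degree condition simultaneously; your packaging makes the mechanism (fact (iii)) explicit and reusable, at the cost of the extra case splits you flag at the end. Neither approach has any real advantage in this particular lemma, but your fact (iii) is exactly the sort of statement that gets reused later in the paper (e.g.\ in the description of $\R(T)$ and in Section~\ref{sec:intersection}), so isolating it is not wasted effort.
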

\begin{proof}
Let $e_1,\dots ,e_m$ be all the terminal edges of $T$ and $v_i$ be the terminal of $e_i$.
Then we have 
\[ T =\bigcup _{i,j=1,\dots ,m}[v_i,v_j],\]
where $[v_i , v_j ]$ is the geodesic from $v_i$ to $v_j $ in $X$. Similarly,
\[ \mathrm{Conv}(S)=\bigcup _{\xi ,\zeta \in S} (\xi , \zeta ),\]
where $(\xi , \zeta )$ is the bi-infinite geodesic from $\xi$ to $\zeta $ in $X$.

Suppose $S\in \SCyl (T)$. By the definition $S\subset \bigcup_i \tn{Cyl}(e_i)$ and $S\cap \tn{Cyl}(e_i)\not= \emptyset \ (i=1,\dots , m)$. If $\xi ,\zeta \in S\cap \tn{Cyl}(e_i)$ for the same $i$, then $(\xi, \zeta )$ does not contain any edges of $T$. If $\xi \in S\cap \tn{Cyl}(e_i), \zeta \in S\cap \tn{Cyl}(e_j) (i\not=j)$, then the geodesic $(\xi, \zeta)$ is an extension of $[v_i, v_j]$. Therefore we have $T\underset{\mathrm{ext}}{\subset }\mathrm{Conv}(S)$.

Next, we assume $T\underset{\mathrm{ext}}{\subset }\mathrm{Conv}(S)$. For every $v_i, v_j(i\not=j )$ there exist $\xi ,\zeta \in S$ such that $(\xi ,\zeta )$ is an extension of $[v_i, v_j]$ and then we have $\xi \in \tn{Cyl}(e_i)$ and $\zeta \in \tn{Cyl}(e_j)$. Hence $S\cap \tn{Cyl}(e_i)\not=\emptyset\ (i=1,\dots ,m)$. To prove $S\subset \bigcup_i \tn{Cyl}(e_i)$, we take $\xi \in S$ and $\zeta \in S\cap \tn{Cyl}(e_i)$ for some $i$. If $(\xi ,\zeta)$ contains some edges of $T$, then $T\underset{\mathrm{ext}}{\subset }\mathrm{Conv}(S)$ implies that there exists $v_j$ such that $(\xi ,\zeta)$ is an extension of $[v_j, v_i]$. Thus $\xi \in \tn{Cyl}(e_j)$ and $S\subset \bigcup_i \tn{Cyl}(e_i)$.
If $(\xi ,\zeta)$ does not contain any edges of $T$, we choose $\zeta' \in S\cap \tn{Cyl}(e_j) \ (j\not=i)$. Then $(\zeta ,\zeta')$ is an extension of $[v_i,v_j]$. Since $(\xi ,\zeta)\cup (\xi, \zeta')\supset (\zeta, \zeta')$, the geodesic $(\xi,\zeta')$ contains some edges of $T$. Now, we can apply the above argument to $(\xi ,\zeta')$.
\end{proof}

\begin{definition}{\em
Let $T\in \Sub (X)$. We can regard $T$ as an $R_N$-graph with the $R_N$-graph structure inherited from the canonical projection $q\: X\to R_N$. Let $\gD$ be an $R_N$-core graph. An \ti{occurrence} of $T$ in $\gD$ is an $R_N$-graph morphism $f\: T\rightarrow \gD$ which is locally homeomorphic in the interior.
Let $\mathrm{Occ}(T, \gD )$ be the set of all occurrences of $T$ in $\gD$.
}\end{definition}

Consider a based $R_N$-graph $(T, x)$ such that $T\in \Sub (X)$. For a based and folded $R_N$-graph $(\gD ,v)$ there exists at most one based occurrence $f\: (T,x)\rightarrow (\gD,v)$, where $f(x)=v$. In order to compute $\# \mathrm{Occ}(T,\gD )$, it is sufficient to see whether there exists a based $R_N$-graph morphism $f\: (T,x)\rightarrow (\gD, v)$ for each $v\in V(\gD )$. Hence 
\[ \# \mathrm{Occ}(T,\gD) =\# \{ v\in V(\gD )\ |\ \exists f\: (T,x)\rightarrow (\gD , v)\  \tn{a based occurrence}\}.\]

\begin{notation}{\em
Let $H\in \Sub(F_N)$. Set $X_H:=\mathrm{Conv}(\Lambda (H))$, which is the unique minimal $H$-invariant subtree of $X$. We define $\gD_H$ to be the quotient graph $H\backslash X_H$ and denote by $q_H\: X_H\rightarrow \gD_H$ the canonical projection. Then $\gD_H$ becomes an $R_N$-core graph by the induced graph morphism $\tau_H\: \gD_H\rightarrow R_N$ from $q\:X\rightarrow R_N$. 

We denote by $\Sub (X,\id )$ the set of all nontrivial finite based subtrees of $X$ with the base point $\id$. For $(T,\id )\in \Sub (X,\id )$ we denote it briefly by $T$.
}\end{notation}

The following lemma is a direct corollary from Section 4.2 and Section 4.3 in \cite{KN13}, and plays an essential role in studying rational subset currents on $F_N$.

\begin{lemma}\label{scyl,occ}
Let $H\in \Sub(F)$ and $T\in \Sub(X,\id )$. Then we have 
\begin{align*}
\eta_H(\SCyl (T))
	&=\# \mathrm{Occ}(T,\gD_H)\\
	&(=\# \{ v\in V(\gD_H )\ |\ \exists f\: (T,\id )\rightarrow (\gD_H, v)\  \tn{a based occurrence}\} ).
\end{align*}
\end{lemma}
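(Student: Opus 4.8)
The plan is to reduce the statement entirely to the covering-space dictionary between subgroups of $F_N = \pi_1(R_N, x_0)$ and based $R_N$-core graphs, plus Proposition \ref{scyl}. First I would recall that $X_H = \mathrm{Conv}(\Lambda(H))$ is the minimal $H$-invariant subtree of $X$, that $q_H \colon X_H \to \gD_H = H\backslash X_H$ is a covering in the interior (the only failure of being a genuine covering is at degree-one vertices, which $\gD_H$ does not have since it is a core graph), and that the $R_N$-graph structure $\tau_H$ is an immersion. The key point is that for $H' \in [H]$, say $H' = gHg^{-1}$, the tree $X_{H'} = gX_H$ and the point $g\Lambda(H') $-business interacts with $\SCyl(T)$ exactly through Proposition \ref{scyl}: $\Lambda(H') \in \SCyl(T)$ iff $T \underset{\mathrm{ext}}{\subset} \mathrm{Conv}(\Lambda(H')) = X_{H'}$.

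Concretely, I would argue as follows. By Proposition \ref{gH} we may write $\eta_H = \sum_{gH \in F_N/H} \delta_{g\Lambda(H)}$, so that
\[
\eta_H(\SCyl(T)) = \#\{\, gH \in F_N/H \;:\; g\Lambda(H) \in \SCyl(T) \,\}.
\]
By Proposition \ref{scyl} and the fact that $\mathrm{Conv}(g\Lambda(H)) = gX_H$, the condition $g\Lambda(H) \in \SCyl(T)$ is equivalent to $T \underset{\mathrm{ext}}{\subset} gX_H$, i.e.\ to $g^{-1}T \underset{\mathrm{ext}}{\subset} X_H$ (the extension relation being $F_N$-equivariant). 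So the count becomes
\[
\eta_H(\SCyl(T)) = \#\{\, gH \in F_N/H \;:\; g^{-1}T \underset{\mathrm{ext}}{\subset} X_H \,\}.
\]
Now I would set up the bijection with based occurrences: to a coset $gH$ with $g^{-1}T \underset{\mathrm{ext}}{\subset} X_H$, associate the vertex $v := q_H(g^{-1}\cdot \id) \in V(\gD_H)$ together with the composite $f := q_H|_{g^{-1}T} \circ (g^{-1}\cdot) \colon T \to \gD_H$. Since $g^{-1}T \underset{\mathrm{ext}}{\subset} X_H$ means the inclusion $g^{-1}T \hookrightarrow X_H$ is locally homeomorphic in the interior, and $q_H$ is a covering in the interior of $X_H$, the composite $f$ is an $R_N$-graph morphism that is locally homeomorphic in the interior, i.e.\ a based occurrence $(T,\id) \to (\gD_H, v)$. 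One checks this is well defined on cosets (changing $g$ to $gh$, $h\in H$, changes $g^{-1}\cdot\id$ by $h^{-1}$, which $q_H$ identifies), and that it is a bijection onto $\{(v,f) : f\colon (T,\id)\to(\gD_H,v)\text{ a based occurrence}\}$, using the uniqueness of based occurrences (noted just before the Notation block) together with the lifting property of the covering $q_H$ to recover $g^{-1}T \subset X_H$ from $f$. Finally, because a based occurrence $(T,\id)\to(\gD_H,v)$ exists for at most one $f$ per $v$, this set is in bijection with $\{v \in V(\gD_H) : \exists\, f\colon (T,\id)\to(\gD_H,v)\text{ a based occurrence}\} = \mathrm{Occ}(T,\gD_H)$, which gives the claimed equality.

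The main obstacle I anticipate is the "general $H$" bookkeeping: when $H \neq \widehat{H} = \mathrm{Comm}_{F_N}(H)$, one has $\Lambda(H) = \Lambda(\widehat{H})$ and $X_H = X_{\widehat H}$, yet $\gD_H = H\backslash X_H$ is a (connected) $[\widehat H : H]$-fold cover of $\gD_{\widehat H}$, and $\eta_H = [\widehat H : H]\,\eta_{\widehat H}$. I would need to verify that $\#\mathrm{Occ}(T, \gD_H) = [\widehat H : H]\cdot \#\mathrm{Occ}(T,\gD_{\widehat H})$, which should follow because based occurrences into $\gD_H$ project to based occurrences into $\gD_{\widehat H}$ with fibers of size exactly $[\widehat H : H]$ (the covering $\gD_H \to \gD_{\widehat H}$ is $[\widehat H:H]$-to-one on vertices, and a based occurrence upstairs is uniquely determined by its image downstairs together with the basepoint upstairs, since liftings of immersions through covers are unique). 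Apart from this, the argument is a routine unwinding of definitions; the only other thing to be careful about is that the extension relation and all the identifications are genuinely $F_N$-equivariant, which is immediate from the constructions in the excerpt. Alternatively, one can bypass Proposition \ref{gH} and instead cite directly the results of \cite[Subsections 4.2 and 4.3]{KN13} that compute $\eta_H$ on subset cylinders in terms of occurrences; but I prefer the self-contained route above since all needed ingredients (Propositions \ref{gH} and \ref{scyl}, uniqueness of based occurrences) are already available.
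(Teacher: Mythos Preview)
Your argument is correct and in fact more detailed than what the paper does: the paper gives no proof of this lemma at all, simply stating that it is ``a direct corollary from Section 4.2 and Section 4.3 in \cite{KN13}.'' You have instead assembled a self-contained argument from Propositions~\ref{gH} and~\ref{scyl} and the covering $q_H\colon X_H\to\gD_H$, which is exactly the route the referenced sections of \cite{KN13} take (translate $\eta_H(\SCyl(T))$ into a count of cosets $gH$ with $g^{-1}T\underset{\mathrm{ext}}{\subset} X_H$, then identify these with vertices of $\gD_H$ admitting a based occurrence). The bijection you describe is correct; injectivity needs only the basepoint comparison since based occurrences into a folded graph are unique, and surjectivity follows from lifting an occurrence $f$ through the covering $q_H$ and recognizing the lift as left translation by $g^{-1}$ via uniqueness of lifts to $X$.

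One comment: your anticipated ``obstacle'' for $H\neq\widehat H$ is not actually an obstacle. Proposition~\ref{gH} already gives $\eta_H=\sum_{gH\in F_N/H}\delta_{g\Lambda(H)}$ for \emph{arbitrary} $H\in\Sub(F_N)$, and your main argument uses only this together with $\gD_H=H\backslash X_H$; nowhere do you need $H=\mathrm{Comm}_{F_N}(H)$. So the commensurator case analysis you sketch (comparing $\#\mathrm{Occ}(T,\gD_H)$ with $[\widehat H:H]\cdot\#\mathrm{Occ}(T,\gD_{\widehat H})$) is a consistency check rather than a missing step. You could simply delete that paragraph. Also, $q_H$ is a genuine covering map (not just ``in the interior''): $H$ acts freely on $X_H$, and $X_H$ has no degree-one vertices because it is the convex hull of a limit set.
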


\subsection{The reduced rank functional $\rk$}
Recall that the \ti{rank} $\tn{rank}(F)$ of a finitely generated free group $F$ is the cardinality of a free basis of $F$, and the \ti{reduced rank} $\rk (F)$ is defined as 
\[ \rk (F):=\tn{max}\{ \tn{rank}(F)-1,0\}.\]
If $\tn{rank}(F)\geq 1$, then $\rk (F)=\tn{rk}(F)-1$, and for a finite connected graph $\gD$ whose fundamental group is isomorphic to $F$ we have $\rk(F)=-\chi (\gD)$, where $\chi (\gD):=\# V(\gD)-\# E_{\tn{top}}(\gD)$ is the Euler characteristic of $\gD$.

\begin{theorem}[cf. \cite{KN13}, Theorem 8.1]\label{thm:reduced rank}
There exists a unique continuous $\RRR$-linear functional
\[ \rk \colon \SCurr (F_N)\rightarrow \RRR\]
such that for every $H\in \Sub (F_N)$ we have 
\[ \rk (\eta_H )=\rk (H).\]
Moreover, $\rk$ is $\tn{Out}(F_N)$-invariant.
\end{theorem}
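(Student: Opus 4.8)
The plan is to reduce the existence and continuity of $\rk$ to the explicit combinatorial formula for $\eta_H(\SCyl(T))$ provided by Lemma \ref{scyl,occ}, and then to exploit the characterization of the weak-$*$ topology in Proposition \ref{prop:Scyl is conti}(1). The starting observation is that for a finitely generated $H\leq F_N$ with limit set $\Lambda(H)$ and $R_N$-core graph $\gD_H = H\backslash X_H$, one has $\rk(H) = -\chi(\gD_H)$ when $\gD_H$ is nonempty (i.e. when $H$ is non-cyclic), and $\rk(H)=0$ otherwise; this is because $\gD_H$ is a finite connected graph with $\pi_1(\gD_H)\cong H$. So the task is to express $-\chi(\gD_H) = \#E_{\tn{top}}(\gD_H) - \#V(\gD_H)$ as a (finite, signed) combination of the quantities $\eta_H(\SCyl(T)) = \#\mathrm{Occ}(T,\gD_H)$ for suitably chosen small trees $T\in\Sub(X,\id)$.

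First I would pin down which trees to use. Counting vertices of $\gD_H$: for each vertex $v\in V(\gD_H)$ there is a unique based occurrence into $(\gD_H,v)$ of the trivial-looking tree; more usefully, summing $\#\mathrm{Occ}$ over a family of ``stars'' detects vertices together with local degree data, and summing over ``single-edge'' trees detects edges. Concretely, for a label $a\in A$ let $T_a$ be the edge of $X$ from $\id$ to $a$ (based at $\id$); then an occurrence of $T_a$ in $\gD_H$ picks out an oriented edge of $\gD_H$ with label $a$, so $\sum_{a\in A}\#\mathrm{Occ}(T_a,\gD_H)$ counts $\#E_{\tn{top}}(\gD_H)$ up to a fixed bookkeeping factor coming from orientations and from the definition of occurrences (one must be careful that occurrences of a single edge need not be ``locally homeomorphic in the interior'' constraints since the interior is empty, so every label-$a$ edge contributes). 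For vertices, one uses trees that are stars $S_{v}$ of each possible vertex-type: since $\gD_H$ is folded with no degree-$0$ or degree-$1$ vertices and all degrees are at most $2N$, there are finitely many isomorphism types of based stars, and a based occurrence of the star at a vertex of a given type exists iff that vertex realizes exactly that type; an inclusion–exclusion over star types then yields $\#V(\gD_H)$ as a finite integer combination of $\#\mathrm{Occ}(\cdot,\gD_H)$. Assembling these gives a fixed finite family $T_1,\dots,T_r\in\Sub(X,\id)$ and fixed integers $c_1,\dots,c_r$ with $-\chi(\gD_H)=\sum_i c_i\,\eta_H(\SCyl(T_i))$ for every non-cyclic $H$, and one checks the cyclic/ trivial case by hand (both sides are $0$ there, since $\gD_H$ is then a single circle with $\chi=0$, or empty).

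With that formula in hand the functional is forced: define $\rk(\mu):=\sum_i c_i\,\mu(\SCyl(T_i))$. By Proposition \ref{prop:Scyl is conti}(2) each $\mu\mapsto\mu(\SCyl(T_i))$ is continuous and $\RRR$-linear, hence so is the finite combination $\rk$; and by construction $\rk(\eta_H)=\rk(H)$ for all $H\in\Sub(F_N)$, with $\rk(0)=0$ covering the trivial subgroup. Uniqueness is immediate: two continuous functionals agreeing on all rational currents $r\eta_H$ agree on a dense subset (the rational currents are dense), hence everywhere. Finally, $\tn{Out}(F_N)$-invariance follows because $\tn{Out}(F_N)$ acts continuously and $\RRR$-linearly on $\SCurr(F_N)$ and sends $\eta_H\mapsto\eta_{\varphi(H)}$, so $\mu\mapsto\rk(\varphi^{-1}\mu)$ is another continuous $\RRR$-linear functional taking the value $\rk(\varphi^{-1}(\varphi(H)))=\rk(H)$ on each $\eta_{\varphi(H)}$, i.e. agreeing with $\rk$ on a dense set, hence equal to $\rk$.

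I expect the main obstacle to be the bookkeeping in the previous paragraph: getting the signed combination over star-types exactly right so that the vertex count and edge count combine into $-\chi$ uniformly across all $H$, and in particular handling the boundary cases (low-degree vertices can't occur by the core-graph hypothesis, but one must make sure the chosen trees $T_i$ really do have existing based occurrences precisely when intended, which hinges on the ``locally homeomorphic in the interior'' condition in the definition of occurrence and on foldedness of $\gD_H$). None of this is deep, but it is the step where an off-by-one or a missed orientation convention would break the argument, so it deserves the most care.
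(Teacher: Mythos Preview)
Your approach is essentially the paper's: write $\rk(H)=-\chi(\gD_H)=\#E_{\tn{top}}(\gD_H)-\#V(\gD_H)$, realize the edge count as $\sum_{a\in A}\eta_H(\SCyl(e_a))$ and the vertex count as $\sum_{T\in\R_1}\eta_H(\SCyl(T))$ via Lemma~\ref{scyl,occ}, then set $\rk:=E-V$ and invoke Proposition~\ref{prop:Scyl is conti}(2) for continuity and linearity, and density of rational currents for uniqueness and $\tn{Out}(F_N)$-invariance.

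Two small points. First, no inclusion--exclusion is needed for the vertex count: you yourself note that a based occurrence of a grade-$1$ round graph (star) $T\in\R_1$ at $v\in V(\gD_H)$ exists iff $v$ has \emph{exactly} that star type, because occurrences are locally homeomorphic in the interior; hence the $\SCyl(T)$ for $T\in\R_1$ are pairwise disjoint and a plain sum over $\R_1$ already gives $\#V(\gD_H)$. All coefficients $c_i$ are therefore $\pm 1$, and your anticipated ``main obstacle'' evaporates. Second, you omit one thing the paper checks: the codomain is $\RRR=\mathbb{R}_{\geq 0}$, so you must verify $\rk(\mu)\geq 0$ for all $\mu$. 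This is immediate once the rest is in place---$\rk(r\eta_H)=r\,\rk(H)\geq 0$ on the dense set of rational currents, hence $\rk\geq 0$ everywhere by continuity---but it should be said. (Also, your parenthetical ``$\gD_H$ is nonempty, i.e.\ $H$ is non-cyclic'' is a slip: $\gD_H$ is nonempty for every nontrivial $H$; for infinite cyclic $H$ it is a circle with $\chi=0$, as you correctly use a line later.)
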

The functional $\rk$ is called the \ti{reduced rank functional}.
In order to prove Theorem \ref{thm:Neumann product} we will use a method used in the proof of \cite[Theorem 8.1]{KN13}.
We give a proof of Theorem \ref{thm:reduced rank} following the argument in \cite{KN13} almost step by step in the remaining part of this subsection.

Note that for $H\in \Sub (F_N)$ we have $\tn{rank}(H)\geq 1$, and so
\[ \rk (H)=\# E_{\tn{top}}(\gD_H )- \# V(\gD_H ).\]
We extend each term of the right hand side of this equation to a continuous $\RRR$-linear functional on $\SCurr (F_N)$, namely construct continuous $\RRR$-linear functionals
\[ E, V\: \SCurr (F_N)  \rightarrow  \RRR \]
such that $E(\eta_H)= \# E_{\tn{top}}(\gD_H)$ and $V(\eta_H )=\# V(\gD_ H)$ for $H\in \Sub (F_N)$. 

Let $e_a$ be the topological edge in $X$ with the boundary $\{\id ,a\} \ (a \in A)$. Let $H\in \Sub (F_N)$. By considering $e_a$ as a subtree of $X$ we have $(e_a,\id )\in \Sub (X,\id )$. Then $\eta_H (\SCyl (e_a))=\# \mathrm{Occ} (e_a, \gD_H)$ coincides with the number of topological edges of $\gD_H$ with the label $a$. From this we define the map

\[ E\:  \SCurr (F_N)  \rightarrow  \RRR ;\ 
	 \mu  \mapsto \sum_{a\in A} \mu (\SCyl (e_a)).\]
Then we have $E(\eta_H)=\# E_{\tn{top}}(\gD_H)$ for $H\in \Sub (F_N)$, and $E$ is continuous and $\RRR $-linear from Proposition \ref{prop:Scyl is conti}.

In the case of $V$, we need the following definition.

\begin{definition}[Round graphs. cf. \cite{Kap13}, Definition 3.6]\label{rem:r-neighborhood}{\em 
For an integer $r\geq 1$, we say that $T\in \Sub (X)$ is a \ti{round graph} of \ti{grade} $r$ in $X$ if there exists a (necessarily unique) vertex $v$ of $T$ such that for every degree-one vertex $u$ of $T$ we have $d_X(v, u)=r$. We call a pair $(T,v)$ a based round graph of grade $r$ if $v$ satisfies the above condition, where the degree of $v$ have to be more than one. Let $\R_r $ denote the set of all based round graphs of grade $r$ with the base point $\id$. Thus $\R_r$ is a subset of $\Sub (X,\id)$.
}\end{definition}

\begin{remark}{\em
Fix a positive integer $r$. For any $R_N$-graph $\gD$ and $v\in V(\gD )$ there exists a unique $T_r(v)\in \R_r$ such that there exists a based occurrence $(T_r(v), \id )\rightarrow (\gD ,v)$. We can think of $T_r(v)$ as an $r$-\ti{neighborhood} of $v$ in $\gD$.

Let $v\in V(X)$ and $\rho \in \RRR$. Set $B(v ,\rho ):=\{ x\in X \ |\ d_X(v,x)\leq \rho \}$ the closed ball with radius $\rho$ and center $v$ in $X$. For $T\in \R_r$ and every $S\in \SCyl (T)$ we have $\mathrm{Conv}(S)\cap B(\id , r)=T$ from Proposition \ref{scyl}.
Therefore, if $T_1\not=T_2 $ for $T_1,T_2\in \R_r$, then $\SCyl (T_1)\cap \SCyl (T_2) =\emptyset $.
}\end{remark}

We define the map
\[ V\:  \SCurr (F_N)  \rightarrow  \RRR ;\ 
	 \mu  \mapsto \sum_{T\in \R_1} \mu (\SCyl (T))=\mu \left( \bigsqcup_{T\in \R_1}\SCyl (T)\right).\]
Then
\begin{align*}
 V(\eta_H )=&\sum_{T\in \R_1} \eta_H (\SCyl (T)) \\
	=&\sum_{T\in \R_1} \# \{ v\in V(\gD_H)\ |\ T_1(v)=T\} =\# V(\gD _H) 
\end{align*}
for $H\in \Sub (F_N)$, and $V$ is continuous and $\RRR$-linear. Note that for any positive integer $r$, we have
\[ \bigsqcup_{T\in \R_r}\SCyl (T)=\{ S\in \C_N \ |\ \mathrm{Conv}(S)\ni \id \}.\]

\begin{proof}[Proof of Theorem \ref{thm:reduced rank}]
Set $\rk =E-V$. Then for $H\in \Sub (F_N)$,
\[ \rk (\eta_H )=E(\eta_H )-V(\eta_H )=\# E(\gD_H )- \# V(\gD_H )=\rk (H).\]
Since $\rk (r\eta_H)=r\rk (H)\geq 0$ for any rational subset current $r \eta_H (r\geq 0, H\in \Sub (F_N))$, we have $\rk (\mu )\geq 0$ for any $\mu \in \SCurr (F_N)$. The uniqueness and $\tn{Out}(F_N)$-invariance of $\rk$ is obvious from the denseness of the rational subset currents in $\SCurr (F_N)$.
\end{proof}

\section{The intersection functional $\N$}\label{sec:product}
Define a product $\N (H,K)$ of two finitely generated subgroups $H$ and $K$ of $F_N$ as
\[ \N (H,K):=\sum_{HgK\in H\backslash F_N /K } \rk (H\cap gKg^{-1}), \]
where $H\backslash F_N /K$ is the set of all double cosets $HgK\ (g\in F_N)$. By this definition the Strengthened Hanna Neumann Conjecture (SHNC), which has been proven independently by Friedman and Mineyev, can be stated as follows:

\begin{theorem}[SHNC, see \cite{Fri11, Min12a}]\label{thm:Mineyev}
For any finitely generated subgroups $H,K\leq F_N$, the following inequality holds:
\[ \N (H,K) \leq \rk (H) \rk (K).\]
\end{theorem}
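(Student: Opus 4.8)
This is the Strengthened Hanna Neumann Conjecture, so any proof must invoke a genuinely deep input; the plan is to make the reduction to that input explicit in the language of the $R_N$-core graphs set up above, and then to indicate which external ingredient closes the argument and why it is the real obstacle.

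\emph{Reduction to fiber products.} By the Notation above, $\gD_H=H\backslash X_H$ and $\gD_K=K\backslash X_K$ are finite connected $R_N$-core graphs with $\pi_1(\gD_H)\cong H$ and $\pi_1(\gD_K)\cong K$, and since $H,K\in\Sub(F_N)$ we have $\rk(H)=-\chi(\gD_H)$ and $\rk(K)=-\chi(\gD_K)$. First I would form the \emph{fiber product} (pullback over $R_N$) $\gG:=\gD_H\times_{R_N}\gD_K$: its vertex set is $V(\gD_H)\times V(\gD_K)$ (the rose has a single vertex), it has one topological edge over each pair consisting of a topological edge of $\gD_H$ and one of $\gD_K$ carrying the same label in $A$, and it is again a finite folded $R_N$-graph. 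A standard covering-space and Bass--Serre argument (Stallings) identifies the non-contractible connected components of $\gG$, after passing to the core of each, with the finitely many double cosets $HgK$ for which $H\cap gKg^{-1}\not=\{\id\}$, the component attached to $HgK$ being $R_N$-isomorphic to $\gD_{H\cap gKg^{-1}}$ and so having $-\chi$ equal to $\rk(H\cap gKg^{-1})$; and coring alters neither $\pi_1$ nor $\chi$ of such a component. Therefore
\[
\N(H,K)\ =\ \sum_{C}\bigl(-\chi(C)\bigr)\ =\ -\,\chi\bigl(\tn{core}(\gG)\bigr),
\]
where $C$ runs over the non-contractible components of $\gG$; since $\rk(H)\rk(K)=\chi(\gD_H)\chi(\gD_K)$, the theorem reduces to the purely combinatorial inequality $-\chi(\tn{core}(\gG))\le\chi(\gD_H)\chi(\gD_K)$.

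\emph{The hard part.} The elementary vertex--edge count on $\gG$ does not by itself produce the sharp constant $1$: the count controls $-\chi(\gG)$, but $\gG$ may carry many contractible components, whose contributions to $-\chi(\tn{core}(\gG))$ have to be weighed against $-\chi(\gG)$, and it is exactly this balancing that forces the sharp estimate and constitutes the content of Theorem~\ref{thm:Mineyev}. I do not expect to be able to shortcut it by graph combinatorics; it is the genuine obstacle. I would therefore close the argument by importing one of the known proofs of this combinatorial core: Friedman's \cite{Fri11}, through a homological theory of sheaves on graphs together with a ``maximum excess'' invariant and an averaging over finite Galois covers; Mineyev's \cite{Min12a}, which (in Dicks's streamlined form) builds an $F_N$-equivariant order-type structure on the edge set of the Cayley tree $X$ and uses it to produce an injection that controls the excess of each non-contractible component of $\gG$; or the $L^2$-approach of Jaikin-Zapirain, which reinterprets $\rk$ as a first $L^2$-Betti number, writes $\N(H,K)$ as a von Neumann dimension over $\mathcal{N}(F_N)$, and derives the inequality from a K\"unneth-type estimate. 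Granting any one of these, the theorem follows immediately from the displayed reduction; the portion I would actually carry out in full is precisely that reduction to $-\chi(\tn{core}(\gG))\le\chi(\gD_H)\chi(\gD_K)$, with \cite{Fri11,Min12a} supplying the inequality itself.
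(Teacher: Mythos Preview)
The paper does not give its own proof of this statement: Theorem~\ref{thm:Mineyev} is simply recorded as the Strengthened Hanna Neumann Conjecture with citations to \cite{Fri11,Min12a}, and is then used as a black box to obtain Corollary~\ref{cor:SHNC}. Your proposal is entirely appropriate in that light: you correctly identify the statement as the SHNC, you write down the standard Stallings--Neumann reduction to the fiber product $\gD_H\times_{R_N}\gD_K$ (which the paper itself records immediately after the statement, in the form $\N(H,K)=-\sum_i\chi(\gG_i)$ over non-contractible components), and you explicitly defer the sharp inequality $-\chi(\tn{core}(\gG))\le\chi(\gD_H)\chi(\gD_K)$ to one of the known deep proofs. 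That is exactly how the paper treats the result, so there is nothing to compare and nothing missing.
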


In this section we give a proof of the following theorem, which is our main result.

\begin{theorem}\label{thm:Neumann product}
There exists a unique continuous symmetric $\RRR$-bilinear functional
\[ \N \: \SCurr (F_N)\times \SCurr (F_N)\rightarrow \RRR\]
such that for $H,K\in \Sub (F_N)$ we have
\[ \N (\eta_H ,\eta_K )= \N (H,K).\]
Moreover, $\N$ is $\tn{Out}(F_N)$-invariant, that is, for any $\varphi \in \tn{Out}(F_N)$ and $\mu ,\nu \in \SCurr (F_N)$ we have $\N (\varphi \mu ,\varphi \nu )=\N (\mu ,\nu )$.
\end{theorem}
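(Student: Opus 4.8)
The plan is to imitate the proof of Theorem~\ref{thm:reduced rank} one bilinear level up. By $\RRR$-linearity of $\rk$ and the definition of $\N(H,K)$, for $H,K\in\Sub(F_N)$,
\[ \N(H,K)=\rk\Bigl(\sum_{HgK\in H\backslash F_N/K}\eta_{H\cap gKg^{-1}}\Bigr)=\sum_{HgK}\bigl(\#E_{\tn{top}}(\gD_{H\cap gKg^{-1}})-\#V(\gD_{H\cap gKg^{-1}})\bigr), \]
with only finitely many nonzero terms. The disjoint union $\bigsqcup_{HgK}\gD_{H\cap gKg^{-1}}$ is the core of the fiber product graph $Z_{H,K}$ of $\gD_H$ and $\gD_K$ over $R_N$, whose topological edges are the pairs of equally labelled topological edges (one of $\gD_H$, one of $\gD_K$) and whose vertex set is $V(\gD_H)\times V(\gD_K)$; each $\gD_{H\cap gKg^{-1}}$ is the core of a non-contractible component of $Z_{H,K}$. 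So I would build $\N$ by realizing both sums on the right as values of continuous symmetric $\RRR$-bilinear functionals assembled from cylinder measures and occurrences, just as $E$ and $V$ were in the proof of Theorem~\ref{thm:reduced rank}. Once $\N$ is constructed, uniqueness and $\tn{Out}(F_N)$-invariance are formal: rational currents are dense, $\varphi\eta_H=\eta_{\varphi(H)}$, and $\N(\varphi(H),\varphi(K))=\N(H,K)$ because an automorphism bijects double cosets and preserves reduced rank.

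The honestly bilinear part is immediate: since $\eta_H(\SCyl(e_a))$ counts the $a$-labelled topological edges of $\gD_H$ and $V(\eta_H)=\#V(\gD_H)$, the graph $Z_{H,K}$ has $\sum_{a\in A}\eta_H(\SCyl(e_a))\eta_K(\SCyl(e_a))$ topological edges and $V(\eta_H)V(\eta_K)$ vertices, so
\[ \N_0(\mu,\nu):=\sum_{a\in A}\mu(\SCyl(e_a))\,\nu(\SCyl(e_a))-V(\mu)\,V(\nu) \]
is a continuous (a finite combination of products of the continuous $\RRR$-linear functionals $\mu\mapsto\mu(\SCyl(e_a))$ and $V$, and $\SCurr(F_N)$ is metrizable) symmetric $\RRR$-bilinear functional with $\N_0(\eta_H,\eta_K)=-\chi(Z_{H,K})$. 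As $-\chi$ of a finite graph is $\#E_{\tn{top}}-\#V$ and $-\chi$ of a contractible component equals $-1$, we get $\N_0(\eta_H,\eta_K)=\N(H,K)-c(H,K)$, where $c(H,K)$ is the number of contractible components of $Z_{H,K}$. It remains to construct a continuous symmetric $\RRR$-bilinear functional $\mathcal C$ with $\mathcal C(\eta_H,\eta_K)=c(H,K)$ and set $\N:=\N_0+\mathcal C$.

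This is where occurrences must be used carefully. For $\rho\ge1$ let $S_\rho\subset\R_{\rho+1}\times\R_{\rho+1}$ be the set of pairs $(T,T')$ for which $T\cap_X T'$ (intersection of the two subtrees of $X$ based at $\id$) has no vertex at distance $\rho+1$ from $\id$ --- hence is a finite tree --- of radius $\rho$ with $\id$ at its center, and put
\[ n_\rho(\mu,\nu):=\sum_{(T,T')\in S_\rho}w_\rho(T,T')\,\mu(\SCyl(T))\,\nu(\SCyl(T')), \]
where $w_\rho(T,T')\in\{1,\tfrac12\}$ shares out a unit among the one or two root positions realizing the center. By Lemma~\ref{scyl,occ}, for a vertex $(v,w)$ of $Z_{H,K}$ the pair $(T_{\rho+1}(v),T_{\rho+1}(w))$ lies in $S_\rho$ exactly when the connected component of $(v,w)$ is contractible of radius $\rho$ and $(v,w)$ is its center (one recovers that component rooted at $(v,w)$ from $T_{\rho+1}(v)\cap_X T_{\rho+1}(w)$, since a non-contractible component would admit arbitrarily long reduced paths), whence $n_\rho(\eta_H,\eta_K)=\#\{\text{contractible components of }Z_{H,K}\text{ of radius }\rho\}$ (an analogous term $n_0$ weighted by $1$ handles the isolated vertices). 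Each $n_\rho$ is continuous, being a supremum of finite sums of products of continuous functionals (lower semicontinuous) and also $V(\mu)V(\nu)$ minus such a supremum (upper semicontinuous). Since distinct contractible components are disjoint and one of radius $\rho$ has at least $2\rho$ vertices, $\sum_\rho 2\rho\,n_\rho(\eta_H,\eta_K)\le\#V(Z_{H,K})=V(\eta_H)V(\eta_K)$ on rational currents, hence, by lower semicontinuity and density, $\sum_\rho 2\rho\,n_\rho(\mu,\nu)\le V(\mu)V(\nu)$ everywhere. Now, with the \emph{smooth} weights $w_r(\rho):=1-e^{-r/\rho}$, set
\[ \mathcal C_r(\mu,\nu):=\sum_{\rho\ge1}w_r(\rho)\,n_\rho(\mu,\nu)+n_0(\mu,\nu),\qquad \mathcal C_r(\eta_H,\eta_K)=\#\{\text{isolated vertices}\}+\!\!\sum_{\text{contractible }C\subset Z_{H,K}}\!\!\bigl(1-e^{-r/\operatorname{rad}(C)}\bigr). \]
Each $\mathcal C_r$ is continuous (its tails $\sum_{\rho>P}w_r(\rho)n_\rho\le\frac{r}{P}\sum_\rho n_\rho\le\frac{r}{P}V(\mu)V(\nu)$ vanish uniformly on bounded sets), symmetric, $\RRR$-bilinear, and increases with $r$ to $c(H,K)$ on rational currents; I then define $\mathcal C:=\sup_r\mathcal C_r$, which is $\RRR$-bilinear and symmetric with $\mathcal C(\eta_H,\eta_K)=c(H,K)$.

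The crux --- and the step I expect to be the main obstacle --- is that $\mathcal C$ is genuinely continuous, not merely lower semicontinuous; equivalently, that $\mathcal C_r\to\mathcal C$ uniformly on compact subsets of $\SCurr(F_N)\times\SCurr(F_N)$. This is exactly what the smooth choice of weights buys:
\[ (\mathcal C-\mathcal C_r)(\mu,\nu)=\sum_{\rho\ge1}e^{-r/\rho}\,n_\rho(\mu,\nu)\le V(\mu)V(\nu)\cdot\max_{\rho\ge1}\frac{e^{-r/\rho}}{2\rho}\le\frac{V(\mu)V(\nu)}{2er}, \]
combining the ``budget'' inequality $\sum_\rho 2\rho\,n_\rho\le V(\mu)V(\nu)$ with the elementary fact that $\max_{x>0}xe^{-x}=e^{-1}$; the right side tends to $0$ uniformly on any set where $V(\mu)V(\nu)$ is bounded, so $\mathcal C$ is continuous on compact sets and hence, $\SCurr(F_N)$ being metrizable, continuous. (A crude thresholding of the weights would fail here, since there can be of order $V(\mu)V(\nu)/\rho$ components of radius $\rho$ and $\sum1/\rho$ diverges; finding the right scale-by-scale smoothing and pairing it with the budget inequality is the heart of the matter.) With $\N:=\N_0+\mathcal C$ now continuous, symmetric, $\RRR$-bilinear and satisfying $\N(\eta_H,\eta_K)=\N(H,K)$, the inequality $\N\le\rk\cdot\rk$ follows from the SHNC (Theorem~\ref{thm:Mineyev}), continuity and density, and uniqueness together with $\tn{Out}(F_N)$-invariance follow from denseness of rational currents as indicated, completing the proof. (One then expects $\N$ to coincide with $\rk\circ\widehat I$ of Theorems~\ref{thm:int} and~\ref{thm:rank int}, the exact but a priori non-continuous description.)
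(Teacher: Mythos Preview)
Your argument is correct and follows the same strategy as the paper: both decompose $\N$ via the fiber product graph as $\widehat{E}-\widehat{V}+\widehat{c}$, construct $\widehat{E}$ and $\widehat{V}$ bilinearly from cylinder counts, and prove continuity of the contractible-component term by a ``budget'' inequality bounding the total number of vertices appearing in contractible pieces by $V(\mu)V(\nu)$.

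The implementation of $\widehat{c}$ differs. The paper groups contractible components by their \emph{isomorphism type} $[T]\in\SSub(X,\id)/F_N$, using the sets $\R(T)=\{(S_1,S_2):\mathrm{Conv}(S_1)\cap\mathrm{Conv}(S_2)=T\}$ and Lemma~\ref{T_1,T_2}; this avoids your center-counting weights $w_\rho\in\{1,\tfrac12\}$. Continuity then follows directly from the tail estimate $\sum_{[T]:\#V(T)\ge L}\mu\times\nu(\R(T))\le V(\mu)V(\nu)/L$, which is an immediate consequence of $\sum_{[T]}\#V(T)\cdot\mu\times\nu(\R(T))\le V(\mu)V(\nu)$. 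Your exponential smoothing $\mathcal C_r$ and the passage $\mathcal C=\sup_r\mathcal C_r$ are an unnecessary detour: your own budget $\sum_\rho 2\rho\,n_\rho\le V(\mu)V(\nu)$ already gives $\sum_{\rho>L}n_\rho\le V(\mu)V(\nu)/(2L)$, so $\mathcal C=\sum_\rho n_\rho$ is a uniform-on-compacta limit of its continuous partial sums and hence continuous outright. Your worry that ``crude thresholding would fail because $\sum 1/\rho$ diverges'' relies only on the pointwise bound $n_\rho\lesssim V(\mu)V(\nu)/\rho$ and overlooks the stronger summed constraint you yourself established. The paper's organization is therefore shorter, but the underlying idea is the same.
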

We call $\N$ the \ti{intersection functional}.
By the definition of the product $\N$ and the $\RRR$-linearity of the reduced rank functional $\rk$, we have
\begin{align*}
\N (H,K)&=\sum_{HgK\in H\backslash F_N /K } \rk (H\cap gKg^{-1}) \\
	&=\rk \left( \sum_{HgK\in H\backslash F_N/K} \eta_{H\cap gKg^{-1}}\right).
\end{align*}
Kapovich and Nagnibeda asked whether there exists a continuous $\RRR$-bilinear map
\[ \pitchfork \: \SCurr (F_N)\times \SCurr (F_N)\to \SCurr (F_N)\]
such that 
\[ \pitchfork (\eta _H, \eta _K )=\sum_{HgK\in H\backslash F_N/K} \eta_{H\cap gKg^{-1}}\]
for any finitely generated subgroups $H,K\leq F_N$ (see \cite[Subsection 10.4]{KN13}). If such a map $\pitchfork$ exists, then Theorem \ref{thm:Neumann product} follows immediately, and moreover, $\pitchfork$ can be considered as a measure theoretical generalization of the construction of the fiber product graph $\gD_H \times_{R_N}\gD_K$ (see Definition \ref{def:fiber product graph} and the following argument).
However, by the following proposition we answer that there does not exists such a map $\pitchfork$. Nevertheless we construct a non-continuous $\RRR$-bilinear map $\widehat{I}\: \SCurr (F_N)\times \SCurr (F_N)\to \SCurr (F_N)$ with reasonable properties (see Section \ref{sec:intersection}).

\begin{proposition}\label{prop:not conti}
If an $\RRR$-bilinear map
\[ \pitchfork \: \SCurr (F_N)\times \SCurr (F_N)\to \SCurr (F_N)\]
satisfies the condition that
\[ \pitchfork (\eta _H, \eta _K )=\sum_{HgK\in H\backslash F_N/K} \eta_{H\cap gKg^{-1}}\]
for any finitely generated subgroups $H,K\leq F_N$, then $\pitchfork$ is not continuous.
\end{proposition}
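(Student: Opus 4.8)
The plan is to contradict continuity while keeping the second argument fixed, so that only continuity in the first variable is ever used. Write the fixed basis of $F_N$ as $A=\{a_1,\dots,a_N\}$ and abbreviate $a:=a_1$, $b:=a_2$. Put $K:=\langle b\rangle$ and, for each integer $n\geq 1$, $H_n:=\langle ab^n\rangle$ and $\mu_n:=\tfrac1n\eta_{H_n}\in\SCurr(F_N)$. I would establish: (i) $\mu_n\to\eta_K$ in $\SCurr(F_N)$; (ii) $\pitchfork(\mu_n,\eta_K)=0$ for every $n$; and (iii) $\pitchfork(\eta_K,\eta_K)=\eta_K\neq 0$. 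These give the conclusion at once: were $\pitchfork$ continuous, then $\pitchfork(\mu_n,\eta_K)\to\pitchfork(\eta_K,\eta_K)$ by (i), hence $0=\eta_K$ by (ii) and (iii); but Lemma~\ref{scyl,occ} gives $\eta_K(\SCyl(e_b))=\#\mathrm{Occ}(e_b,\gD_K)=1$ (the graph $\gD_K$ being a single $b$-labeled loop), a contradiction.

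Facts (ii) and (iii) are short. For (iii): since $b$ is not a proper power, the cyclic subgroup $\langle b\rangle$ is malnormal, so $\langle b\rangle\cap g\langle b\rangle g^{-1}=\{\id\}$ for every $g\notin\langle b\rangle$ and $\tn{Comm}_{F_N}(\langle b\rangle)=\langle b\rangle$; hence among all double cosets $\langle b\rangle g\langle b\rangle$ only $\langle b\rangle\cdot\id\cdot\langle b\rangle$ contributes a nontrivial intersection, and the hypothesis on $\pitchfork$ yields $\pitchfork(\eta_K,\eta_K)=\eta_{\langle b\rangle\cap\id K\id^{-1}}=\eta_K$, which is nonzero. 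For (ii): I claim $H_n\cap gKg^{-1}=\{\id\}$ for every $g\in F_N$. A nontrivial element of this intersection would give $(ab^n)^k=gb^mg^{-1}$ with $k,m\neq 0$, so the cyclically reduced words representing $(ab^n)^k$ and $b^m$ would be cyclic permutations of one another; this is impossible, since a cyclically reduced form of $(ab^n)^k$ contains the letter $a$ exactly $|k|\geq 1$ times while $b^m$ contains it $0$ times. Thus every term of $\sum_{H_ngK}\eta_{H_n\cap gKg^{-1}}$ equals $\eta_{\{\id\}}=0$, so $\pitchfork(\eta_{H_n},\eta_K)=0$, and $\pitchfork(\mu_n,\eta_K)=\tfrac1n\pitchfork(\eta_{H_n},\eta_K)=0$ by the $\RRR$-bilinearity of $\pitchfork$ applied to the nonnegative scalar $\tfrac1n$.

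The genuinely substantive point — and the step I expect to take real work — is (i). By Proposition~\ref{prop:Scyl is conti} it suffices to prove $\mu_n(\SCyl(T))\to\eta_K(\SCyl(T))$ for every $T\in\Sub(X)$; by $F_N$-invariance of subset currents I may assume $T\in\Sub(X,\id)$, and then Lemma~\ref{scyl,occ} turns the claim into $\tfrac1n\#\mathrm{Occ}(T,\gD_{H_n})\to\#\mathrm{Occ}(T,\gD_K)$. Here $\gD_{H_n}$ is the oriented cycle of length $n+1$ with label word $ab^n$ and $\gD_K$ is the loop labeled $b$; since every vertex of $\gD_{H_n}$, and the vertex of $\gD_K$, has degree $2$, an occurrence of $T$ in either graph forces every interior vertex of $T$ to have degree $2$, i.e.\ forces $T$ to be a segment, and otherwise both occurrence numbers are $0$. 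If $T$ is a segment all of whose edges are labeled $b$, a direct count on the cycle shows $\#\mathrm{Occ}(T,\gD_{H_n})=n+1-c_T$ for all large $n$, where $c_T$ depends only on $T$, while $\#\mathrm{Occ}(T,\gD_K)=1$. If $T$ is a segment containing an edge labeled $a$, then for all large $n$ the fixed label word of $T$ is too short to wrap around the cycle $\gD_{H_n}$, whence $\#\mathrm{Occ}(T,\gD_{H_n})$ is bounded by a constant depending only on $T$, while $\#\mathrm{Occ}(T,\gD_K)=0$. In all cases $\tfrac1n\#\mathrm{Occ}(T,\gD_{H_n})\to\#\mathrm{Occ}(T,\gD_K)$, proving (i). All other steps are immediate; note in particular that joint continuity of $\pitchfork$ is not needed, only continuity of $\pitchfork(\,\cdot\,,\eta_K)$.
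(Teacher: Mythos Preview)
Your proof is correct and follows essentially the same approach as the paper: exhibit a sequence of rational subset currents $\tfrac1n\eta_{H_n}$ converging to $\eta_K$ for a cyclic $K$, with $\pitchfork(\eta_{H_n},\eta_K)=0$ but $\pitchfork(\eta_K,\eta_K)=\eta_K$. The only cosmetic difference is that the paper uses $H_n=\langle a_1^n a_2\rangle\to\langle a_1\rangle$ while you use $\langle a_1 a_2^n\rangle\to\langle a_2\rangle$; your write-up supplies considerably more detail for the convergence (i) and for (ii), (iii) than the paper, which dispatches all three in a sentence each.
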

\begin{proof}
Recall that $A=\{ a_1,\dots ,a_N\}$ is a free basis of $F_N$.
Set subgroups $H_n:=\langle a_1^{n} a_2 \rangle \ (n=1,2,\dots )$ and $H:= \langle a_1 \rangle $. Then from Proposition \ref{prop:Scyl is conti} and Lemma \ref{scyl,occ} we can see 
\[ \frac{1}{n}\eta_{H_n}\rightarrow \eta_{H}\ (n\rightarrow \infty ).\]
We also have
\[ \pitchfork (\eta_{H_n}, \eta_{H})=0 \ (n=1,2,\dots )\ \tn{and}\ \pitchfork(\eta_H ,\eta_H )=\eta_H , \]
which implies that $\pitchfork(\frac{1}{n}\eta_{H_n},\eta_H)=\frac{1}{n}\pitchfork (\eta_{H_n}, \eta_{H})$ does not converges to $\pitchfork(\eta_H ,\eta_H )$. Therefore, $\pitchfork$ is not continuous.
\end{proof}

To prove Theorem \ref{thm:Neumann product} we use the following graph theoretical description of $\N$. 
First, we recall the definition of the \ti{fiber product graph} in \cite{Sta83}. Since by a graph we mean a $0$ or $1$-dimensional CW complex, we rearrange the definition.

\begin{definition}[cf. \cite{Sta83}]\label{def:fiber product graph}{\em
Let $(\gD_1,\tau_1),(\gD_2,\tau_2)$ be $R_N$-graphs. The \ti{fiber product graph} $\gD_1\times_{R_N}\gD_2$ corresponding to $(\gD_1,\tau_1)$ and $(\gD_2 ,\tau_2)$ is the fiber product of $(\gD_1,\tau_1)$ and $(\gD_2 ,\tau_2)$ in the category of topological spaces with a graph structure induced by $(\gD_1,\tau_1)$ and $(\gD_2 ,\tau_2)$. Explicitly, 
\begin{align*}
	\gD_1\times_{R_N}\gD_2 &=\{ (x_1,x_2)\in \gD_1 \times \gD_2 \ |\ \tau_1 (x_1)=\tau_2 (x_2)\};\\
	V(\gD_{1}\times _{R_N}\gD_{2})&=\{ (v_1,v_2)\in V(\gD_1 )\times V(\gD_2 )\}; \\
	E_{\tn{top}}(\gD_{1}\times _{R_N}\gD_{2})&=\{ (e_1,e_2)\in E_{\tn{top}}(\gD_1 )\times E_{\tn{top}}(\gD_2 )\ |\ \tau_1 (e_1)=\tau_2 (e_2)\}. 
\end{align*}
Here, $V(\gD_{1}\times _{R_N}\gD_{2})$ is given as above because $R_N$ has only one vertex. For $(e_1,e_2)\in E_{\tn{top}}(\gD_1\times_{R_N}\gD_2)$, fix orientations of $e_1, e_2$ such that $\tau_1(e_1)=\tau_2(e_2)$ in $E(R_N)$. Then the boundary of $(e_1,e_2)$ is $\{ (o(e_1),o(e_2)), (t(e_1),t(e_2))\}$, which does not depend on the choice of the orientations of $e_1$ and $e_2$. 

There are natural graph morphisms $\phi_i\: \gD_1\times_{R_N}\gD_2 \to \gD_i, (x_1, x_2)\to x_i \ (i=1,2)$, and the graph morphism $\phi_i$ induces
\begin{align*}
\phi_i &\:V(\gD_1\times_{R_N}\gD_2 )\to V(\gD_i );\ (v_1,v_2)\mapsto v_i,\\
\phi_i &\:E_{\tn{top}}(\gD_1\times_{R_N}\gD_2 )\to E_{\tn{top}}(\gD_i );\ (e_1,e_2)\mapsto e_i.
\end{align*}

The fiber product graph $\gD_1\times_{R_N}\gD_2$ becomes an $R_N$-graph by the map $\tau_1\circ \phi_1(=\tau_2\circ \phi_2)$. If $(\gD_1,\tau_1)$ and $(\gD_2,\tau_2)$ are folded $R_N$-graphs, then so is $(\gD_1\times_{R_N}\gD_2, \tau_1\circ \phi_1 )$. However, $\gD_1\times_{R_N}\gD_2$ may not be connected and may have degree-zero and degree-one vertices even if $\gD_1$ and $\gD_2$ are connected $R_N$-core graphs.
}\end{definition}

Let $H,K\in \Sub  (F_N)$. In the context of the SHNC it has been proven that every non-zero term in the sum of $\N (H,K)$ corresponds to a non-contractible component of $\gD_H \times_{R_N}\gD_K$, and the following equality holds (see \cite{Neu90}):
\[ \N (H,K)=-\sum _{i=1}^k \chi (\gG_i ) ,\]
where $\gG_1,\dots ,\gG_k$ are all the non-contractible components of $\gD_H \times_{R_N} \gD_K$.
Using this description of $\N$ our strategy of proving Theorem \ref{thm:Neumann product} is the same as that of Theorem \ref{thm:reduced rank}.
We denote by $c(\gD_{H}\times _{R_N}\gD_{K})$ the number of contractible components of $\gD_{H}\times _{R_N}\gD_{K}$. Since the Euler characteristic of a contractible component is $1$, we have 
\[ \N (H,K)=\# E_{\tn{top}}(\gD_{H}\times _{R_N}\gD_{K})-\# V(\gD_{H}\times _{R_N}\gD_{K}) +c(\gD_{H}\times _{R_N}\gD_{K}).\]
We extend each term of the right hand side of this equation to a continuous symmetric $\RRR $-bilinear functional on $\SCurr (F_N)$.

Note that for $T_1,T_2\in \Sub (X)$ we have a continuous $\RRR $-bilinear functional
\[ \SCurr (F_N)\times \SCurr (F_N)\rightarrow \RRR ;\ (\mu ,\nu )\mapsto \mu(\SCyl (T_1))\nu (\SCyl (T_2)),\]
which plays a fundamental role in constructing the intersection functional $\N$.

Since $\# V(\gD_{H}\times _{R_N}\gD_{K})=\# V(\gD_{H})\# V(\gD_{K})$, we define the continuous $\RRR$-bilinear map
\[ \widehat{V} \: \SCurr (F_N)\times \SCurr (F_N)\rightarrow \RRR \]
by
\begin{align*}
 \widehat{V} (\mu, \nu )
	:=&V(\mu )V(\nu ) \\
	=&\sum_{T\in \R_1} \mu (\SCyl (T))\sum_{T\in \R_1} \nu (\SCyl (T))\\
	=&\sum_{T,T'\in \R_1} \mu (\SCyl (T))\nu (\SCyl (T)).
\end{align*}
Then we have $\widehat{V}(\eta_H, \eta_K)=\# V(\gD_{H}\times _{R_N}\gD_{K})$.

In the case of $\# E_{\tn{top}}(\gD_H \times_{R_N}\gD_K)$, for each $a\in A$ the number of topological edges of $\gD_{H}\times _{R_N}\gD_{K}$ with the label $a$ equals to the product of the number of topological edges of $\gD_H$ with the label $a$ and that of $\gD_K$ with the label $a$. Therefore, we have
\[ \# E_{\tn{top}}(\gD_{H}\times _{R_N}\gD_{K})=\sum_{a\in A}\eta_H (\SCyl (e_a))\eta_H (\SCyl (e_a)).\]
Now, we define the continuous $\RRR$-bilinear map
\[ \widehat{E} \: \SCurr (F_N)\times \SCurr (F_N)\rightarrow \RRR \]
by
\[ \widehat{E}(\mu ,\nu) :=\sum_{a\in A}\mu (\SCyl (e_a))\nu (\SCyl (e_a)).\]
Then we have $\widehat{E}(\eta_H, \eta_K)=\# E_{\tn{top}}(\gD_{H}\times _{R_N}\gD_{K})$.

In the remaining part of this section we construct a continuous $\RRR$-bilinear functional 
\[\widehat{c}\: \SCurr (F_N)\times \SCurr (F_N)\to \RRR\]
such that $\widehat{c}(\eta_H,\eta_K)=c(\gD_{H}\times _{R_N}\gD_{K})$.

Set $\SSub (X,\id ):=\Sub (X,\id )\cup \{ \{ \id\} \}$, where $\{\id \}$ is regarded as the subtree of $X$ consisting of one vertex $\id$.

To construct $\widehat{c}$ we use the following lemmas.
First one is obvious from the definition of the fiber product graph.

\begin{lemma}\label{lem:occ and int}
Let $(\gD_1,\tau_1),(\gD_2,\tau_2)$ be $R_N$-core graphs and $v_i \in V(\gD_i)\ (i=1,2)$. 
Let $\gG$ be the connected component of $\gD_1\times_{R_N}\gD_2$ containing the vertex $(v_1,v_2)$. Let $f_i\: (T_i,\id )\to (\gD_i ,v_i)\ (T_i \in \Sub (X, \id))$ be a based occurrence $(i=1,2)$, and set $T=T_1\cap T_2$. Then we have an $R_N$-graph morphism
\[ f\: T\to \gD_1 \times_{R_N} \gD_2 ;\ x\mapsto (f_1(x),f_2(x)),\]
and $f$ is locally homeomorphic in the interior.
\end{lemma}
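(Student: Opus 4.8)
The plan is to verify, one after another, three things: that $T:=T_1\cap T_2$ is an admissible object of $\SSub(X,\id)$; that the rule $x\mapsto (f_1(x),f_2(x))$ really does define an $R_N$-graph morphism into $\gD_1\times_{R_N}\gD_2$, with image inside $\gG$; and finally that this morphism is locally homeomorphic in the interior. Only the last step uses the occurrence hypotheses on $f_1$ and $f_2$; the first two are bookkeeping straight from the definition of the fiber product graph, which is why the lemma is essentially immediate.

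First I would note that, $X$ being a tree, the intersection of the subtrees $T_1$ and $T_2$ is again a subtree, and it contains $\id$ since both $T_i$ do; hence $T\in\Sub(X,\id)$, unless $T=\{\id\}$, a possibility for which $\SSub(X,\id)$ was introduced and in which case the local-homeomorphy assertion is vacuous. Next, since each $f_i$ is an $R_N$-graph morphism we have $\tau_i\circ f_i=q|_{T_i}$, so for every $x\in T$ the equality $\tau_1(f_1(x))=q(x)=\tau_2(f_2(x))$ holds, which is precisely the membership condition for $(f_1(x),f_2(x))$ in $\gD_1\times_{R_N}\gD_2$. Vertices clearly go to vertices; an edge $e$ of $T$ is an edge of $X$ lying in both $T_i$, each $f_i$ carries it isometrically onto an edge $e_i$ of $\gD_i$ sharing the label $q(e)$, so $(e_1,e_2)$ is a topological edge of the fiber product and, on the interior of $e$, $f$ is the pairing of two orientation-matched isometries, hence an isometry onto the interior of $(e_1,e_2)$. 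That $\tau_1\circ\phi_1\circ f=\tau_1\circ f_1=q|_T$ makes $f$ an $R_N$-graph morphism, and connectedness of $T$ together with $f(\id)=(v_1,v_2)$ forces the image inside $\gG$.

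The substantive step is the last one. I would take $v\in V(T)$ with $\deg_T(v)>1$; then $\deg_{T_i}(v)\ge\deg_T(v)>1$, so $v$ is an interior vertex of each $T_i$, and the occurrence property gives $\deg_{T_i}(v)=\deg_{\gD_i}(f_i(v))$. Using that all graphs in sight are folded, I would encode the local picture at $v$ by the set $\Sigma_i\subset E(R_N)$ of labels of oriented edges of $T_i$ leaving $v$, and at $f_i(v)$ by the analogous set $D_i\subset E(R_N)$ in $\gD_i$; label-preservation of $f_i$ gives $\Sigma_i\subset D_i$, and the degree equality just obtained promotes this to $\Sigma_i=D_i$. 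Since $X\to R_N$ is a covering, an oriented edge of $X$ at $v$ lies in $T=T_1\cap T_2$ exactly when its label lies in $\Sigma_1\cap\Sigma_2$, so $\deg_T(v)=\#(\Sigma_1\cap\Sigma_2)$, whereas the description of the edge set of a fiber product gives $\deg_{\gD_1\times_{R_N}\gD_2}(f(v))=\#(D_1\cap D_2)$; the two counts agree because $\Sigma_i=D_i$, and $\deg_{\gG}(f(v))$ equals the latter since $\gG$ is a union of components. This is exactly local homeomorphy in the interior.

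I do not expect a real obstacle: the content is genuinely "unwind the definitions." The one place calling for care is the degree bookkeeping — one must invoke the occurrence hypothesis for both $f_1$ and $f_2$ to upgrade the a priori inclusions $\Sigma_i\subset D_i$ to equalities, since without them one would only get $\deg_T(v)\le\deg_{\gD_1\times_{R_N}\gD_2}(f(v))$ — and one must check the orientation matching so that the pairing of the two edge-isometries really is an isometry onto a single edge of the fiber product.
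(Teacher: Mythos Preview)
Your argument is correct. The paper does not give a proof of this lemma at all, merely remarking that it ``is obvious from the definition of the fiber product graph''; your write-up is precisely the straightforward unwinding of that definition, with the degree count via label sets $\Sigma_i=D_i$ being the clean way to see the local-homeomorphy claim.
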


\begin{lemma}\label{lem:contra}
Let $T\in \SSub (X,\id )$ such that $T\subset B(\id ,r)$ for an integer $r\geq 0$. Let $T_1,T_2 \in \R_{r+1}$ with $T_1\cap T_2=T$. Let $(\gD_1,\tau_1),(\gD_2,\tau_2)$ be $R_N$-core graphs and $v_i$ a vertex of $\gD_i \ (i=1,2)$.
If there are based occurrences $f_i\: (T_i ,\id )\rightarrow (\gD_i,v_i)\ (i=1,2)$, then there exists an $R_N$-graph isomorphism from $T$ to the connected component $\gG$ of $\gD_1\times_{R_N}\gD_2$ containing the vertex $(v_1,v_2)$. In particular, $\gG$ is contractible.
\end{lemma}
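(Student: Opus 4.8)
The plan is to show that the map $f\colon T \to \gD_1 \times_{R_N} \gD_2$ of Lemma \ref{lem:occ and int} (defined by $x \mapsto (f_1(x), f_2(x))$, sending the base vertex $\id$ of $T$ to $(v_1,v_2)$) is in fact a graph isomorphism onto the component $\gG$ containing $(v_1,v_2)$. Since $T$ is a finite subtree of $X$, it is contractible, so establishing this isomorphism immediately yields the ``in particular'' assertion. By Lemma \ref{lem:occ and int} the map $f$ is already an $R_N$-graph morphism that is locally homeomorphic in the interior; what remains is to prove that $f$ is injective and that its image is all of $\gG$. Injectivity is easy: if $f(x) = f(x')$ then in particular $f_1(x) = f_1(x')$, and since $f_1$ is the restriction to $T_1 \supset T$ of the universal covering projection $q_{H_1}$ composed with an occurrence — more precisely, since $f_1$ is a based occurrence of $T_1$ and $T \subset B(\id,r) \subset $ the interior of $T_1$ (because $T_1 \in \R_{r+1}$ has all its degree-one vertices at distance $r+1$, hence every vertex at distance $\le r$ is interior), $f_1$ is locally homeomorphic at every vertex of $T$; a locally injective graph morphism from a tree is globally injective, so $f_1|_T$ is injective, whence so is $f$.

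The crux is showing $f(T) = \gG$, i.e. that $f$ is surjective onto the whole connected component. The plan is to prove that $f(T)$ is a union of connected components of $\gD_1 \times_{R_N} \gD_2$; since $f(T)$ is connected and contains $(v_1,v_2)$, it must then equal $\gG$. To see that $f(T)$ is a union of components it suffices to show $f(T)$ is both open and closed, equivalently that for every vertex $w = f(x)$ with $x \in V(T)$, \emph{every} edge of $\gD_1\times_{R_N}\gD_2$ incident to $w$ lies in $f(T)$. Fix such $x \in V(T)$. The key point is the case distinction on whether $x$ is an interior vertex of $T$ or a degree-one (leaf) vertex of $T$. If $x$ is interior in $T$, then since $f$ is locally homeomorphic in the interior, the degree of $x$ in $T$ equals the degree of $w$ in $\gD_1 \times_{R_N} \gD_2$, so all edges at $w$ are already images of edges at $x$ in $T$; nothing to do. If $x$ is a leaf of $T = T_1 \cap T_2$, the essential observation is that $d_X(\id, x) \le r$ (since $T \subset B(\id,r)$), so $x$ is an \emph{interior} vertex of both $T_1$ and $T_2$ (both in $\R_{r+1}$), hence $f_1$ is a local homeomorphism at $x$ and $f_2$ is a local homeomorphism at $x$; thus every edge of $\gD_1$ at $v_1' := f_1(x)$ is $f_1$ of a unique edge $e_1$ of $T_1$ at $x$, and similarly for $\gD_2$. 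Now take any edge $(\bar e_1, \bar e_2)$ of $\gD_1 \times_{R_N} \gD_2$ at $w = (f_1(x), f_2(x))$; it has some label $a \in A$, and by the local homeomorphism property it is the image under $(\phi_1,\phi_2)$-lifting of a unique edge $e_1$ of $T_1$ at $x$ with label $a$ and a unique edge $e_2$ of $T_2$ at $x$ with label $a$. Since $T_1, T_2 \subset X$ and $X$ is folded (a tree with the $R_N$-labelling being an immersion), the edge of $X$ at $x$ with label $a$ (in the appropriate orientation) is unique; hence $e_1 = e_2 =: e$ is a single edge of $X$ lying in both $T_1$ and $T_2$, so $e \subset T_1 \cap T_2 = T$ is an edge of $T$ at $x$, and $f(e) = (\bar e_1, \bar e_2)$. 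Therefore all edges at $w$ are in $f(T)$, and $f(T)$ is open and closed.

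The main obstacle I expect is exactly the leaf-vertex step above: one must carefully track that the hypothesis $T_1, T_2 \in \R_{r+1}$ together with $T = T_1 \cap T_2 \subset B(\id, r)$ forces every vertex of $T$ — \emph{including its leaves} — to be interior in each $T_i$, which is what upgrades ``locally homeomorphic in the interior'' for $f$ to genuine local homeomorphy at every vertex of $T$, and it is this upgrade that drives both injectivity and surjectivity. The folding of $X$ (used to identify the candidate edges $e_1$ and $e_2$ inside $X$) is the other ingredient that must be invoked explicitly. Once these two points are in place, the argument is a short topological bookkeeping: $f$ is a locally homeomorphic map from a finite tree $T$ whose image is open-and-closed, hence a covering of the component $\gG$; since $T$ is simply connected and $\gG$ is connected, $f$ is a homeomorphism, so $\gG \cong T$ is contractible.
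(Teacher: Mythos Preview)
Your argument is correct. Both you and the paper build the same map $f\colon T\to\gG$ and show it is a graph isomorphism, but the two proofs organise the work differently. The paper proves surjectivity first, by a path-lifting argument: given any locally isometric path $p$ in $\gG$ starting at $(v_1,v_2)$, it lifts $\phi_i\circ p$ through $f_i$ into $T_i$, observes that both lifts coincide (as lifts through the universal cover $q$), and hence land in $T_1\cap T_2=T$; this simultaneously bounds the length of any such path by $r$, showing $\gG$ is a tree, after which injectivity is immediate (graph morphism between trees). Your approach instead proves injectivity first (via local injectivity of $f_1$ on the tree $T$) and then proves $f(T)$ is open-and-closed by showing that at every vertex $f(x)$, each incident edge of the fibre product lifts through $f_1,f_2$ to a single edge of $X$ lying in $T_1\cap T_2=T$; the key ingredients are the same (the hypothesis $T\subset B(\id,r)$ forces every vertex of $T$ to be interior in both $T_i$, and foldedness of $X$ identifies the two lifted edges), but your packaging avoids the path-lifting machinery and gives a slightly more combinatorial proof. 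The paper's route has the minor bonus of directly exhibiting the diameter bound $\operatorname{diam}\gG\le r$; your route makes the local structure of $f$ more transparent. Note that your case distinction is in fact unnecessary: the Case~2 argument already applies to every vertex of $T$, so Case~1 can be absorbed into it.
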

\begin{proof}
We denote by $\phi_i$ the natural $R_N$-graph morphism from $\gD_1\times_{R_N}\gD_2$ to $\gD_i$ ($i=1,2$), and consider the $R_N$-graph morphism 
$f \: T\rightarrow \gD_1\times_{R_N}\gD_2 ;\ x \mapsto (f_1(x),f_2(x))$ given by Lemma \ref{lem:occ and int}.
Since $f(\id )=(f_1(\id ),f_2(\id ))=(v_1,v_2)$, we have $f(T)\subset \gG $. We prove that the map $f\: T\rightarrow \gG $ is an $R_N$-graph isomorphism. 

First, we prove the surjectivity of $f$. Take any $x\in \gG$ and a locally isometric path $p\: [0,l]\rightarrow \gG$ such that $p(0)=(v_1,v_2)$ and $p(l)=x$. Assume that $l\leq r+1$. Since $f_i$ is locally homeomorphic in the interior, for the path $\phi_i \circ p$ in $\gD_i$ we can take the lift $\tilde{p}_i\: [0,l]\rightarrow T_i$ such that $f_i\circ \tilde{p_i}=\phi_i \circ p$ and $\tilde{p}_i(0)=\id$ ($i=1,2$). Then $\tilde{p}_i$ can be regarded as the lift of 
$\tau_i\circ \phi _i\circ p\: ([0,l],0)\to (R_N ,x_0)$ with respect to the universal covering $q\: (X,\id )\rightarrow (R_N,x_0)$. Since $\tau_1\circ \phi _1\circ p=\tau_2\circ \phi _2\circ p$, we have $\tilde{p}_1=\tilde{p}_2$ from the uniqueness of the lift.
Therefore $\tilde{p}_1(l)=\tilde{p}_2(l)\in T_1\cap T_2=T$, and $f(\tilde{p}_1(l))=(\phi_1\circ p(l),\phi_2 \circ p(l))=x$. Hence $f$ is surjective, and moreover, it follows that $l\leq r$.
If the length $l$ of $p$ is greater than $r+1$, we can do the same argument for $p|_{[0,r+1]}$ and this leads to a contradiction.
Consequently, there is no locally isometric path starting from $(v_1,v_2)$ in $\gG$ with length greater than $r$. In particular, $\gG$ is a tree.

Since $f$ is an $R_N$-graph morphism from the tree $T$ to the tree $\gG$, the injectivity of $f$ follows. Therefore, $f$ is an $R_N$-graph isomorphism from $T$ to $\gG$.
\end{proof}

\begin{lemma}\label{T_1,T_2}
Let $T\in \SSub (X,\id)$ with $T\subset B(\id ,r) $. Let $(\gD_1,\tau_1)$ and $(\gD_2,\tau_2)$ be $R_N$-core graphs and $\gG$ the connected component of $\gD_1\times_{R_N}\gD_2$ containing a vertex $(v_1,v_2)\in V(\gD_1 )\times V(\gD_2)$. Then there exists a based $R_N$-graph isomorphism from $(T,\id )$ to $(\gG , (v_1,v_2))$ if and only if $T_{r+1}(v_1)\cap T_{r+1}(v_2)=T$, where $T_{r+1}(v_i)$ is $(r+1)$-neighborhood of $v_i\ (i=1,2)$.
\end{lemma}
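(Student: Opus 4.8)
The plan is to prove the two implications separately. For ``$\Leftarrow$'', assume $T_{r+1}(v_1)\cap T_{r+1}(v_2)=T$ and put $T_i:=T_{r+1}(v_i)\in\R_{r+1}$. By the defining property of $(r+1)$-neighborhoods (recalled after Definition \ref{rem:r-neighborhood}) there are based occurrences $f_i\colon(T_i,\id)\to(\gD_i,v_i)$, so Lemma \ref{lem:occ and int} furnishes the $R_N$-graph morphism $f\colon T\to\gD_1\times_{R_N}\gD_2$, $x\mapsto(f_1(x),f_2(x))$, which is locally homeomorphic in the interior and satisfies $f(\id)=(v_1,v_2)$, hence $f(T)\subseteq\gG$. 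Since moreover $T\in\SSub(X,\id)$, $T\subset B(\id,r)$, $T_1,T_2\in\R_{r+1}$ and $T_1\cap T_2=T$, the proof of Lemma \ref{lem:contra} shows that this very map $f\colon T\to\gG$ is an $R_N$-graph isomorphism; since $f(\id)=(v_1,v_2)$ it is based. So this direction uses essentially nothing new.

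For ``$\Rightarrow$'' the main tool is the following description of $(r+1)$-neighborhoods, which I would record first: for an $R_N$-core graph $\gD$, a vertex $v$ of $\gD$, and $x\in V(X)$ with $d_X(\id,x)\le r+1$, one has $x\in T_{r+1}(v)$ if and only if the reduced word $w$ representing $x$ labels a reduced edge-path in $\gD$ starting at $v$. One inclusion: the based occurrence $h\colon(T_{r+1}(v),\id)\to(\gD,v)$ is an immersion, so if $x\in T_{r+1}(v)$ then $h$ carries the geodesic $[\id,x]\subseteq T_{r+1}(v)$ (a convex subset) to a reduced path from $v$ with label word $w$. The other inclusion: a reduced label word can be unfolded one edge at a time into $T_{r+1}(v)$, because every vertex of $T_{r+1}(v)$ at distance $<r+1$ from $\id$ has degree $>1$ (by the grade-$(r+1)$ condition a degree-one vertex lies at distance exactly $r+1$), hence is interior, hence has the same degree as its $h$-image in $\gD$.

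Granting this, assume $g\colon(T,\id)\to(\gG,(v_1,v_2))$ is a based $R_N$-graph isomorphism and set $T':=T_{r+1}(v_1)\cap T_{r+1}(v_2)$; it remains to show $T=T'$. For $T\subseteq T'$: given $x\in T$ (so $d_X(\id,x)\le r$), $g$ carries $[\id,x]\subseteq T$ to a reduced path in $\gG$ from $(v_1,v_2)$ labeled by the reduced word $w$ representing $x$; composing with the projection $\phi_i\colon\gD_1\times_{R_N}\gD_2\to\gD_i$, which is an immersion since $\gD_i$ is folded, gives a reduced path from $v_i$ in $\gD_i$ labeled $w$, whence $x\in T_{r+1}(v_i)$ by the description above, so $x\in T'$. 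For $T'\subseteq T$: given $x\in T'$ the reduced word $w$ representing $x$ labels reduced paths from $v_1$ in $\gD_1$ and from $v_2$ in $\gD_2$; since the fiber product of folded $R_N$-graphs is folded, these assemble into a reduced path from $(v_1,v_2)$ in $\gD_1\times_{R_N}\gD_2$ labeled $w$, which lies in $\gG$; applying the immersion $g^{-1}$ gives a reduced path in $T\subseteq X$ from $\id$ labeled $w$, and the unique such path in $X$ is $[\id,x]$, so $[\id,x]\subseteq T$ and $x\in T$. The two inclusions give $T=T'=T_{r+1}(v_1)\cap T_{r+1}(v_2)$, completing the ``$\Rightarrow$'' direction.

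I expect the only delicate point to be the bookkeeping in the description of $T_{r+1}(v)$ above: one must check that the geodesic $[\id,x]$, seen inside the round graph $T_{r+1}(v)$, passes through no degree-one vertex other than possibly its endpoint (so that $h$ does not backtrack along it), which is precisely where the grade-$(r+1)$ structure and the bound $d_X(\id,x)\le r+1$ enter. Everything else --- that immersions (in particular $R_N$-graph isomorphisms, inclusions of connected components of folded graphs, and the projections $\phi_i$) carry reduced paths to reduced paths, that the fiber product of folded $R_N$-graphs is folded (Definition \ref{def:fiber product graph}), and that a based $R_N$-graph morphism into the tree $X$ is determined by its label word --- is routine and already implicit in the material recalled above.
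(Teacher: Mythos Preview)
Your proof is correct. For the ``$\Leftarrow$'' direction both you and the paper simply invoke Lemma~\ref{lem:contra}. For the ``$\Rightarrow$'' direction the approaches differ: you introduce an explicit reduced-word characterization of membership in $T_{r+1}(v)$ and then transport reduced paths back and forth through $g$, $g^{-1}$ and the projections $\phi_i$; the paper instead works directly with the occurrence machinery, applying Lemma~\ref{lem:occ and int} to the occurrences $f_i\colon T_{r+1}(v_i)\to\gD_i$ to obtain $f\colon T_{r+1}(v_1)\cap T_{r+1}(v_2)\to\gG$, then composes with $\varphi^{-1}$ to get a based $R_N$-graph morphism into $T\subset X$ (which is forced to be the inclusion, giving $T_{r+1}(v_1)\cap T_{r+1}(v_2)\subset T$), and in the other direction lifts $\phi_i\circ\varphi\colon T\to\gD_i$ through $f_i$ to land in $T_{r+1}(v_i)$, using $T\subset B(\id,r)$ to ensure the lift exists. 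Your route is longer but more explicit and self-contained; the paper's is shorter because the needed lifts are already packaged in Lemma~\ref{lem:occ and int} and in the observation that a based $R_N$-morphism between subtrees of $X$ must be the inclusion. Two harmless slips in your write-up: $\phi_i$ is an immersion because $\gD_{3-i}$ (not $\gD_i$) is folded, and the assembled path in the fiber product is reduced simply because its label word $w$ is reduced---foldedness of the fiber product is not what is doing the work there.
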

\begin{proof} The ``if'' part follows from Lemma \ref{lem:contra}. To prove the ``only if'' part, we suppose there exists an $R_N$-graph isomorphism $\varphi \: (T,\id )\to (\gG , (v_1,v_2))$.
Let $f_i\: (T_{r+1}(v_i),\id )\to (\gD_i ,v_i)$ be the occurrence ($i=1,2$). From Lemma \ref{lem:occ and int} we have the $R_N$-graph morphism
\[f\: T_{r+1}(v_1)\cap T_{r+1}(v_2)\to \gG \subset \gD_1 \times _{R_N} \gD_2 ;\ x\to (f_1(x),f_2(x))\]
which is locally homeomorphic in the interior. Then the $R_N$-graph morphism $\varphi ^{-1}\circ f\: T_{r+1}(v_1)\cap T_{r+1}(v_2)\to T$ is locally homeomorphic in the interior and $\varphi^{-1}\circ f (\id )=\id$, which implies that $T_{r+1}(v_1)\cap T_{r+1}(v_2)\subset T$. Since $T\subset B(\id ,r)$, for the natural $R_N$-graph morphism $\phi_i \: \gD_1\times_{R_N}\gD_2 \to \gD_i$ we have the lift $\tilde{\phi }_i\: T\to T_{r+1}(v_i)$ such that $f_i\circ \tilde{\phi}_i=\phi_i \circ \varphi$ and $\tilde{\phi}_i(\id )=\id$ ($i=1,2$). Therefore, 
$T\subset T_{r+1}(v_1)\cap T_{r+1}(v_2)$, which concludes that $T_{r+1}(v_1)\cap T_{r+1}(v_2)=T$.
\end{proof}

\begin{notation}{\em
Let $T\in \SSub (X,\id)$ and let $(\gD_1,\tau_1),(\gD_2,\tau_2)$ be $R_N$-core graphs. We denote by $c(\gD_1\times_{R_N}\gD_2, T)$ the number of contractible components of $\gD_1\times_{R_N}\gD_2$ that are $R_N$-graph isomorphic to $T$.

We define $\SSub (X,\id )/F_N$ to be the set of all the equivalence classes of the following equivalence relation: $T_1\in \SSub (X,\id )$ is equivalent to $T_2\in \SSub (X,\id )$ if there exists $g\in F_N$ such that $gT_1=T_2$. We denote by $[T]$ the equivalence class containing $T\in \SSub (X,\id)$.

For any contractible component $\gG$ of $\gD_1\times_{R_N}\gD_2$ we have a lift $\gG \to X$ of $\tau_1 \circ \phi_1\: \gG \to R_N$ with respect to the universal covering $q\: X\to R_N$, which implies that there exists a unique equivalence class $[T]\in \SSub (X,\id )/F_N$ such that $\gG$ is $R_N$-graph isomorphic to $T$. Therefore we have
\[ c(\gD_1\times_{R_N}\gD_2 )=\sum_{[T]\in \SSub(X,\id )/F_N} c(\gD_1\times_{R_N}\gD_2 ,T).\]
}\end{notation}

Let $H,K\in \Sub (F_N)$ and $T\in \SSub(X,\id )$ with $T\subset B(\id ,r)$ for an integer $r\geq 0$. From Lemma \ref{T_1,T_2} and Lemma \ref{scyl,occ}, we obtain
\begin{align*}
&c(\gD_{H}\times_{R_N}\gD_{K},T)\\
	=&\sum_{\substack{T_1,T_2\in \R_{r+1}\\[1pt] T_1\cap T_2=T}}\# \{v\in V(\gD_{H}): T_{r+1}(v)=T_1\} \cdot \# \{v\in V(\gD_{K}): T_{r+1}(v)=T_2\}\\
	=&\sum_{\substack{T_1,T_2\in \R_{r+1}\\[1pt] T_1\cap T_2=T}}\eta_{H}(\SCyl (T_1))\eta_{K}(\SCyl (T_2)).
\end{align*}

\begin{notation}{\em
Let $T\in \SSub (X,\id)$. Set
\[ \R (T):=\{ (S_1,S_2)\in \C_N\times \C_N \ |\ \mathrm{Conv}(S_1)\cap \mathrm{Conv}(S_2)=T\}.\]
Let $r$ be a positive integer which satisfies $T\subset B(\id ,r)$. Then we can see that the following equality holds:
\[ (*)\qquad \R (T)=\bigsqcup_{\substack{T_1,T_2\in \R_{r+1}\\[1pt] T_1\cap T_2=T}}\SCyl (T_1)\times \SCyl (T_2),
\]
and so 
\[ c(\gD_{H}\times_{R_N}\gD_{K}, T)= \eta_{H}\times \eta_{K}(\R (T)),\]
where $\eta_H \times \eta_K$ is the product measure of $\eta_H$ and $\eta_K$.
}\end{notation}

From the definition of $\R (T)$ and $(*)$, we immediately have the following proposition.

\begin{proposition}Let $T,T'\in \SSub (X,\id )$.
\begin{enumerate}
\item If $T\not=T'$, then we have 
\[ \R (T)\cap \R (T')=\emptyset.\]
\item If there exists $g\in F_N$ such that $gT=T'$, then for any $\mu,\nu \in \SCurr (F_N)$
\[ \mu \times \nu (\R (T))=\mu \times \nu (\R (T')).\]
\item The map
\[ \SCurr (F_N)\times \SCurr (F_N)\to \RRR ;\ (\mu ,\nu )\mapsto \mu \times \nu (\R (T))\]
is a continuous $\RRR$-bilinear map.
\end{enumerate}
\end{proposition}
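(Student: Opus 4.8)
The three assertions will all be obtained by unwinding the definitions, using the decomposition $(*)$ only for part (3). For part (1) I would argue by contraposition: if $(S_1,S_2)$ belonged to $\R(T)\cap\R(T')$, then directly from the definition of $\R(\cdot)$ we would have
$T=\mathrm{Conv}(S_1)\cap\mathrm{Conv}(S_2)=T'$, so $T\ne T'$ forces $\R(T)\cap\R(T')=\emptyset$.

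For part (2) the point is that each $g\in F_N$ acts on $X$ by a graph isomorphism, hence $\mathrm{Conv}(gS)=g\,\mathrm{Conv}(S)$ for every $S\in\C_N$. Consequently the diagonal homeomorphism $\Psi_g\colon\C_N\times\C_N\to\C_N\times\C_N$, $(S_1,S_2)\mapsto(gS_1,gS_2)$, carries $\R(T)$ bijectively onto $\R(gT)=\R(T')$ (with inverse $\Psi_{g^{-1}}$). Since $\mu$ and $\nu$ are $F_N$-invariant, the product measure $\mu\times\nu$ is invariant under $\Psi_g$ — indeed under the whole $F_N\times F_N$-action, because $(\psi_g\times\psi_g)_{*}(\mu\times\nu)=g\mu\times g\nu=\mu\times\nu$ — and therefore $\mu\times\nu(\R(T'))=\mu\times\nu(\Psi_g(\R(T)))=\mu\times\nu(\R(T))$.

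For part (3) I would invoke $(*)$: fix an integer $r\ge 1$ with $T\subset B(\id,r)$. Because $X$ is the Cayley graph of $F_N$ with respect to the finite basis $A$, it is locally finite, so $B(\id,r+1)$ is a finite subtree of $X$ and hence $\R_{r+1}$, being a set of subtrees of $B(\id,r+1)$, is finite. Thus $(*)$ exhibits $\R(T)$ as a \emph{finite} disjoint union of compact open sets $\SCyl(T_1)\times\SCyl(T_2)$ with $T_1,T_2\in\R_{r+1}$ and $T_1\cap T_2=T$; in particular $\R(T)$ is compact open and
\[
\mu\times\nu(\R(T))=\sum_{\substack{T_1,T_2\in\R_{r+1}\\ T_1\cap T_2=T}}\mu(\SCyl(T_1))\,\nu(\SCyl(T_2)).
\]
By Proposition \ref{prop:Scyl is conti} each functional $\mu\mapsto\mu(\SCyl(T_i))$ is continuous and $\RRR$-linear, so each summand $(\mu,\nu)\mapsto\mu(\SCyl(T_1))\nu(\SCyl(T_2))$ is continuous and $\RRR$-bilinear, and a finite sum of such maps is again continuous and $\RRR$-bilinear.

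I do not expect a genuine obstacle, since the proposition really is a formality once $(*)$ is in hand; the one point that deserves care is the finiteness of $\R_{r+1}$, which rests on local finiteness of $X$ and is precisely what turns the a priori countable disjoint union in $(*)$ into a finite one — without that, termwise continuity would not suffice. The equivariance identity $\mathrm{Conv}(gS)=g\,\mathrm{Conv}(S)$ and the invariance of $\mu\times\nu$ under the diagonal action used in part (2) are routine.
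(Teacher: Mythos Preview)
Your proposal is correct and matches the paper's approach exactly: the paper declares the proposition immediate ``from the definition of $\R(T)$ and $(*)$'' without further argument, and your write-up supplies precisely those details --- the definition for (1) and (2), and the finite decomposition $(*)$ together with Proposition~\ref{prop:Scyl is conti} for (3). Your remark that finiteness of $\R_{r+1}$ is the point needing care is apt and is the only thing the reader must check.
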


Moreover, $\R (T)\ (T\in \SSub (X,\id ))$ has the following properties:
\begin{align*}
&\bigsqcup_{T\in \SSub (X,\id )} \R (T) \\
	=&\{ (S_1,S_2)\in \C_N \times \C_N \ |\ \exists T\in \SSub (X,\id ), \mathrm{Conv}(S_1)\cap \mathrm{Conv}(S_2)=T\} \\
	\subset &\{ (S_1,S_2)\in \C_N \times \C_N \ |\ \mathrm{Conv}(S_1),\mathrm{Conv}(S_2)\ni\id \} \\
	=&\bigsqcup_{T_1,T_2\in \R _1}\SCyl (T_1)\times \SCyl (T_2).
\end{align*}
Therefore, for $\mu ,\nu \in \SCurr (F_N)$ 
\begin{align*}
\sum_{T\in \SSub (X,\id )} \mu \times \nu (\R (T)) 
	&=\mu \times \nu \left( \bigsqcup_{T\in \SSub (X,\id )} \R (T)\right)\\
	&\leq \mu \times \nu \left( \bigsqcup_{T_1,T_2\in \R _1}\SCyl (T_1)\times \SCyl (T_2) \right) \\
	&=\mu \left(\bigsqcup_{T_1\in \R_1} \SCyl (T_1)\right) \nu \left(\bigsqcup_{T_2\in \R_1} \SCyl (T_2)\right) \\
	&=V(\mu )V(\nu ).
\end{align*}
Hence the infinite sum 
\[\sum_{T\in \SSub (X,\id )} \mu \times \nu (\R (T))  \]
always converges.
Since $\# [T]=\# V(T)$ for $T\in \SSub (X,\id )$, we have
\[  \sum_{T\in \SSub (X,\id )} \mu \times \nu (\R (T))
	=\sum_{[T]\in \SSub (X,\id )/F_N} \# V(T) \mu \times \nu (\R (T)). \]

Let $H,K\in \Sub (F_N)$. Then
\begin{align*}
c(\gD_{H}\times_{R_N}\gD_{K})
	&=\sum_{[T]\in \SSub (X,\id )/F_N} c(\gD_{H}\times_{R_N}\gD_{K}, T)\\
	&=\sum_{[T]\in \SSub (X,\id )/F_N} \eta_{H}\times \eta_{K}(\R (T)).
\end{align*}
From the above, we define the $\RRR$-bilinear map 
\[ \widehat{c}\:  \SCurr (F_N)\times \SCurr (F_N)\rightarrow \RRR \]
as
\[ \widehat{c}(\mu ,\nu ):= \sum_{[T]\in \SSub (X,\id )/F_N} \mu \times \nu (\R (T))\ .\]
Then we immediately have $\widehat{c} (\eta_{H},\eta_{K})=c(\gD_{H}\times_{R_N}\gD_{K})$ for $H,K\in \Sub (F_N)$.

Note that $\widehat{c}(\mu,\nu)$ can be represented by 
\[ \widehat{c}(\mu ,\nu )=\sum_{m=1}^{\infty } \sum_{\substack{[T]\in \SSub (X,\id )/F_N\\[1pt] \# V(T)=m}} \mu \times \nu( \R(T)). \]

\begin{theorem}
The $\RRR$-bilinear map $\widehat{c}$ is continuous.
\end{theorem}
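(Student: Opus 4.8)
\textit{Proof plan.} The new feature here, compared with $\widehat{V}$ and $\widehat{E}$, is that $\widehat{c}$ is an \emph{infinite} sum of continuous $\RRR$-bilinear maps, so termwise continuity alone is not enough. The plan is to exhibit $\widehat{c}$ as a limit — uniform on a neighbourhood of each point — of continuous bilinear maps, the key input being a tail estimate that is uniform in the pair of currents. For $m\ge 1$ set
\[ \widehat{c}_m(\mu ,\nu ):=\sum_{\substack{[T]\in \SSub (X,\id )/F_N\\[1pt] \# V(T)\le m}}\mu \times \nu (\R (T)), \]
so that $\widehat{c}(\mu,\nu)=\lim_{m\to\infty}\widehat{c}_m(\mu,\nu)$ pointwise. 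First I would record that each $\widehat{c}_m$ is continuous and $\RRR$-bilinear: since $X$ is locally finite, there are only finitely many finite subtrees of $X$ containing $\id$ with at most $m$ vertices, hence the index set $\{\,[T]:\# V(T)\le m\,\}$ is finite, and each summand $(\mu,\nu)\mapsto \mu\times\nu(\R(T))$ is continuous and $\RRR$-bilinear by the Proposition preceding the definition of $\widehat{c}$; a finite sum of such maps has the same property.

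Next I would prove the key estimate: for all $\mu ,\nu \in \SCurr (F_N)$ and all $m\ge 1$,
\[ 0\le \widehat{c}(\mu ,\nu )-\widehat{c}_m(\mu ,\nu )\le \frac{1}{m+1}\,V(\mu )V(\nu ). \]
The lower bound is immediate. For the upper bound, observe that when $\# V(T)\ge m+1$ we have $\mu\times\nu(\R(T))\le \tfrac{\# V(T)}{m+1}\,\mu\times\nu(\R(T))$, simply because $\mu\times\nu(\R(T))\ge 0$; summing over the classes $[T]$ with $\# V(T)\ge m+1$ and then enlarging the range of summation gives
\[ \widehat{c}(\mu ,\nu )-\widehat{c}_m(\mu ,\nu )\le \frac{1}{m+1}\sum_{[T]\in \SSub (X,\id )/F_N}\# V(T)\,\mu \times \nu (\R (T))\le \frac{1}{m+1}\,V(\mu )V(\nu ), \]
where the last inequality is exactly the domination $\sum_{[T]}\# V(T)\,\mu\times\nu(\R(T))=\sum_{T\in\SSub(X,\id)}\mu\times\nu(\R(T))\le V(\mu)V(\nu)$ established just before the definition of $\widehat{c}$. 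This uniform tail estimate is the crux of the argument, and the place where the previously established bound involving $\sum_{[T]}\# V(T)$ gets used; once it is in hand the rest is routine.

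Finally I would deduce continuity of $\widehat{c}$ at an arbitrary $(\mu_0,\nu_0)$. Put $R:=\max\{V(\mu_0),V(\nu_0)\}+1$; since $V$ is continuous, $W:=\{\mu:V(\mu)<R\}\times\{\nu:V(\nu)<R\}$ is an open neighbourhood of $(\mu_0,\nu_0)$, and on $W$ the key estimate gives $0\le \widehat{c}-\widehat{c}_m\le R^2/(m+1)$. Given $\varepsilon>0$, choose $m$ with $R^2/(m+1)<\varepsilon/3$; continuity of $\widehat{c}_m$ furnishes a neighbourhood $U\subset W$ of $(\mu_0,\nu_0)$ on which $|\widehat{c}_m-\widehat{c}_m(\mu_0,\nu_0)|<\varepsilon/3$, and since $|\widehat{c}_m(\mu_0,\nu_0)-\widehat{c}(\mu_0,\nu_0)|\le R^2/(m+1)<\varepsilon/3$, the triangle inequality yields $|\widehat{c}-\widehat{c}(\mu_0,\nu_0)|<\varepsilon$ on $U$. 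Hence $\widehat{c}$ is continuous. (One may instead phrase the last step with sequences or nets converging to $(\mu_0,\nu_0)$, using that $V$ is continuous to bound $V(\mu_n)V(\nu_n)$ eventually, but the neighbourhood version avoids any appeal to metrizability of $\SCurr(F_N)$.)
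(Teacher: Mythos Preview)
Your proof is correct and follows essentially the same approach as the paper: both rely on the tail estimate $\widehat{c}-\widehat{c}_m\le \tfrac{1}{m+1}V(\mu)V(\nu)$ derived from the domination $\sum_{[T]}\#V(T)\,\mu\times\nu(\R(T))\le V(\mu)V(\nu)$, combine it with continuity of $V$ to get a locally uniform bound, and finish with an $\varepsilon/3$ argument. The only difference is cosmetic---the paper phrases the argument with convergent sequences while you use open neighbourhoods---but the content is identical.
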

\begin{proof}
Let $\mu _1,\mu_2 \in \SCurr (F_N)$ and $\mu_1^n, \mu_2^n\in \SCurr (F_N)\ (n=1,2,\dots )$ such that $\mu_i^n \rightarrow \mu_i\ (n\rightarrow \infty )\ (i=1,2)$. We prove that $\widehat{c}(\mu_1^n,\mu_2^n)\rightarrow \widehat{c}(\mu_1,\mu_2)\ (n\rightarrow \infty)$. 

Fix $\varepsilon >0$. 
Since $V\: \SCurr (F_N)\rightarrow \RRR $ is continuous, $V(\mu_i^n)\rightarrow V(\mu_i)\ (n\rightarrow \infty )$, and so we set 
\[ M:= \frac{3}{\varepsilon} \tn{sup}\{ V(\mu_1^n) V(\mu_2^n)\ |\ n=1,2,\dots \} (<\infty ).\]
Take a positive integer $L\geq M$. Then,
\begin{align*}
	&L\sum_{m=L}^{\infty } \sum_{\substack{[T]\in \SSub (X,\id )/F_N\\[1pt] \# V(T)=m}} \mu_1^n\times \mu_2^n( \R(T)) \\
\leq &\sum_{m=L}^{\infty } \sum_{\substack{[T]\in \SSub (X,\id )/F_N\\[1pt] \# V(T)=m}} m \mu_1^n\times \mu_2^n( \R(T)) \\
\leq &\sum_{m=1}^{\infty } \sum_{\substack{[T]\in \SSub (X,\id )/F_N\\[1pt] \# V(T)=m}} m \mu_1^n\times \mu_2^n( \R(T)) \\
= &\sum_{T\in \SSub (X,\id )} \# V(T) \mu_1^n\times \mu_2^n( \R(T)) \\
\leq  &V(\mu_1^n ) V(\mu_2^n).
\end{align*}
Consequently, we have
\begin{align*}
	&\sum_{m=L}^{\infty } \sum_{\substack{[T]\in \SSub (X,\id )/F_N\\[1pt] \# V(T)=m}} \mu_1^n\times \mu_2^n( \R(T)) \\
	\leq &\frac{1}{L}V(\mu_1^n ) V(\mu_2^n ) 
	\leq \frac{1}{M}V(\mu_1^n ) V(\mu_2^n ) 
	\leq \frac{\varepsilon}{3}\quad  (n=1,2\dots ).
\end{align*}
In the same way, we have 
\[ \sum_{m=L}^{\infty } \sum_{\substack{[T]\in \SSub (X,\id )/F_N\\[1pt] \# V(T)=m}} \mu_1\times \mu_2( \R(T)) \leq \frac{\varepsilon}{3}.\]

Since 
\[ \sum_{m=1}^{L-1 } \sum_{\substack{[T]\in \SSub (X,\id )/F_N\\[1pt] \# V(T)=m}} \mu_1^n\times \mu_2^n( \R(T)) \]
is a finite sum, this converges to 
\[ \sum_{m=1}^{L-1 } \sum_{\substack{[T]\in \SSub (X,\id )/F_N\\[1pt] \# V(T)=m}} \mu_1\times \mu_2( \R(T)) \]
when $n\rightarrow \infty$. If $n$ is large enough, then the absolute value of the difference of the above two sums is smaller than $\varepsilon /3$. Hence,
\[ |\widehat{c}(\mu_1^n,\mu_2^n )-\widehat{c}(\mu_1,\mu_2) |<\frac{\varepsilon}{3}+\frac{\varepsilon}{3}+\frac{\varepsilon}{3}=\varepsilon. \]
This completes the proof.
\end{proof}

\begin{proof}[Proof of Theorem \ref{thm:Neumann product}]
We define the $\RRR$-bilinear functional
\[ \N \:  \SCurr (F_N)\times \SCurr (F_N)\rightarrow \RRR \]
as
\[ \N (\mu ,\nu ):= \widehat{E}(\mu ,\nu )-\widehat{V}(\mu, \nu )+\widehat{c}(\mu ,\nu )\]
for $\mu ,\nu \in \SCurr (F_N)$. Then $\N$ is a continuous symmetric $\RRR$-bilinear functional, and we have $\N (\eta_{H},\eta_{K})=\N (H, K)$ for $ H,K\in \Sub (F_N)$. The uniqueness and $\tn{Out}(F_N)$-invariance of $\N$ follows immediately from the denseness of the rational subset currents in $\SCurr (F_N)$. 
\end{proof}

Now we recall the inequality in the SHNC (Theorem \ref{thm:Mineyev}).
Since the rational subset currents are dense in $\SCurr (F_N)$, and 
\[ \rk \times \rk :\SCurr (F_N)\times \SCurr (F_N)\to \RRR ;\ (\mu ,\nu )\mapsto \rk (\mu )\rk (\nu ) \]
is a continuous $\RRR$-bilinear map, we have the following corollary.
\begin{corollary}\label{cor:SHNC}
Let $\mu ,\nu \in \SCurr (F_N)$. The following inequality holds:
\[ \N (\mu ,\nu )\leq \rk (\mu )\rk (\nu ).\]
\end{corollary}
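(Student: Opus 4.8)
The plan is to reduce the inequality from arbitrary subset currents to rational ones, where it is precisely the statement of the Strengthened Hanna Neumann Conjecture (Theorem \ref{thm:Mineyev}), and then to pass to the limit using continuity.

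First I would check the inequality on rational subset currents. Let $r,s\geq 0$ and $H,K\in \Sub (F_N)$. By the $\RRR$-bilinearity of $\N$ (Theorem \ref{thm:Neumann product}) and the $\RRR$-linearity of $\rk$ (Theorem \ref{thm:reduced rank}),
\[ \N (r\eta_H, s\eta_K) = rs\,\N(\eta_H,\eta_K) = rs\,\N(H,K),\qquad \rk(r\eta_H)\,\rk(s\eta_K) = rs\,\rk(H)\,\rk(K),\]
so Theorem \ref{thm:Mineyev} gives $\N(r\eta_H,s\eta_K)\leq \rk(r\eta_H)\rk(s\eta_K)$. The degenerate cases are harmless: if $H$ or $K$ is trivial then $\eta_H=0$ or $\eta_K=0$ and both sides vanish; if $r$ or $s$ is $0$ the same is true.

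Next, since the rational subset currents form a dense subset of $\SCurr (F_N)$ (see \cite{KN13} and \cite{Kap13}) and convergence in $\SCurr (F_N)$ is detected by sequences via Proposition \ref{prop:Scyl is conti}, for arbitrary $\mu,\nu\in \SCurr (F_N)$ I would choose sequences of rational subset currents $\mu_n\to \mu$ and $\nu_n\to \nu$. By the previous paragraph $\N(\mu_n,\nu_n)\leq \rk(\mu_n)\rk(\nu_n)$ for every $n$. Both sides are continuous in $(\mu,\nu)$: the functional $\N$ is continuous by Theorem \ref{thm:Neumann product}, and the map $(\mu,\nu)\mapsto \rk(\mu)\rk(\nu)$ is continuous because $\rk$ is continuous. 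Letting $n\to\infty$ preserves the inequality and yields $\N(\mu,\nu)\leq \rk(\mu)\rk(\nu)$.

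I do not expect any serious obstacle here; this is a routine density-and-continuity argument once Theorems \ref{thm:Mineyev} and \ref{thm:Neumann product} are in hand. The only points that need a word of care are that one may approximate the two arguments simultaneously by rational currents and that the product $\rk\times\rk$ is jointly continuous, and both of these are immediate from the continuity of $\rk$ and the metrizability (sequential characterization of convergence) of $\SCurr (F_N)$.
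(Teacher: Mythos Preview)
Your argument is correct and is exactly the density-and-continuity argument the paper uses: invoke Theorem~\ref{thm:Mineyev} on rational subset currents, then pass to the limit using the denseness of rational currents together with the continuity of $\N$ and of $(\mu,\nu)\mapsto \rk(\mu)\rk(\nu)$.
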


\section{The intersection map and the intersection functional}\label{sec:intersection}
Let $I$ be the \ti{intersection map}:
\[ I \: \C_N \times \C_N \rightarrow \{ \tn{closed subsets of } \partial X\} ;\ (S_1,S_2)\mapsto S_1\cap S_2. \]
For $\mu ,\nu \in \SCurr (F_N)$, the intersection map $I$ induces a Borel measure 
$I _{*}(\mu \times \nu )$ on $\C_N$ by pushing forward: for a Borel subset $U\subset \C_N$, we define
\[ I _{*}(\mu \times \nu ) (U):= \mu \times \nu (I ^{-1}(U)).\]
Note that it is not trivial that $I^{-1}(U)$ is a measurable set of $\mu \times \nu$ (see Appendix \ref{app:1}). The Borel measure $I _{*}(\mu \times \nu )$ is a subset current on $F_N$, that is, an $F_N$-invariant locally finite Borel measure on $\C_N$. When we consider the diagonal action of $F_N$ on $\C_N\times \C_N$, the product measure $\mu\times \nu$ is $F_N$-invariant. Therefore, $I _{*}(\mu \times \nu )$ is also an $F_N$-invariant Borel measure. 

Next, we check the local finiteness of $I _{*}(\mu \times \nu )$. For every $T\in \Sub (X,\id )$ 
\begin{align*}
I^{-1} (\SCyl (T))
&\subset \{ (S_1,S_2)\in \C \times \C \ |\ \mathrm{Conv}(S_1),\mathrm{Conv}(S_2)\ni \id \} \\
	&=\bigsqcup_{T_1,T_2\in \R _1}\SCyl (T_1)\times \SCyl (T_2).
\end{align*}
Thus we have
\begin{align*}
I _{*}(\mu \times \nu ) (\SCyl (T))&=\mu \times \nu (I^{-1}(\SCyl (T)))\\
	&\leq \mu \times \nu \left(\bigsqcup_{T_1,T_2\in \R _1}\SCyl (T_1)\times \SCyl (T_2)\right)\\
	&=V(\mu )V(\nu )<\infty.
\end{align*}
This implies that $I _{*}(\mu \times \nu ) $ is locally finite.

We define the $\RRR$-bilinear map
\[ \widehat{I}\: \SCurr (F_N)\times \SCurr (F_N)\rightarrow \SCurr (F_N),\]
as 
\[ \widehat{I}(\mu ,\nu ):= I_{*}(\mu \times \nu ). \]

For $H,K\in \Sub (F_N)$ and any Borel subset $U\subset \C_N$, we have
\begin{align*}
&\widehat{I}(\eta_H ,\eta _K )(U)\\
	=&\eta_H\times \eta_K (I^{-1}(U))\\
	=&\sum_{(g_1H,g_2K)\in F_N/H\times F_N/K} \gd_{g_1\Lambda (H)}\times \gd_{g_2\Lambda (K)}(I^{-1}(U))\\
	=&\sum_{(g_1H,g_2K)\in F_N/H\times F_N/K} \gd_{g_1\Lambda (H)\cap g_2\Lambda (K)}(U).
\end{align*}
Therefore, we have the following explicit description of $\widehat{I}(\eta_H,\eta_K)$:
\[ \widehat{I}(\eta_H ,\eta _K )=\sum_{(g_1H,g_2K)\in F_N/H\times F_N/K} \gd_{g_1\Lambda (H)\cap g_2\Lambda (K)}.\]
Since $\Lambda (H'\cap K')=\Lambda (H')\cap \Lambda (K')$ for any $H',K'\in \Sub (F_N)$,
we have
\[ \widehat{I}(\eta_H ,\eta _K )=\sum_{(g_1H,g_2K)\in F_N/H\times F_N/K} \gd_{\Lambda (g_1Hg_1^{-1}\cap g_2Kg_2^{-1})}.\]

\begin{theorem}\label{thm:int}
For $H,K\in \Sub (F_N)$ we have
\[ \widehat{I}(\eta _H,\eta _K)=\sum_{HgK\in H\backslash F_N/K}\eta_{H\cap gKg^{-1}}.\]
Hence, 
\[ \rk \circ \widehat{I}(\eta _H,\eta _K)=\N (\eta _H,\eta _K)\ (H,K\in \Sub (F_N)).\]
\end{theorem}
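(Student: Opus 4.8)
The plan is to work directly from the explicit formula for $\widehat I(\eta_H,\eta_K)$ obtained just above the theorem, namely
\[ \widehat{I}(\eta_H ,\eta _K )=\sum_{(g_1H,g_2K)\in F_N/H\times F_N/K} \gd_{\Lambda (g_1Hg_1^{-1}\cap g_2Kg_2^{-1})}, \]
and to reorganize this countable sum of positive measures according to the orbits of the diagonal left action of $F_N$ on $F_N/H\times F_N/K$; since every summand is a non-negative Borel measure and the index set is countable, such a regrouping is harmless. The first thing I would check is the elementary combinatorics of this action: the assignment $(g_1H,g_2K)\mapsto Hg_1^{-1}g_2K$ descends to a bijection between the set of $F_N$-orbits in $F_N/H\times F_N/K$ and the double coset set $H\backslash F_N/K$; the orbit corresponding to a double coset $HgK$ contains the point $(H,gK)$; and the stabilizer of $(H,gK)$ for the diagonal action is precisely $H\cap gKg^{-1}$. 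Consequently this orbit is $F_N$-equivariantly parametrized by $F_N/(H\cap gKg^{-1})$ via $a(H\cap gKg^{-1})\mapsto (aH,agK)$.

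The next step is to evaluate the partial sum over a single orbit. Writing $L_g:=H\cap gKg^{-1}$ and using $\Lambda(aLa^{-1})=a\Lambda(L)$ together with $aHa^{-1}\cap agKg^{-1}a^{-1}=aL_ga^{-1}$, the contribution of the orbit attached to $HgK$ is
\[ \sum_{aL_g\in F_N/L_g} \gd_{\Lambda(aHa^{-1}\cap agKg^{-1}a^{-1})}=\sum_{aL_g\in F_N/L_g}\gd_{a\Lambda(L_g)}, \]
which by Proposition~\ref{gH} equals $\eta_{L_g}=\eta_{H\cap gKg^{-1}}$ whenever $L_g\in\Sub(F_N)$. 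If $L_g=\{\id\}$ the partial sum is a countable sum of copies of $\gd_{\emptyset}$, hence the zero measure on $\C_N$, in agreement with the convention $\eta_{\{\id\}}=0$; so in every case the orbit over $HgK$ contributes $\eta_{H\cap gKg^{-1}}$. Summing over all double cosets then yields $\widehat{I}(\eta_H,\eta_K)=\sum_{HgK\in H\backslash F_N/K}\eta_{H\cap gKg^{-1}}$, which is the first assertion.

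For the ``Hence'' part I would apply the reduced rank functional. Recall that $H\cap gKg^{-1}\not=\{\id\}$ for only finitely many double cosets $HgK$, so the sum just obtained is a \emph{finite} $\RRR$-linear combination of counting subset currents; $\RRR$-linearity of $\rk$ (Theorem~\ref{thm:reduced rank}) and the normalization $\rk(\eta_L)=\rk(L)$ then give
\[ \rk\circ\widehat{I}(\eta_H,\eta_K)=\sum_{HgK\in H\backslash F_N/K}\rk(H\cap gKg^{-1})=\N(H,K)=\N(\eta_H,\eta_K), \]
the last equality being the defining property of the intersection functional from Theorem~\ref{thm:Neumann product}.

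I expect the only genuinely delicate point to be the bookkeeping in the first step — in particular verifying that distinct double cosets give disjoint $F_N$-orbits and that the stabilizer of $(H,gK)$ is exactly $H\cap gKg^{-1}$ — since the interchange of summations is routine (all the measures involved are non-negative and all index sets are countable) and the degenerate case of a trivial intersection is absorbed by the standing conventions $\eta_{\{\id\}}=0$ and $\gd_{\emptyset}=0$ on $\C_N$.
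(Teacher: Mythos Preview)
Your argument is correct and follows essentially the same route as the paper: decompose the sum over $F_N/H\times F_N/K$ into $F_N$-orbits for the diagonal action, identify these orbits with double cosets $H\backslash F_N/K$, and recognize each orbit sum as a counting current via Proposition~\ref{gH}. The only cosmetic difference is that the paper works with a general representative $(g_1H,g_2K)$ and the notation $H^{g}=gHg^{-1}$, whereas you normalize to $(H,gK)$; your explicit handling of the degenerate case $L_g=\{\id\}$ and of the finiteness needed for the ``Hence'' part is a small clarification beyond what the paper writes out.
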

\begin{proof}
Let $H,K\in \Sub (F_N)$. We denote by $F_N\backslash ( F_N/H\times F_N/K) $ the quotient set of $F_N/H\times F_N/K$ by the natural diagonal action of $F_N$. Then, we have the bijective map:
\[ F_N\backslash ( F_N/H\times F_N/K)\to H\backslash F_N /K;\ [g_1H,g_2K]\mapsto Hg_1^{-1}g_2K.\]
Denote $H^g$ to be $gHg^{-1}$ for $g\in F_N$. Then for each $[g_1H,g_2K]\in F_N\backslash ( F_N/H\times F_N/K)$ with fixed $g_1$ and $g_2$ we have a bijective map:
\[ [g_1H,g_2K]\to F_N/(H^{g_1}\cap K^{g_2});\ (gg_1H,gg_2K)\mapsto g(H^{g_1}\cap K^{g_2}).\]
For $(g_1'H,g_2'K)\in [g_1H,g_2K]$ we can see that $H^{g_1'}\cap K^{g_2'}$ is conjugate to $H^{g_1}\cap K^{g_2}$. Therefore $\eta_{H^{g_1}\cap K^{g_2}}$ does not depend on the choice of $g_1$ and $g_2$. Consequently,
\begin{align*}
&\widehat{I}(\eta_H ,\eta _K ) \\
	=&\sum_{(g_1H,g_2K)\in F_N/H\times F_N/K} \gd_{\Lambda (H^{g_1}\cap K^{g_2})} \\
	=&\sum_{[g_1H,g_2K]\in F_N\backslash (F_N/H\times F_N/K)} \sum_{(gg_1H,gg_2K)\in [g_1H,g_2K]}\gd_{ g\Lambda
		(H^{g_1}\cap K^{g_2})}\\
	=&\sum_{[g_1H,g_2K]\in F_N\backslash (F_N/H\times F_N/K)} \sum_{g(H^{g_1}\cap K^{g_2})\in
	F_N/(H^{g_1}\cap K^{g_2})}\gd_{ g\Lambda ( H^{g_1}\cap K^{g_2})}\\
	=&\sum_{[g_1H,g_2K]\in F_N\backslash (F_N/H\times F_N/K)} \eta_{ H^{g_1}\cap K^{g_2}}\\
	=&\sum_{HgK\in H\backslash F_N /K} \eta_{H\cap gKg^{-1}},
\end{align*}
as required.
\end{proof}

Note that from Proposition \ref{prop:not conti}, the map $\widehat{I}$ is not continuous.
However, we can establish the following theorem. From Theorem \ref{thm:int} and Theorem \ref{thm:rank int} we can think of $\widehat{I}$ as a generalization of the construction of the fiber product graph, which we considered in the beginning of Section \ref{sec:product}. One of the points of $\widehat{I}$ is that we do not use the Cayley graph $X$ in the definition of $\widehat{I}$.

\begin{theorem}\label{thm:rank int}
The following equality holds:
\[ \rk \circ \widehat{I} =\N.\]
\end{theorem}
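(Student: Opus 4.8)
The plan is to establish the identity $\rk \circ \widehat{I} = \N$ as equality of two continuous $\RRR$-bilinear functionals on $\SCurr(F_N) \times \SCurr(F_N)$. Since the rational subset currents form a dense subset of $\SCurr(F_N)$ (and hence the products $r\eta_H \times s\eta_K$ are dense in the product space), it suffices by bilinearity and continuity of both sides to verify the equality on pairs $(\eta_H, \eta_K)$ with $H, K \in \Sub(F_N)$. The key input is Theorem \ref{thm:int}, which already gives us the explicit formula
\[
\widehat{I}(\eta_H, \eta_K) = \sum_{HgK \in H\backslash F_N/K} \eta_{H \cap gKg^{-1}}.
\]

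From here the argument is essentially bookkeeping. First I would apply the reduced rank functional $\rk$ to both sides. Because $\rk$ is $\RRR$-linear and continuous (Theorem \ref{thm:reduced rank}), and because the sum on the right is in fact finite — only finitely many double cosets $HgK$ give $H \cap gKg^{-1} \ne \{\id\}$, as noted in the introduction — we may move $\rk$ inside the sum:
\[
\rk\bigl(\widehat{I}(\eta_H, \eta_K)\bigr) = \rk\Bigl(\sum_{HgK \in H\backslash F_N/K} \eta_{H \cap gKg^{-1}}\Bigr) = \sum_{HgK \in H\backslash F_N/K} \rk(\eta_{H \cap gKg^{-1}}) = \sum_{HgK \in H\backslash F_N/K} \rk(H \cap gKg^{-1}).
\]
By the very definition of the product $\N(H,K)$, the last expression equals $\N(H,K) = \N(\eta_H, \eta_K)$, which is exactly the content of the last line of Theorem \ref{thm:int}. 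This establishes $\rk \circ \widehat{I}$ and $\N$ agree on all pairs of rational subset currents.

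The final step is to upgrade from rational currents to all of $\SCurr(F_N)$. Here I would invoke that $\N$ is continuous and $\RRR$-bilinear (Theorem \ref{thm:Neumann product}), and that $\rk \circ \widehat{I}$ is also continuous and $\RRR$-bilinear: $\widehat{I}$ is $\RRR$-bilinear by construction, $\rk$ is $\RRR$-linear, and — although $\widehat{I}$ itself is \emph{not} continuous (Proposition \ref{prop:not conti}) — the composition $\rk \circ \widehat{I}$ \emph{is} continuous, because it coincides on the dense set of rational currents with the continuous functional $\N$, and a continuous function into a Hausdorff space agreeing with $\rk \circ \widehat{I}$ pins down the latter. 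More carefully: two continuous functionals that agree on a dense subset are equal, so once we know $\rk \circ \widehat{I}$ is continuous, we are done; and its continuity follows from the identity $\rk \circ \widehat{I} = \widehat{E} - \widehat{V} + \widehat{c}$ obtained by combining the rational-current computation above with the constructions of $\widehat{E}$, $\widehat{V}$, $\widehat{c}$ in Section \ref{sec:product}, all of which are continuous. The main (and only mild) obstacle is the logical care needed at this last step: one must not assume continuity of $\rk \circ \widehat{I}$ a priori but instead deduce it, either via the explicit formula $\widehat{E} - \widehat{V} + \widehat{c}$ or by observing that $\N$ extends the common value on the dense subset continuously and then checking directly that $\rk \circ \widehat{I}$ is forced to equal this continuous extension.
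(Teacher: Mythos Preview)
Your argument has a genuine gap at the extension step, and it is not ``mild.'' You correctly observe that Theorem~\ref{thm:int} gives $\rk\circ\widehat{I}(\eta_H,\eta_K)=\N(\eta_H,\eta_K)$ for all $H,K\in\Sub(F_N)$, but the passage from this to equality on all of $\SCurr(F_N)\times\SCurr(F_N)$ is circular as written. Agreement of two functionals on a dense subset implies equality everywhere only when \emph{both} are continuous; knowing that $\N$ is continuous tells you nothing about the values of $\rk\circ\widehat{I}$ off the dense set. Your two proposed remedies both fail: the ``identity $\rk\circ\widehat{I}=\widehat{E}-\widehat{V}+\widehat{c}$'' is, from your argument, established only on rational pairs, so invoking it to prove continuity assumes the conclusion; and the claim that $\rk\circ\widehat{I}$ is ``forced to equal'' the continuous extension $\N$ is simply false in general for a function not already known to be continuous. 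Since $\widehat{I}$ is genuinely discontinuous (Proposition~\ref{prop:not conti}), there is no soft reason to expect $\rk\circ\widehat{I}$ to be continuous a priori.

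The paper's proof avoids density entirely and instead computes $\rk\circ\widehat{I}(\mu,\nu)$ directly for arbitrary $\mu,\nu\in\SCurr(F_N)$. Writing $\rk\circ\widehat{I}(\mu,\nu)=E(\widehat{I}(\mu,\nu))-V(\widehat{I}(\mu,\nu))$ and analyzing the preimages $I^{-1}(\SCyl(e_a))$ and $I^{-1}\bigl(\bigsqcup_{T\in\R_1}\SCyl(T)\bigr)$ as subsets of $\C_N\times\C_N$, it decomposes the expression into four pieces $A_1-A_2-A_3+A_4$, identifies $A_1=\widehat{E}(\mu,\nu)$ and $A_2=\widehat{V}(\mu,\nu)$, and then via a careful set-theoretic calculation (using $F_N$-invariance of $\mu\times\nu$) shows $-A_3+A_4=\widehat{c}(\mu,\nu)$. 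This establishes $\rk\circ\widehat{I}=\widehat{E}-\widehat{V}+\widehat{c}=\N$ pointwise, with continuity of $\rk\circ\widehat{I}$ as a \emph{consequence} rather than an assumption. That direct computation is the actual content of the theorem; your sketch does not supply it.
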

\begin{proof}
Let $\mu ,\nu \in \SCurr (F_N)$. We prove the above equality by representing $\rk\circ \widehat{I}(\mu, \nu)$ and $\N$ explicitly. Most parts of this proof consist of technical calculations.

First, by the definition of $\rk$ and $\widehat{I}$,
\begin{align*}
\rk \circ \widehat{I} (\mu, \nu) &=E(\hat{I}(\mu ,\nu ))-V(\hat{I}(\mu ,\nu ))\\
&=\sum_{a\in A}\mu\times \nu \Bigl( I^{-1}(\SCyl (e_a))\Bigr) -\mu \times \nu \left( I^{-1}\left (
\bigsqcup_{T\in \R_1}\SCyl (T)\right) \right).
\end{align*}
For any $(S_1,S_2)\in I^{-1}(\SCyl (e_a))$, we have 
\[ e_a \subset \mathrm{Conv}(S_1\cap S_2)\subset \mathrm{Conv}(S_1)\cap \mathrm{Conv}(S_2),\]
that is, $e_a\underset{\mathrm{ext}}{\subset} \mathrm{Conv}(S_1)$ and $e_a\underset{\mathrm{ext}}{\subset} \mathrm{Conv}(S_2)$. Consequently, 
\[I^{-1}(\SCyl (e_a))\subset \SCyl (e_a)\times \SCyl (e_a).\]
Similarly, for any $(S_1,S_2)\in I^{-1} (\bigsqcup_{T\in \R_1} \SCyl (T) )$ we have $\id \in \mathrm{Conv}(S_i)\ (i=1,2)$, which implies that
\begin{align*}
I^{-1}\left (\bigsqcup_{T\in \R_1}\SCyl (T)\right) &\subset \{ (S_1,S_2)\in \C \times \C \ |\ \mathrm{Conv}(S_1),\mathrm{Conv}(S_2)\ni \id \} \\
&= \bigsqcup_{T_1,T_2\in \R_1}\SCyl (T_1)\times \SCyl (T_2).
\end{align*}
By using difference sets, we have the following description
\[ \rk \circ \widehat{I} (\mu,\nu )=A_1-A_2-A_3+A_4 \label{eq0},\]
where
\begin{align*}
	A_1&=\sum_{a\in A}\mu \times \nu (\SCyl (e_a)\times \SCyl (e_a)); \\
	A_2&=\sum_{T_1,T_2\in \R_1} \mu \times \nu (\SCyl (T_1)\times \SCyl (T_2));\\
	A_3&=\sum_{a\in A}\mu \times \nu \biggl(\SCyl (e_a)\times \SCyl (e_a)\setminus I^{-1}(\SCyl (e_a))\biggr) ;\\
	A_4&=\mu \times \nu \left( \bigsqcup_{T_1,T_2\in \R_1}\SCyl (T_1)\times \SCyl (T_2)\setminus I^{-1}\left (\bigsqcup_{T\in \R_1}\SCyl (T)\right) \right) .
\end{align*}
From Section \ref{sec:product}, $A_1=\widehat{E}(\mu,\nu )$ and $A_2=\widehat{V}(\mu ,\nu)$. Hence it suffices to show that $-A_3+A_4=\widehat{c}(\mu ,\nu )$.

In the following Step 1 and Step 2, we deform $A_3$ and $A_4$, and in Step 3 we prove that the equation $-A_3+A_4=\widehat{c}(\mu ,\nu )$ holds.
\smallskip

\noindent \textbf{Step 1:} First, we consider $A_3$. Let $(S_1, S_2)\in \SCyl (e_a)\times \SCyl (e_a)$. By the definition of subset cylinders, $(S_1,S_2)$ does not belong to $I^{-1}(\SCyl (e_a))$ if and only if either $S_1\cap S_2=\emptyset $, or $S_1\cap S_2\not= \emptyset $ and $S_1\cap S_2 \subset \tn{Cyl} (e_a) $ or $S_1\cap S_2 \subset \tn{Cyl} ((e_a)^{-1})$, where we endow $e_a$ with the orientation such that $o(e_a)=\id ,t(e_a)=a$, and $(e_a)^{-1}$ is the inverse of $e_a$. In the case that $S_1\cap S_2=\emptyset $, we have
\begin{align*}
&\Bigl(\SCyl (e_a)\times \SCyl (e_a)\setminus I^{-1}(\SCyl (e_a))\Bigr) \cap I^{-1}(\{ \emptyset \} )\\
=&\Bigl(\SCyl (e_a)\times \SCyl (e_a) \Bigr) \cap I^{-1}(\{ \emptyset \} )\\
=&\bigsqcup_{\substack{T\in \Sub (X,\id )\\[1pt]\ T\supset e_a}} \R (T).
\end{align*}
In the case that $S_1\cap S_2\not=\emptyset $, for $a\in A\cup A^{-1}$ we put
\[ U(a):= \Bigl(\SCyl (e_a)\times \SCyl (e_a)\setminus I^{-1}(\SCyl (e_a))\Bigr) \setminus I^{-1}(\{ \emptyset \} )\  \]
where $e_a \ (a\in A^{-1})$ is the oriented edge with origin $\id $ and terminal $a$ in $X$. We will use $U(a)\ (a\in A^{-1})$ in Step 3.
It follows that
\begin{align}
A_3
	=&\sum_{a\in A}\mu \times \nu \biggl(\SCyl (e_a)\times \SCyl (e_a)\setminus I^{-1}(\SCyl (e_a))\biggr) \nonumber \\
	=&\sum_{a\in A}\mu \times \nu \biggl( \Bigr( \SCyl (e_a)\times \SCyl (e_a)\setminus I^{-1}(\SCyl (e_a))\Bigr)
\cap I^{-1}(\{ \emptyset \} ) \biggr) \nonumber \\
	& +\sum_{a\in A}\mu \times \nu (U(a)) \nonumber \\
	=&\sum_{a\in A}\sum_{\substack{T\in \Sub (X,\id )\\[1pt]\ T\supset e_a}} \mu \times \nu (\R (T)) +\sum_{a\in A}\mu \times \nu (U(a))\nonumber \\
	=&\sum_{[T]\in \SSub (X ,\id )/F_N} \# E_{\tn{top}}(T)\mu \times \nu (\R (T)) +\sum_{a\in A}\mu \times \nu (U(a)). \label{eq5}
\end{align}

\noindent \textbf{Step 2:} Next, we consider $A_4$ in a similar way. First, we have
\begin{align*}
 & \left( \bigsqcup_{T_1,T_2\in \R_1} \SCyl (T_1)\times \SCyl (T_2)\setminus I^{-1}\left (\bigsqcup_{T\in \R_1}\SCyl (T)\right) \right) \cap I^{-1}(\{ \emptyset \} ) \\
=& \left( \bigsqcup_{T_1,T_2\in \R_1} \SCyl (T_1)\times \SCyl (T_2) \right) \cap I^{-1}(\{ \emptyset \} ) \\ 
=&\bigsqcup_{T\in \SSub (X,\id ) } \R (T).
\end{align*}
Put 
\[ U:=\left( \bigsqcup_{T_1,T_2\in \R_1} \SCyl (T_1)\times \SCyl (T_2)\setminus I^{-1}\left (\bigsqcup_{T\in \R_1}\SCyl (T)\right) \right) \setminus I^{-1}(\{ \emptyset \} ).\]
Then,
\begin{align}
A_4
	&=\mu \times \nu \left( \bigsqcup_{T\in \SSub (X,\id ) } \R (T) \right) +\mu \times \nu (U) \nonumber \\
	&=\sum_{[T]\in \SSub (X,\id )/F_N} \# V(T)\mu \times \nu (\R (T)) +\mu \times \nu (U). \label{eq6}
\end{align}

\noindent \textbf{Step 3:} From Eqs. (\ref{eq5}) and (\ref{eq6}), we obtain
\begin{align*}
 \rk \circ \widehat{I} (\mu,\nu ) 
	=&\widehat{E}(\mu ,\nu ) -\widehat {V}(\mu ,\nu ) \\
	&-\sum_{[T]\in \SSub (X ,\id )/F_N} \# E_{\tn{top}}(T)\mu \times \nu (\R (T)) -\sum_{a\in A}\mu \times \nu (U(a))\\
	&+\sum_{[T]\in \SSub (X,\id )/F_N} \# V(T)\mu \times \nu (\R (T)) +\mu \times \nu (U).
\end{align*}
Since for any $T\in \SSub (X,\id )$ we have $\# V(T)-\# E(T)=\chi (T)=1$, 
\begin{align*}
\rk \circ \widehat{I} (\mu ,\nu )
	=&\widehat{E}(\mu ,\nu ) -\widehat {V}(\mu ,\nu ) \\
	& +\sum_{[T]\in \SSub (X,\id )/F_N} \mu \times \nu (\R (T)) +\mu \times \nu (U)-\sum_{a\in A}\mu \times \nu (U(a))\\
	=&\widehat{E}(\mu ,\nu ) -\widehat {V}(\mu ,\nu ) \\
	& +\widehat{c}(\mu ,\nu ) +\mu \times \nu (U)-\sum_{a\in A}\mu \times \nu (U(a))\\
	=&\N (\mu ,\nu ) +\mu \times \nu (U)-\sum_{a\in A}\mu \times \nu (U(a)).
\end{align*}

Now, we show that $\mu \times \nu (U)=\sum_{a\in A}\mu \times \nu (U(a))$.
By the definition of $U(a)\ (a\in A)$, for any $(S_1,S_2)\in U(a)$ we have $S_1\cap S_2 \not=\emptyset $ and either $S_1\cap S_2\subset \tn{Cyl }(e_a)$ or $S_1\cap S_2 \subset \tn{Cyl} ((e_a)^{-1})$.
For $e\in E(X)$ we set
\[ \mathcal{H} (e):=\{ S\subset \partial X \ |\ S\ \tn{is closed and }S\subset \tn{Cyl} (e)\}.\]
Then
\[ U(a)=\biggl( U(a)\cap I^{-1}(\mathcal{H}(e_a)) \biggr) \sqcup \biggl( U(a)\cap I^{-1}(\mathcal{H}((e_a)^{-1})) \biggr) \]
and
\begin{align*}
a^{-1} U(a)=&a^{-1} \biggl( \Bigl(\SCyl (e_a)\times \SCyl (e_a)\setminus   I^{-1}(\SCyl (e_a))  \Bigr) \setminus I^{-1}(\{ \emptyset \} ) \biggr) \\
	=&\Bigl(\SCyl (e_{a^{-1}})\times \SCyl (e_{a^{-1}})\setminus I^{-1}(\SCyl (e_{a^{-1}}))  \Bigr) \setminus I^{-1}(\{ \emptyset \} ) \\
	=&U(a^{-1}).
\end{align*}
In addition, since for $a\in A$ we have $a^{-1}(e_a)^{-1}=e_{a^{-1}}$, the following equality holds
\[ a^{-1} I^{-1}\bigl(\mathcal{H} ((e_a)^{-1})\bigr) = I^{-1}\bigl( \mathcal{H} (e_{a^{-1}})\bigr), \]
and so
\[ a^{-1}\biggl( U(a)\cap I^{-1}(\mathcal{H}((e_a)^{-1})) \biggr) =U(a^{-1})\cap I^{-1}(\mathcal{H}(e_{a^{-1}})) . \]
Recall that $\mu \times \nu$ is $F_N$-invariant with respect to the diagonal action of $F_N$ on $\C_N \times \C_N$. Then we obtain
\begin{align*}
&\sum_{a\in A} \mu \times \nu (U(a))\\
=&\sum_{a\in A}\left\{ \mu \times \nu \biggl( U(a)\cap I^{-1}(\mathcal{H}(e_a)) \biggr) +\mu \times \nu \biggl( U(a)\cap I^{-1}(\mathcal{H}((e_a)^{-1})) \biggr) \right\} \\
=&\sum_{a\in A}\left\{ \mu \times \nu \biggl( U(a)\cap I^{-1}(\mathcal{H}(e_a)) \biggr) +\mu \times \nu \biggl( U(a^{-1})\cap I^{-1}(\mathcal{H}(e_{a^{-1}})) \biggr) \right\} \\
=&\sum_{a\in A\cup A^{-1}}\mu \times \nu \biggl( U(a)\cap I^{-1}(\mathcal{H}(e_a)) \biggr) \\
=&\mu \times \nu \left( \bigsqcup_{a\in A\cup A^{-1}} U(a)\cap I^{-1}(\mathcal{H}(e_a))  \right) ,
\end{align*}
where we note that if $a, a'\in A\cup A^{-1}$ and $a\not=a'$, then $\mathcal{H}(e_a)\cap \mathcal{H}(e_{a'})=\emptyset$.
Now it suffices to show that the following equality holds:
\[ \bigsqcup_{a\in A\cup A^{-1}} U(a)\cap I^{-1}(\mathcal{H}(e_a))=U. \]
The key claim for this equality is that for a given $S\in \C_N$, $\mathrm{Conv}(S)\not\ni \id $ if and only if there exists $a\in A\cup A^{-1}$ such that $S\subset \tn{Cyl}(e_a)$. 

Take $(S_1,S_2)\in U(a)\cap I^{-1}(\mathcal{H}(e_a))\ (a\in A\cup A^{-1})$. Since $\mathrm{Conv}(S_i)\supset e_a$, there exists $T_i\in \R_1$ such that $S_i\in \SCyl (T_i)\ (i=1,2)$. In addition, $S_1\cap S_2\not=\emptyset $ by the definition of $U(a)$. Also, $S_1\cap S_2\subset \tn{Cyl}(e_a)$ implies that $\mathrm{Conv}(S_1\cap S_2)\not\ni \id $, namely $S_1\cap S_2\not\in \bigsqcup_{T\in \R_1} \SCyl (T)$. Therefore $S_1\cap S_2 \in U$.

Take $(S_1,S_2)\in U$. Then $S_1\cap S_2\not=\emptyset $ and $S_1\cap S_2 \not\in \bigsqcup_{T\in \R_1} \SCyl (T)$, and so there exists $a\in A\cup A^{-1}$ such that $S_1\cap S_2\subset \tn{Cyl }(e_a)$. Since $\mathrm{Conv}(S_i)\ni \id$, we have $\mathrm{Conv}(S_i) \supset e_a$. Therefore we obtain $(S_1,S_2)\in U(a)\cap I^{-1}(\mathcal{H}(e_a))$.
\end{proof}

\appendix
\section{}\label{app:1}
The purpose of this appendix is to show that for any Borel subset $U\subset \C_N$ the preimage $I^{-1}(U)$ of the intersection map $I$ is a measurable set of a product measure of two Borel measures on $\C_N$. We prove this in a general setting.
 
The notation in this appendix is different from that in the main text of this paper.

Let $X$ be a compact metrizable space. Then we see that $X$ is second countable. Fix a countable basis $\{ U_n \} _{n=1}^{\infty} $ of $X$.
Let $\C$ be the set of all closed (compact) subsets of $X$. We provide $\C$ with the Vietoris topology, which has the sub-basis consisting of all sets of the forms
\[ [U]_{\subset}:=\{ K\in \C\ |\ K\subset U\} \]
and 
\[ [U]_{\not= \emptyset }:=\{ K\in \C\ |\ K\cap U\not= \emptyset \} , \]
where $U$ is an open subset of $X$. See \cite{Kec95} for details of the Vietoris topology.
Since $[\emptyset ]_{\subset}=\{ \emptyset \}$, $\{ \emptyset \}$ is an isolated point of $\C$. The topology of the subspace $\C \setminus \{  \emptyset  \}$ coincides with the topology induced by the Hausdorff metric when we give a distance on $X$ which is compatible with the topology of $X$. Moreover, we can see that $\C$ is compact. Therefore $\C\setminus \{ \emptyset \}$ is a compact metrizable space, which implies that $\C \setminus \{ \emptyset \}$ is second countable, and so is $\C$. 

Let $O_X$ be the set of all open subsets of $X$ and $\mathcal{O}$ the set of all open subsets of $\C$. 
Then the $\sigma$-algebra $\sigma (\mathcal{O})$ generated by $\mathcal{O}$ is the set of all Borel subsets of $\C$.

Now, we consider the intersection map
\[ I\: \C\times \C\to \C ;\ (K_1,K_2)\mapsto K_1\cap K_2.\]
The goal of this appendix is to prove the following proposition.
\begin{proposition}\label{prop:int}
The intersection map $I$ is a Borel map, which means that for any Borel subset $S\subset \C$ the preimage $I^{-1}(S)$ is a Borel subset of $\C \times \C$.
\end{proposition}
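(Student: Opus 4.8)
The plan is to use the standard criterion that a map into a measurable space is Borel once the preimages of a generating family of the target $\sigma$-algebra are Borel; indeed $\mathcal{M}:=\{ S\subset\C\ |\ I^{-1}(S)\in\sigma(\mathcal{O})\}$ is a $\sigma$-algebra on $\C$, so it is enough to show that $\mathcal{M}$ contains such a generating family. Since $\C$ is second countable, its Borel $\sigma$-algebra $\sigma(\mathcal{O})$ is generated by the Vietoris sub-basis, i.e.\ by the sets $[U]_{\subset}$ and $[U]_{\not=\emptyset}$ with $U\in O_X$. Hence it suffices to prove that $I^{-1}([U]_{\subset})$ and $I^{-1}([U]_{\not=\emptyset})$ are Borel subsets of $\C\times\C$ for every open $U\subset X$.

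The technical core --- and the only place where compactness of $X$ is used --- is the following claim: \emph{for every closed subset $F\subset X$ the set}
\[ \Sigma_F:=\{ (K_1,K_2)\in\C\times\C\ |\ K_1\cap K_2\cap F\not=\emptyset\} \]
\emph{is closed in $\C\times\C$.} I would prove this sequentially, which is legitimate because all the spaces here are metrizable. Let $(K_1^{(n)},K_2^{(n)})\to(K_1,K_2)$ in $\C\times\C$ with $x_n\in K_1^{(n)}\cap K_2^{(n)}\cap F$ for every $n$. Compactness of $X$ yields a subsequence $x_{n_k}\to x$, and $x\in F$ since $F$ is closed. As $x_{n_k}\in K_i^{(n_k)}$, the limit $K_i$ cannot equal $\emptyset$ (an isolated point of $\C$), so $K_i^{(n_k)}\to K_i$ in $\C\setminus\{\emptyset\}$ with respect to the Hausdorff metric; combined with $x_{n_k}\to x$ this forces $x\in K_i$ for $i=1,2$. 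Hence $x\in K_1\cap K_2\cap F$, i.e.\ $(K_1,K_2)\in\Sigma_F$, proving the claim.

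Granting the claim, the two required cases follow immediately. Since $K_1\cap K_2\subset U$ is equivalent to $K_1\cap K_2\cap(X\setminus U)=\emptyset$, we get
\[ I^{-1}([U]_{\subset})=(\C\times\C)\setminus\Sigma_{X\setminus U}, \]
which is open, in particular Borel. For $[U]_{\not=\emptyset}$, fix a metric $d$ compatible with the topology of $X$ and put $W_m:=\{ x\in X\ |\ d(x,X\setminus U)>1/m\}$ for $m\geq1$ (with the convention $d(x,\emptyset):=+\infty$, so that $W_m=X$ when $U=X$). Then each $W_m$ is open, $\overline{W_m}\subset U$, and $\bigcup_{m\geq1}W_m=U$. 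A subset of $X$ meets the open set $U$ if and only if it meets one of the closed sets $\overline{W_m}$; applying this to $K_1\cap K_2$ yields
\[ I^{-1}([U]_{\not=\emptyset})=\bigcup_{m\geq1}\Sigma_{\overline{W_m}}, \]
which is an $F_\sigma$ subset of $\C\times\C$, hence Borel. Together with the reduction in the first paragraph, this proves the proposition.

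The main obstacle is the closedness of $\Sigma_F$: the remainder is formal bookkeeping with $\sigma$-algebras and elementary metric-space topology, whereas that claim is where compactness of $X$, the (upper-semicontinuity) behaviour of Hausdorff convergence, and the degenerate value $\emptyset\in\C$ all have to be handled with some care. Note also that $I$ is genuinely discontinuous --- for instance $\{0,2\}\cap\{1/n,2\}=\{2\}$ does not converge to $\{0,2\}=\{0,2\}\cap\{0,2\}$ --- so a direct continuity argument is impossible and this Borel-measurability route is essentially forced.
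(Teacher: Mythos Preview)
Your proof is correct and takes a genuinely different route from the paper's. The paper first replaces the sub-basic generators $[U]_{\subset}$, $[U]_{\ne}$ by the family $\{[A]_{\ne}\mid A\subset X\text{ compact}\}$ (via two preparatory lemmas) and then, for each compact $A$, writes down an explicit formula
\[
I^{-1}([A]_{\ne})=\bigcap_{k\geq 1}\bigcup_{n\geq 1}\bigl[A\cap B(p_n,\tfrac{1}{k})\bigr]_{\ne}\times\bigl[A\cap B(p_n,\tfrac{1}{k})\bigr]_{\ne},
\]
verified by a direct $\varepsilon$-argument with the metric $d$. You instead prove the single topological lemma that $\Sigma_F=I^{-1}([F]_{\ne})$ is \emph{closed} in $\C\times\C$ for every closed $F$, via sequential compactness and Hausdorff convergence, and then read off that $I^{-1}([U]_{\subset})$ is open and $I^{-1}([U]_{\ne})$ is $F_\sigma$. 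Your approach is more conceptual and in fact yields sharper topological information (closed and $F_\sigma$ rather than merely Borel); the paper's approach is completely explicit and avoids invoking properties of Hausdorff convergence. One tiny notational slip: in your definition of $\mathcal{M}$ you wrote $I^{-1}(S)\in\sigma(\mathcal{O})$, but $\sigma(\mathcal{O})$ denotes the Borel $\sigma$-algebra of $\C$, not of $\C\times\C$; the intended meaning is clear from the rest of your argument.
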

Note that a measurable set of a product measure of two Borel measures on $\C$ is an element of the $\sigma$-algebra generated by the set
\[ \sigma (\mathcal{O})\times \sigma (\mathcal{O}):=\{ U_1\times U_2 \ |\ U_1,U_2\in \sigma (\mathcal{O}) \}. \]
Since $\C$ is a second countable space, the $\sigma$-algebra generated by $\sigma (\mathcal{O})\times \sigma (\mathcal{O})$ coincides with the $\sigma$-algebra generated by the set of all open subsets of $\C \times \C$.

To prove the above proposition we prepare a ``good'' generating set of $\sigma (\mathcal{O})$ as a $\sigma$-algebra, and it suffices to show that $I^{-1}(U)$ is a Borel subset of $\C \times \C$ for $U$ belonging to the ``good'' generating set of $\sigma (\mathcal{O})$.
First, since $\C$ is a second countable space, the sub-basis
\[ \{ [U]_{\subset } \ |\ U\in O_X \} \cup \{ [U]_{\ne}\ |\ U\in O_X \} \]
is a generating set of $\sigma (\mathcal{O})$. 

From now on, we assume that the countable basis $\{ U_n \} _{n=1}^{\infty} $ of $X$ is closed under finite union, that is, $\bigcup_{i=1}^k V_i \in \{ U_n\ | \ n=1,2,\dots \}$ for any $V_1,\dots, V_k \in \{ U_n\ | \ n=1,2,\dots \}$.

\begin{lemma}\label{lem:int1}
The set $\{ [U]_{\ne}\ |\ U\in O_X \}$ is a generating set of $\sigma (\mathcal{O})$.
\end{lemma}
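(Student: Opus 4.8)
The plan is to show that the two sub-basis families $\{[U]_{\subset}\mid U\in O_X\}$ and $\{[U]_{\ne}\mid U\in O_X\}$ generate the same $\sigma$-algebra, and since we already know their union generates $\sigma(\mathcal O)$, it suffices to express each $[U]_{\subset}$ as a countable Boolean combination of sets of the form $[V]_{\ne}$. First I would record the elementary complementation identity: for a closed subset $C\subset X$ and its open complement $U=X\setminus C$, a closed set $K$ satisfies $K\subset U$ if and only if $K\cap C=\emptyset$, i.e. $K\notin[C]_{\ne}$. So $[U]_{\subset}=\C\setminus[X\setminus U]_{\ne}$ — but the obstacle is that $X\setminus U$ is closed, not open, so this is not yet a set from the family $\{[V]_{\ne}\mid V\in O_X\}$.

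The key step is therefore to write the \emph{closed} set $X\setminus U$ as a countable union of basic open sets from inside. Because $X$ is compact metrizable and $\{U_n\}$ is a countable basis closed under finite union, I would choose a compatible metric $d$ and, for the closed set $C=X\setminus U$, set $C_k:=\{x\in X\mid d(x,C)<1/k\}$; each $C_k$ is open, $C_k\supset C$, and $\bigcap_k C_k=C$ since $C$ is closed. Each $C_k$ being open is a union of members of $\{U_n\}$, so by taking an increasing enumeration and using closure under finite union one may assume each $C_k$ is itself a member of the basis, or at least a countable union of basis members. Then for a closed set $K\in\C$ one checks the crucial equivalence
\[
K\cap C\ne\emptyset \iff K\cap C_k\ne\emptyset\ \text{for every }k,
\]
where the forward direction is trivial and the reverse uses compactness of $K$: the sets $K\cap\overline{C_k}$ (or $K\cap C_k$ together with a shrinking argument) form a decreasing sequence of nonempty compact sets, hence have nonempty intersection, which lies in $K\cap C$. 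This yields
\[
[C]_{\ne}=\bigcap_{k=1}^{\infty}[C_k]_{\ne},
\]
and since each $C_k$ is a countable union of basis opens $V$, $[C_k]_{\ne}=\bigcup_V[V]_{\ne}$, so $[C]_{\ne}$ lies in the $\sigma$-algebra generated by $\{[V]_{\ne}\mid V\in O_X\}$.

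Combining the two displays, $[U]_{\subset}=\C\setminus[X\setminus U]_{\ne}$ belongs to the $\sigma$-algebra generated by $\{[V]_{\ne}\mid V\in O_X\}$; hence this family generates all of $\sigma(\mathcal O)$. The main obstacle is the compactness argument establishing $K\cap C\ne\emptyset\iff\forall k\,(K\cap C_k\ne\emptyset)$ — one must be careful to work with the closures $\overline{C_k}$ or otherwise arrange a nested sequence of nonempty \emph{compact} sets so that the finite intersection property applies; the rest is routine manipulation with the metric and the basis. I expect this lemma to be used immediately afterward to reduce the proof of Proposition \ref{prop:int} to checking that $I^{-1}([U]_{\ne})$ is Borel for $U$ ranging over the countable basis, which is a much more tractable task.
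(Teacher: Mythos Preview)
Your proof is correct and follows essentially the same strategy as the paper: reduce to expressing $[U]_{\subset}$ in the $\sigma$-algebra generated by the sets $[V]_{\ne}$, by approximating the closed complement $U^c$ from outside by countably many open sets. The technical execution differs slightly: the paper writes
\[
[U]_{\subset}=\bigcup_{U^c\subset U_n}\bigl([U_n]_{\ne}\bigr)^c,
\]
using compactness of $U^c$ together with the basis $\{U_n\}$ (closed under finite unions) to find, for each $K\subset U$, a single basis element $U_n\supset U^c$ disjoint from $K$; you instead introduce metric neighborhoods $C_k$ of $C=U^c$ and use compactness of $K$ to show $[C]_{\ne}=\bigcap_k[C_k]_{\ne}$. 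These are dual applications of compactness yielding the same countable decomposition; the paper's version avoids choosing a metric and stays entirely within the given basis, which is marginally cleaner, while yours isolates the identity $[C]_{\ne}=\bigcap_k[C_k]_{\ne}$ for closed $C$ as a reusable fact --- indeed the paper's next lemma proves exactly this (for compact $A$) as a separate step toward Proposition~\ref{prop:int}.
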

\begin{proof}
Take any $U\in O_X$. It suffices to show that $[U]_{\subset }$ belongs to the $\sigma$-algebra generated by the above set.
For $K\in \C$ we can see that $K$ belongs to $[U]_{\subset}$ if and only if there exists $U_n$ such that $U^c \subset U_n$ and $K\cap U_n=\emptyset $. The ``if '' part is obvious. We prove the ``only if'' part. For any $p\in U^c$ there exists $U_{n_p}$ such that $p\in U_{n_p}$ and $U_{n_p}\cap K=\emptyset $. Since $U^c$ is compact, there exist $p_1,\dots p_m \in U^c$ such that $U^c \subset \bigcup_{i=1}^m U_{n_{p_i}}$ and $(\bigcup_{i=1}^m U_{n_{p_i}})\cap K=\emptyset $. Hence there exists $U_n$ such that $U^c \subset U_n$ and $U_n\cap K=\emptyset $.
From the above equivalence, we obtain
\[
[U]_{\subset }=\bigcup_{U^c \subset U_n} \{ K\in \C\ |\ K\cap U_n =\emptyset \} \\=\bigcup_{U^c \subset U_n} \left( [U_n]_{\ne } \right) ^c,
\]
as required.
\end{proof}

For a compact subset $A\subset X$, set $[A]_{\ne}:= \{ K\in \C\ |\ K\cap A\not =\emptyset \}$.

\begin{lemma}
The set $\{ [A]_{\ne }\ |\ A\subset X: \tn{compact} \}$ is a generating set of $\sigma (\mathcal{O})$.
\end{lemma}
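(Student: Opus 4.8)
The plan is to reduce to the previous lemma, which already tells us that $\{[U]_{\ne}\ |\ U\in O_X\}$ generates $\sigma(\mathcal{O})$. So it suffices to establish two inclusions between $\sigma$-algebras: writing $\mathcal{A}$ for the $\sigma$-algebra generated by $\{[A]_{\ne}\ |\ A\subset X\ \tn{compact}\}$, I would show first that $\mathcal{A}\subset\sigma(\mathcal{O})$, and then that every $[U]_{\ne}$ with $U$ open belongs to $\mathcal{A}$; together with the previous lemma the latter gives $\sigma(\mathcal{O})\subset\mathcal{A}$.

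For $\mathcal{A}\subset\sigma(\mathcal{O})$, I would fix a metric $d$ on $X$ compatible with its topology and, given a compact $A$, approximate it from outside by the open sets $V_m:=\{x\in X\ |\ d(x,A)<1/m\}$. Since a compact set $K$ disjoint from the compact set $A$ satisfies $d(K,A)>0$, one gets $[A]_{\ne}=\bigcap_{m\geq 1}[V_m]_{\ne}$, a countable intersection of sub-basic open subsets of $\C$; hence $[A]_{\ne}\in\sigma(\mathcal{O})$, and so $\mathcal{A}\subset\sigma(\mathcal{O})$.

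For the reverse direction, given an open $U\subset X$ I would exhaust it from inside by the compact sets $A_m:=\{x\in X\ |\ d(x,X\setminus U)\geq 1/m\}$, which are closed subsets of the compact space $X$ (with the convention $d(\cdot,\emptyset)=\infty$, so $A_m=X$ when $U=X$). Because each $A_m\subset U$ while every point of the open set $U$ lies in $A_m$ for $m$ large, a closed set meets $U$ if and only if it meets some $A_m$; thus $[U]_{\ne}=\bigcup_{m\geq 1}[A_m]_{\ne}\in\mathcal{A}$. Combined with the previous lemma this yields $\sigma(\mathcal{O})\subset\mathcal{A}$, and with the first inclusion we conclude $\mathcal{A}=\sigma(\mathcal{O})$.

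I do not expect a real obstacle here. The only mildly delicate points are the separation argument (one genuinely needs both $K$ and $A$ compact to pass from disjointness to positive distance, hence to $[A]_{\ne}=\bigcap_m[V_m]_{\ne}$) and the fact that an open subset of a compact metric space is $\sigma$-compact, which is what makes the exhaustion $U=\bigcup_m A_m$ work; both are standard point-set topology.
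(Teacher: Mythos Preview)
Your proof is correct and follows essentially the same two-step structure as the paper: first write each $[A]_{\ne}$ as a countable intersection of sets $[U]_{\ne}$ with $U$ open, then write each $[U]_{\ne}$ as a countable union of sets $[A_m]_{\ne}$ with $A_m$ compact. The only cosmetic difference is that the paper uses the fixed countable basis $\{U_n\}$ (closed under finite unions) for the outside approximation $[A]_{\ne}=\bigcap_{A\subset U_n}[U_n]_{\ne}$, whereas you use metric neighborhoods $V_m$; for the inside approximation the paper simply invokes the existence of a $\sigma$-compact exhaustion while you construct one explicitly via the metric.
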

\begin{proof}
First, note that for any compact subset $A\subset X$, we have
\[ [A]_{\ne }=\bigcap_{A\subset U_n}[U_n]_{\ne }.\]
Here there is $U_n$ containing $A$ since $\{ U_n \}$ is closed under a finite union. 
Hence $[A]_{\ne }$ belongs to $\sigma (\mathcal{O})$.
For any $U\in O_X$ by taking a sequence of compact subsets $\{K_n\}$ of $X$ such that $U=\bigcup _{n=1}^\infty K_n$, we have
\[ [U]_{\ne }=\bigcup _{n=1}^\infty [K]_{\ne },\]
as required.
\end{proof}

\begin{proof}[Proof of Proposition \ref{prop:int}]
Take any compact subset $A$ of $X$. We show that the preimage $I^{-1}([A]_{\ne })$ belongs to the $\sigma$-algebra generated by $\sigma (\mathcal{O}) \times \sigma (\mathcal{O})$.

For each positive integer $n$, take $p_n\in U_n$. Since $X$ is a metrizable space, we can define a distance function $d$ on $X$ which is compatible with the topology of $X$. For $x\in X$ and $r\geq 0$ we set
\[ B(x,r):=\{ y\in X \ |\ d(y,x)\leq r\}.\]
Take $(K_1,K_2)\in \C\times \C$. We show that $(K_1,K_2)$ belongs to $I^{-1}([A]_{\ne })$ if and only if for any positive integer $k$ there exists $p_n$ such that 
\[ K_i\cap A \cap B(p_n, \frac{1}{k})\not= \emptyset \quad (i=1,2).\]

First, we prove the ``only if'' part. Since $K_1\cap K_2\cap A\ne $, take $p\in K_1\cap K_2\cap A$ and take a subsequence $\{ p_{j_n}\}$ of $\{ p_n \}$ such that $\{ p_{j_n}\}$ converges to $p$. Then for any positive integer $k$ there exists $p_{j_n}$ satisfying the above condition.

Next, we prove the ``if'' part by contradiction. Assume that $K_1\cap K_2\cap A=\emptyset$. Then there exists a positive integer $k$ such that 
\[ \frac{1}{k}< \frac{1}{2}d(K_1\cap A, K_2\cap A).\]
From the assumption there exists $p_n$ such that
\[ K_i\cap A \cap B(p_n, \frac{1}{k})\not= \emptyset \quad (i=1,2).\] 
Hence, we can take $a_i \in K_i \cap A$ such that $d(a_i ,p_n )\leq 1/k \ (i=1,2)$. Therefore,
\[ d(a_1,a_2)\leq \frac{2}{k}<d(K_1\cap A, K_2\cap A),\]
which leads to a contradiction.

From the above, we have
\[ I^{-1}([A]_{\ne})=\bigcap_{k=1}^\infty \left( \bigcup_{n=1}^{\infty} [ A\cap B(p_n, \frac{1}{k})]_{\ne}\times [ A\cap B(p_n, \frac{1}{k})]_{\ne} \right),\]
as required.
\end{proof}

\subsection*{Acknowledgements.} I am deeply grateful to Prof. Masahiko Kanai who offered continuing support and constant encouragement. I also owe a very important debt to Prof. Katsuhiko Matsuzaki who provided sincere encouragement throughout the production of this study. 

The author also would like to thank the referee for her/his careful reading of the manuscript and valuable comments.

\end{document}